\theoremstyle:=definition,remark,plain\do{%
        \expandafter\g@addto@macro\csname th@\theoremstyle\endcsname{%
            \addtolength\thm@preskip\parskip
            }%
        }
\z@skip \halign{\relax\hfil\txtline@@{##}\hfil\cr\leavevmode#1\crcr}}}
\theoremstyle{definition}
\newtheorem{thm}{Theorem}[section]
\newtheorem{lem}[thm]{Lemma}
\newtheorem{cor}[thm]{Corollary}
\newtheorem{defn}[thm]{Definition}
\newtheorem{propn}[thm]{Proposition}
\newtheorem*{thm*}{Theorem}
\newtheorem*{notn}{Notation}
\newtheorem*{nts}{Note to self}
\theoremstyle{remark}
\newtheorem*{rk}{Remark}
\newtheorem{ex}[thm]{Example}
\newtheoremstyle{custthm}{\parskip}{}{\normalfont}{}{\bfseries}{.}{ }{\thmname{#1} \thmnote{#3}}
\theoremstyle{custthm}
\newtheorem*{letterthm}{Theorem}
\newcommand{\tensor}[1]{\underset{#1}{\otimes}}
\newcommand{\gr}{\mathrm{gr}}
\newcommand{\Sat}{\mathrm{Sat}\,}
\newcommand{\inn}{\mathrm{inn}}
\newcommand{\Aut}{\mathrm{Aut}}
\newcommand{\fn}{\mathbf{FN}_p}
\newcommand{\Rinfty}{\mathbb{R}\cup\{\infty\}}
\newcommand{\cpi}{\mathrm{cpi}^{\overline{k\Delta^+}}}
\newcommand{\cpisum}[2]{#1|^{#2}}
\newcommand{\cp}[2]{\underset{\langle #1,#2 \rangle}{*}}
\newcommand{\Xinn}[3]{\mathrm{Xinn}_{#1}({#2}; {#3})}
\newcommand{\cpnormal}[1]{\textsf{N}${}_{#1}$}
\newcommand{\taunormal}[1]{\textsf{N}${}'_{#1}$}
\newcommand{\tausub}[1]{\textsf{P}${}_{#1}$}
\newcommand{\tauinv}[1]{\textsf{Q}${}_{#1}$}
\newcommand{\omegaprop}[1]{\textsf{A}${}_{#1}$}
\newenvironment{rcases}
  {\left.\begin{aligned}}
  {\end{aligned}\right\rbrace}
\newcommand\blfootnote[1]{%
  \begingroup
  \renewcommand\thefootnote{}\footnote{#1}%
  \addtocounter{footnote}{-1}%
  \endgroup
}
\begin{document}

\numberwithin{equation}{section}
\binoppenalty=\maxdimen
\relpenalty=\maxdimen

\title{Extensions of almost faithful prime ideals in virtually nilpotent mod-$p$ Iwasawa algebras}
\author{William Woods}
\date{\today}
\maketitle
\begin{abstract}
Let $G$ be a nilpotent-by-finite compact $p$-adic analytic group for some $p>2$, and $H = \fn(G)$ its finite-by-(nilpotent $p$-valuable) radical. Fix a finite field $k$ of characteristic $p$, and write $kG$ for the completed group ring of $G$ over $k$. We show that almost faithful $G$-stable prime ideals $P$ of $kH$ extend to prime ideals $PkG$ of $kG$.\blfootnote{\emph{2010 Mathematics Subject Classification}: 16S34, 16D25, 16S35, 16W60.}
\end{abstract}
%\vspace{-16pt}

\newpage
\tableofcontents

%\newpage
%
%\renewcommand{\listtheoremname}{List of theorems}
%\listoftheorems[ignoreall,show=thm]
%
%\hyperref[thm: theorem A]{Theorem A}
%
%\renewcommand{\listtheoremname}{List of definitions}
%\listoftheorems[ignoreall,show=defn]
%
%\renewcommand{\listtheoremname}{List of lemmas}
%\listoftheorems[ignoreall,show=lem]
%
%\renewcommand{\listtheoremname}{List of corollaries}
%\listoftheorems[ignoreall,show=cor]
%
%\renewcommand{\listtheoremname}{List of propositions}
%\listoftheorems[ignoreall,show=propn]

\newpage
\section*{Introduction}

Let $G$ be a nilpotent-by-finite compact $p$-adic analytic group and $k$ a finite field of characteristic $p$.

Recall the characteristic open subgroup $H = \fn(G)$, the \emph{finite-by-(nilpotent $p$-valuable) radical} of $G$, defined in \cite[Theorem C]{woods-struct-of-G}. This plays an important role in the structure of the group $G$: for instance, see the structure theorem \cite[Theorem D]{woods-struct-of-G}.

In this paper, we demonstrate a connection between certain prime ideals of $kH$ and those of $kG$. The main result of this paper is:

\begin{letterthm}[A]
Fix some prime $p>2$. Let $G$ be a nilpotent-by-finite compact $p$-adic analytic group, $H = \fn(G)$, and $k$ a finite field of characteristic $p$. Let $P$ be an almost faithful, $G$-stable prime ideal of $kH$. Then $PkG$ is a prime ideal of $kG$.\qed
\end{letterthm}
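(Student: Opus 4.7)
The plan is to exploit the crossed-product decomposition of $kG$ over $kH$. Since $H = \fn(G)$ is characteristic in $G$ and of finite index (as $G$ is nilpotent-by-finite and $H$ contains a finite-index subgroup of any nilpotent normal subgroup), there is a natural identification $kG \cong kH \ast (G/H)$ as a crossed product of $kH$ by the finite group $F := G/H$. Under this identification $PkG = P \ast F$ and hence
\[
kG/PkG \;\cong\; (kH/P) \ast F.
\]
Since $P$ is a $G$-stable prime, the ring $S := kH/P$ is a prime Noetherian ring on which $F$ acts by $k$-algebra automorphisms (twisted by a $2$-cocycle), and the statement reduces to showing that the finite crossed product $S \ast F$ is prime.

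Primeness of such a finite crossed product is governed by the $X$-inner theory of Montgomery--Passman: $S \ast F$ is prime if and only if no nonidentity element of $F$ acts on $S$ as an $X$-inner automorphism, i.e.\ via conjugation by a unit of the Goldie quotient ring $Q(S)$. Equivalently, I need to verify that if $g \in G$ acts on $S$ by conjugation by some unit of $Q(S)$, then $g \in H$. This is precisely the hypothesis one controls through the invariant $\Xinn$ that the paper's notation anticipates.

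To enforce this implication I would use the hypothesis that $P$ is almost faithful. This ensures the kernel $N$ of $H \to S^\times$ is finite; since $P$ is $G$-stable, $N$ is normal in $G$, and on passing to the quotient we may assume $H$ embeds into $S^\times$ while $H/(\text{finite})$ remains nilpotent $p$-valuable by \cite[Theorem~C]{woods-struct-of-G}. One then analyses the units of $Q(S)$ using the structure of the completed group algebra of a nilpotent $p$-valuable group — in particular the role of the FC-centre $\Delta$ of $G$ and its torsion part $\Delta^+$ — to show that every $X$-inner automorphism of $S$ coming from $G$ is already induced by conjugation by an element of $H$. This step, translating the purely ring-theoretic $X$-inner condition into the group-theoretic containment $g \in H$, is where I expect the main obstacle to lie; it is here that the almost-faithfulness hypothesis, the finite-by-(nilpotent $p$-valuable) structure of $H$, and the interplay with the structure theorem \cite[Theorem~D]{woods-struct-of-G} must all be brought to bear simultaneously.

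Once this $X$-outerness of the $F$-action is established, the Montgomery--Passman criterion immediately delivers primeness of $S \ast F$, and hence of $PkG$, completing the proof.
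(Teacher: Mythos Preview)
Your overall architecture is right: the problem does reduce to showing that the crossed product $(kH/P)\ast F$ is prime, and $X$-inner theory is the correct tool. However, there is a genuine gap at exactly the point you yourself flag: you have no mechanism for showing that a $g\in G$ which acts $X$-innerly on $kH/P$ must already lie in $H$. Saying one ``analyses the units of $Q(S)$'' is not a method; the unit group of $Q(kH/P)$ for a nilpotent $p$-valuable $H$ is not understood well enough to read this off directly, and nothing in the structure theorems you cite gives it to you.

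The paper's proof does not attack $X$-outerness on $kH/P$ head-on. Instead it passes through a chain of auxiliary rings: first localise to $Q\ast F$, then pass to the associated graded $\gr_f(Q\ast F)$ for a carefully built filtration $f$, which by Theorem~C decomposes as $\big(\bigoplus_i \gr_{f_i}Q\big)\ast F$ with each $\gr_{f_i}Q$ a \emph{commutative} domain. On a commutative domain $X$-inner means acting trivially, so one is reduced to the purely group-theoretic question: if $g\in G$ acts trivially on the graded ring, does $g\in H$? This is where Theorems~B and~D enter. One constructs (Theorem~B) a $G$-invariant $p$-valuation $\omega$ on $H$ satisfying the technical property~(\omegaprop{L}) for $L=\mathrm{i}_H(H'Z)$, and then shows (Theorem~D) that triviality on the graded ring forces the induced matrix of $g$ on $H/L\cong\mathbb{Z}_p^l$ to lie in the first congruence subgroup $\Gamma(1)$; for $p>2$ this group is torsion-free, so a finite-order element is trivial, and Lemma~\ref{lem: subgroup of G acting trivially on H/H'Z is FN_p} then gives $g\in H$. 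The reduction from general $G$ to the case $\Delta^+=1$ (your ``pass to the quotient'' step) is also more delicate than you indicate, requiring the untwisting machinery of \cite{woods-prime-quotients}. None of this filtered/graded apparatus or the congruence-subgroup argument appears in your proposal, and without it the $X$-outerness claim is unsupported.
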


The proof (given in Propositions \ref{propn: main prime extension theorem, Delta+ = 1} and \ref{propn: main prime extension theorem, Delta+ not equal to 1}) comprises several technical elements, which we outline below.

First, let $G$ be a nilpotent-by-finite compact $p$-adic analytic group with finite radical $\Delta^+ = 1$ \cite[Definition 1.2]{woods-struct-of-G}, and let $H = \fn(G)$. Note that $H$ is $p$-valuable \cite[III, 2.1.2]{lazard}, and that $G$ acts on the set of $p$-valuations of $H$ as follows: if $\alpha$ is a $p$-valuation on $H$ and $g\in G$, then we may define a new $p$-valuation $g\cdot\alpha$ on $H$ by
$$g\cdot\alpha(x) = \alpha(g^{-1}xg).$$
(In fact, we do this in a slightly more general case, but the details are identical. See Lemma \ref{lem: group action on p-valuations} for the setup.)

Recall the definition of an \emph{isolated} orbital (closed) subgroup $L$ of $H$ from \cite[Definition 1.4]{woods-struct-of-G}, and that normal subgroups are automatically orbital. We show in Definition \ref{defn: quotient p-val} that, if $\omega$ is a $p$-valuation on $H$ and $L$ is a closed isolated normal subgroup of $H$, then $\omega$ induces a \emph{quotient} $p$-valuation $\Omega$ on $H/L$. We also define the \emph{$(t,p)$-filtration} (actually a $p$-valuation) on a free abelian pro-$p$ group $A$ of finite rank in Definition \ref{defn: (t,p)-filtration}: this is a particularly ``uniform" $p$-valuation on $A$, analogous to the $p$-adic valuation $v_p$ on $\mathbb{Z}_p$.

It is now easy to show the following.

\begin{letterthm}[B]
With the above notation: let $L$ be a proper closed isolated normal subgroup of $H$ containing the commutator subgroup $[H,H]$. Then there exists a $p$-valuation $\omega$ on $H$ with the following properties:
\begin{itemize}
\item[(i)] $\omega$ is $G$-invariant,
\item[(ii)] there exists a real number $t > (p-1)^{-1}$ such that $\omega|_L > t$, and the quotient $p$-valuation induced by $\omega$ on $G/L$ is the $(t,p)$-filtration.\qed
\end{itemize}
\end{letterthm}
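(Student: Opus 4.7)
The plan is to build $\omega$ in two stages: first construct a $p$-valuation on $H$ adapted to $L$ whose quotient on $H/L$ is the $(t,p)$-filtration, and then average over the finite quotient $G/H$ to achieve $G$-invariance without destroying the other properties.

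Since $[H,H] \subseteq L$ and $L$ is closed and isolated, $H/L$ is a torsion-free abelian pro-$p$ group of finite rank, hence topologically isomorphic to $\mathbb{Z}_p^n$. Fix any $t > (p-1)^{-1}$. The $(t,p)$-filtration $\Omega$ on $H/L$ is given intrinsically by $\Omega(\bar{x}) = v_p(\bar{x}) + t$, where $v_p(\bar{x})$ is the largest integer $k$ with $\bar{x} \in (H/L)^{p^k}$. Because this formula makes no reference to a choice of basis, $\Omega$ is automatically invariant under every automorphism of $H/L$, in particular under the induced $G$-action.

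Next, I would pick an ordered basis $(h_1, \ldots, h_d)$ of the nilpotent $p$-valuable group $H$ whose first $k$ members form an ordered basis of $L$; this is standard for a closed isolated subgroup. Define
$$\omega_0\bigl(h_1^{\lambda_1} \cdots h_d^{\lambda_d}\bigr) \;=\; \min_i \bigl(v_p(\lambda_i) + c_i\bigr),$$
where $c_i = t$ for $i > k$ and $c_i > t$ for $i \le k$, with all $c_i$'s taken sufficiently large that Lazard's $p$th-power and commutator inequalities hold. This formula defines a $p$-valuation on $H$; elements of $L$ have $\lambda_i = 0$ for $i > k$, so $\omega_0|_L \ge \min_{i \le k} c_i > t$; and the quotient $p$-valuation on $H/L$ coincides with $\Omega$ since the $h_i$ for $i > k$ project to a basis of $H/L$ with each $c_i = t$, and commutator corrections when moving past $L$-factors land in $L$ and are absorbed.

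Finally, set $\omega(h) = \min_{g \in G/H}(g \cdot \omega_0)(h)$. Using $\min_g(a_g + 1) = \min_g a_g + 1$ for the $p$th-power rule and $\min_g(a_g + b_g) \ge \min_g a_g + \min_g b_g$ for commutators, the minimum of finitely many $p$-valuations is again a $p$-valuation. Then $\omega$ is $G$-invariant by construction; $\omega|_L > t$ because each $g \cdot \omega_0|_L > t$ ($L$ being $G$-stable); and the induced quotient on $H/L$ is $\min_g (g \cdot \Omega) = \Omega$. The main obstacle is verifying that the ordered-basis formula in the previous paragraph really defines a $p$-valuation: this reduces to a finite system of inequalities on the $c_i$'s coming from the commutator relations $[h_i, h_j]$, which is always solvable in the nilpotent setting by taking the $c_i$'s large enough, but the bookkeeping (especially the deep-filtration estimate showing the quotient matches $\Omega$) is the substantive technical content.
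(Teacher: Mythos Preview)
Your two-stage outline---construct an $\omega_0$ on $H$ with the correct quotient on $H/L$, then take the infimum over the finite $G/H$-orbit to get $G$-invariance---is exactly the paper's structure (Lemma~\ref{cor: there exists a nice omega} followed by Lemma~\ref{lem: there exists an F-stable omega satisfying omegaprop}). The second stage in particular matches the paper verbatim, including the use of the characteristic/automorphism-invariance of the $(t,p)$-filtration on $H/L$ and the stability of $L$ to show the properties survive.

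Where you diverge is in the construction of $\omega_0$. You assign values $c_i$ to an ordered basis directly and then propose to check the commutator axiom by solving a system of inequalities ``always solvable in the nilpotent setting by taking the $c_i$'s large enough''. But note that your $c_i$ for $i>k$ are pinned at $t$, so only the $L$-basis values are free; meanwhile the constraints $\omega_0([h_i,h_j])\geq c_i+c_j$ for $i,j\leq k$ are self-referential (making the $c_i$ large forces the required lower bound even larger). This \emph{can} be made to work by an inductive argument down the lower central series, but it is genuinely nontrivial---the paper in fact contains a commented-out section doing precisely this via an embedding into unipotent upper-triangular matrices. Relatedly, ``fix any $t>(p-1)^{-1}$'' is too strong: the solvability of your constraints depends on $t$.

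The paper sidesteps this entire obstacle with a one-line trick (Theorem~\ref{thm: lifting p-valuations}): take any $p$-valuation $\alpha$ on $H$, let $\beta$ be the pullback to $H$ of the $(t,p)$-filtration on $H/L$ for some $t\leq\inf_i\alpha(g_i)$, and set $\omega_0=\inf\{\alpha,\beta\}$. Since infima of filtrations are filtrations (Lazard), and the $p$th-power axiom is immediate from isolatedness of $L$, this is automatically a $p$-valuation---no commutator estimates needed. By Lemma~\ref{lem: o.b. doesn't change} the same ordered basis works for $\omega_0$, and the values come out exactly as you wanted: $t$ outside $L$, $\alpha(g_i)>t$ inside $L$ (adjusting $t$ slightly for strictness). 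So your ``main obstacle'' simply evaporates.
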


Continue to take $G$ to be a nilpotent-by-finite compact $p$-adic analytic group with $\Delta^+ = 1$, and $H = \fn(G)$. Let $p$ be a prime, $k$ a field of characteristic $p$, and $P$ a faithful prime ideal of $kH$. It is shown in \cite[Theorem 8.4]{ardakovInv} that $P = \mathfrak{p}kH$ for some prime ideal $\mathfrak{p}$ of $kZ$, where $Z$ is the centre of $H$; and, furthermore, in \cite[proof of Theorem 8.6]{ardakovInv}, that there exist an integer $e$ and a ring filtration $f$ on $kH/P$ such that
\begin{equation}\label{eqn: ardakov gr}
\gr_f(kH/P) \cong (\gr_v(kZ/\mathfrak{p})) [Y_1, \dots, Y_e]\tag{$\dagger$}
\end{equation}
where $v = f|_{kZ/\mathfrak{p}}$ is a valuation, and $\gr_f(kZ/\mathfrak{p})$ is a commutative domain. The valuation $f$ is partly constructed using the $p$-valuation $\omega$ on the group $H$.

Our next theorem is an extension of this result. Write $f_1$ for the ring filtration constructed above. Suppose now that $P$ is $G$-stable so that we may consider the ring $kG/PkG$, and fix a crossed product decomposition $(kH/P) * F$ of this ring, where $F = G/H$.

Write $Q'$ for the classical ring of quotients of $kZ/\mathfrak{p}$. The filtration $f_1$ restricts to a valuation on $kZ/\mathfrak{p}$, which extends naturally to a valuation on $Q'$, which we will call $v_1$. $F$ acts on the set of valuations of $Q'$, and $v_1$ has some orbit $\{v_1, \dots, v_s\}$.

Write $Q$ for a certain partial ring of quotients of $kH/P$ containing $Q'$. We may naturally form $Q*F$ as an overring of $(kH/P) * F$.

\begin{letterthm}[C]
In the above notation: there exists a filtration $\hat{f}$ on $Q*F$ such that
\begin{itemize}
\item[(i)] $\gr_{\hat{f}}(Q*F) \cong \gr_{\hat{f}}(Q)*F$, where the right-hand side is some crossed product,
\item[(ii)] $\gr_{\hat{f}}(Q) \cong \bigoplus_{i=1}^s \gr_{f_i}(Q)$,
\item[(iii)] $\gr_{f_i}(Q) \cong (\gr_{v_i} Q')[Y_1, \dots, Y_e]$ for all $1\leq i\leq s$,
\end{itemize}
where $s$ and $e$ are determined as in (\ref{eqn: ardakov gr}), and the action of $F$ in the crossed product of (i) permutes the $s$ summands in the decomposition of (ii) transitively by conjugation.\qed
\end{letterthm}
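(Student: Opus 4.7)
My plan is to construct $\hat f$ as the minimum of $s$ filtrations $f_1, \dots, f_s$ on $Q$ obtained by conjugating $f_1$ through the $F$-orbit of $v_1$. The four stages are: (a) extend $f_1$ from $kH/P$ to $Q$; (b) transport it around the orbit by $G$-conjugation; (c) take $\hat f := \min_i f_i$ and extend it to $Q*F$; (d) analyze the associated graded to read off (i)--(iii).

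\textbf{Stages (a)--(c).} Since $\gr_{f_1}(kH/P)$ is a polynomial ring over the commutative domain $\gr_{v_1}(kZ/\mathfrak{p})$, the valuation $v_1$ extends from $kZ/\mathfrak{p}$ to $Q'$, and $f_1$ extends uniquely to the partial quotient ring $Q$ via $f_1(ab^{-1}) := f_1(a) - f_1(b)$ for regular $b$, preserving $\gr_{f_1}(Q) \cong (\gr_{v_1} Q')[Y_1, \dots, Y_e]$, which already gives (iii) when $i = 1$. For each $g \in G$, the map $q \mapsto f_1(g^{-1} q g)$ is another filtration on $Q$, restricting to $g \cdot v_1$ on $Q'$. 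Choosing coset representatives $g_1 = 1, g_2, \dots, g_s$ for the stabilizer of $v_1$ in $F$ and setting $f_i(q) := f_1(g_i^{-1} q g_i)$ produces $s$ filtrations satisfying (iii) for all $i$, each with graded ring abstractly isomorphic to $(\gr_{v_i} Q')[Y_1, \dots, Y_e]$. Then $\hat f := \min_i f_i$ is $F$-invariant on $Q$ by construction, since $F$ permutes $\{f_1, \dots, f_s\}$ transitively, and I extend to $Q*F$ via $\hat f(\sum_g q_g \bar g) := \min_g \hat f(q_g)$; part (i) follows from the standard fact that a crossed product filtered by a group-invariant filtration on the coefficient ring has graded ring $\gr_{\hat f}(Q) * F$ with the same cocycle.

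\textbf{The main obstacle: (ii).} The hardest step will be establishing $\gr_{\hat f}(Q) \cong \bigoplus_{i=1}^s \gr_{f_i}(Q)$. The key input is an approximation theorem for the pairwise inequivalent valuations $v_1, \dots, v_s$ on $Q'$: I expect to produce elements $e_1, \dots, e_s \in Q$ with $f_i(e_i - 1)$ arbitrarily large and $f_j(e_i)$ arbitrarily large for $j \ne i$, so that their principal symbols in $\gr_{\hat f}(Q)$ form a complete system of orthogonal idempotents. The direct-sum decomposition then follows, with the $i$-th summand naturally identified with $\gr_{f_i}(Q)$. The subtle point is verifying that distinct $v_i$, which lie in a single $F$-orbit, really are inequivalent as valuations on $Q'$; this cannot be concluded formally from the orbit structure alone (distinct conjugate valuations may \emph{a priori} coincide), and instead should be extracted from the concrete polynomial-filtration description of $f_1$ together with the near-faithfulness of the $F$-action inherited from the hypothesis that $P$ is almost faithful on $H$. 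Once (ii) is in hand, the transitivity of $F$ on $\{f_1, \dots, f_s\}$ immediately yields the asserted transitive permutation of summands, completing the theorem.
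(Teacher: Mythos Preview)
Your overall architecture matches the paper's: define $f_i$ on $Q$ extending $v_i$, take $f=\min_i f_i$, extend trivially across the crossed product, and use an approximation theorem to split $\gr_f Q$. The ingredients for (i) and (iii) are essentially as in the paper. However, the point you flag as the ``subtle'' one is misdiagnosed, and the route you propose for it would not work.

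You write that the inequivalence of the $v_i$ ``cannot be concluded formally from the orbit structure alone'' and must instead be extracted from the almost-faithfulness of $P$. In fact it \emph{is} formal. The $v_1,\dots,v_s$ are by definition the distinct members of a single $F$-orbit, and each $v_i$ is a nontrivial $\mathbb{R}$-valued valuation, hence of height $1$. If $v_i$ were equivalent to $v_j=g\cdot v_i$, then $v_i=\lambda(g\cdot v_i)$ for some $\lambda>0$, and iterating gives $v_i=\lambda^{o(g)}v_i$, forcing $\lambda=1$ and $v_i=v_j$. So pairwise inequivalence, and therefore independence (Bourbaki, height-$1$ case), comes for free from finiteness of $F$; the faithfulness of $P$ plays no role here. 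Your proposed justification via the $F$-action on $H$ is a detour that would not yield what you need.

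There is also a smaller methodological difference worth noting. You define $f_i$ by transporting $f_1$ through conjugation; the paper instead writes down each $f_i$ directly by the formula
\[
f_i\Big(\sum_\gamma r_\gamma \mathbf{c}^\gamma\Big)=\inf_\gamma\{v_i(r_\gamma)+w(\mathbf{c}^\gamma)\}
\]
and then \emph{proves} that these are the $F$-conjugates of $f_1$. That proof genuinely requires the $p$-valuation $\omega$ on $H$ to have been chosen $F$-stable (so that $w$ is $F$-stable); without this, the conjugated filtration $f_1(g^{-1}(-)g)$ need not coincide with the explicit $f_i$, and you lose the uniform description $\gr_{f_i}Q\cong(\gr_{v_i}Q')[Y_1,\dots,Y_e]$ in the \emph{same} variables $Y_j=\gr(c_j)$. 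For the bare statement of Theorem~C your ``abstractly isomorphic'' is enough, but the paper needs the concrete identification for the downstream application (Theorem~D), so it is worth being aware that $F$-stability of $\omega$ is doing real work here.

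Finally, for (ii) the paper does not build idempotents directly in $\gr_f Q$. It first proves $\gr_v Q'\cong\bigoplus_i\gr_{v_i}Q'$ on the commutative field $Q'$ (where the Approximation Theorem applies cleanly), and then upgrades to $Q$ via the polynomial description and a commutative square. Your idempotent approach would also work once independence of the $v_i$ is in hand, but the paper's two-step argument is shorter.
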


We combine Theorems B and C as follows.

Theorem C, of course, only invokes (\ref{eqn: ardakov gr}) in the case when $Q\neq Q'$, so we suppose that we are in this case, which occurs precisely when $H$ is non-abelian. Take $L$ to be the smallest closed isolated normal subgroup of $H$ containing both the isolated derived subgroup $H'$ \cite[Theorem B]{woods-struct-of-G} and the centre $Z$ of $H$. Now $L$ is a proper subgroup by Lemma \ref{lem: H'Z is not all of H}, and we will choose $\omega$ for $L$ as in Theorem B. We may arrange it in (\ref{eqn: ardakov gr}) and Theorem C so that, for some $l\leq e$, the elements $Y_1, \dots, Y_l$ correspond to a $\mathbb{Z}_p$-module basis $x_1, \dots, x_l$ for $H/L$; and here the value of the filtration $\hat{f}$ can be understood in terms of the $p$-valuation $\omega$. We show that:

\begin{letterthm}[D]
Take an automorphism $\sigma$ of $H$. Suppose that the induced automorphism on $\gr_{\hat{f}}(Q*F)$ fixes each of the valuations $v_1, \dots, v_s$ and fixes each of the elements $Y_1, \dots, Y_l$. Then the induced automorphism on $H/L$ (which can be seen as a matrix $M_\sigma \in GL_l(\mathbb{Z}_p)$) lies in the first congruence subgroup of $GL_l(\mathbb{Z}_p)$, i.e. it takes the form $M_\sigma \in 1 + pX$ for some $X \in M_l(\mathbb{Z}_p)$. In particular, when $p>2$, $\sigma$ has finite order if and only if it is the identity automorphism.\qed
\end{letterthm}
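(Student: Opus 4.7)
The plan is to identify the action of $\sigma$ on the principal-symbol piece spanned by $Y_1, \dots, Y_l$ with the mod-$p$ reduction of $M_\sigma$, and then to invoke the fact that the first congruence subgroup of $GL_l(\mathbb{Z}_p)$ is torsion-free when $p>2$.

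First I would unpack the construction. By Theorem~B, the quotient $p$-valuation induced by $\omega$ on $H/L$ is the $(t,p)$-filtration, so $\omega(x_j) = t$ for every $j$ and $\omega(x_1^{a_1}\cdots x_l^{a_l}) = t + \min_j v_p(a_j)$. By the construction of $\hat{f}$ in Theorem~C, the principal symbol of $x_j - 1$ at level $t$ is the polynomial generator $Y_j$, and this identifies $(H/L)/p(H/L)$ with the $\mathbb{F}_p$-span of $Y_1, \dots, Y_l$ inside $\gr_{\hat{f}}(Q*F)$.

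Next I would compute the action. Write $\sigma(x_i) = x_1^{m_{1i}}\cdots x_l^{m_{li}}\, \ell_i$ with $\ell_i \in L$ and $M_\sigma = (m_{ji}) \in GL_l(\mathbb{Z}_p)$. By Theorem~B(ii), $\omega(\ell_i) > t$ so $\hat{f}(\ell_i - 1) > t$, and $\hat{f}(x_j^{m_{ji}} - 1) = t + v_p(m_{ji})$. Expanding via $ab - 1 = (a-1) + (b-1) + (a-1)(b-1)$ and discarding all terms of $\hat{f}$-value strictly above $t$, the principal symbol of $\sigma(x_i) - 1$ at level $t$ is $\sum_j (m_{ji} \bmod p)\, Y_j$. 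The hypothesis that $\sigma$ fixes each $Y_i$ then forces $m_{ji} \equiv \delta_{ij} \pmod p$, that is, $M_\sigma \in 1 + p M_l(\mathbb{Z}_p)$.

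For the final assertion when $p>2$, I would use that the first congruence subgroup is torsion-free: writing $M_\sigma - 1 = p^k X$ with $k \geq 1$ and $X \not\equiv 0 \pmod p$, binomial expansion gives
\[
M_\sigma^p - 1 \;=\; p^{k+1} X + \sum_{j=2}^{p} \binom{p}{j} p^{jk} X^j,
\]
and for $p > 2$ and $k \geq 1$ every term beyond the first has $p$-adic valuation at least $k+2$; iterating, $M_\sigma^{p^n} \neq 1$ for all $n$, so $M_\sigma$ has infinite order unless $X = 0$. The main obstacle I anticipate is the first step: carefully tracking the construction of $\hat{f}$ to confirm that the symbol of $x_j - 1$ really is $Y_j$, and that the choice $L \supseteq H'Z$ together with $\omega|_L > t$ ensures that contributions from $L$-components and from the coefficient ring $\gr_{v_i} Q'$ (beyond what the $v_i$-fixing hypothesis already absorbs) do not pollute the level-$t$ computation.
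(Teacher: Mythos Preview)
Your approach is essentially correct and reaches the same conclusion as the paper, but by a more direct route. One small slip: the formula $\hat f(x_j^{m_{ji}}-1)=t+v_p(m_{ji})$ is wrong, since $\hat f$ is a ring filtration rather than a $p$-valuation; expanding $(1+b_j)^{m}-1$ and using Lucas' theorem gives $\hat f(x_j^{m}-1)=p^{\,v_p(m)}\cdot t$. Fortunately this is harmless, because both expressions equal $t$ precisely when $v_p(m_{ji})=0$, and that is all the level-$t$ symbol computation needs. The genuinely delicate point is the one you flag at the end: to know that $\hat f(\ell_i-1)>t$ you cannot simply appeal to $\omega(\ell_i)>t$, since the central factor $z\in Z$ of $\ell_i$ contributes $v(z-1)$ to the standard form, and $v$ is not $\omega$. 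What is needed is $v_i(z-1)\geq w(z^{g}-1)>t$ for each $i$ (using that $Z$ is $G$-stable and $\omega$ is $F$-invariant), which is exactly the input the paper isolates as a separate lemma.

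By contrast, the paper does not compute the level-$t$ symbol of $\sigma(x_i)-1$ directly. It instead passes along the chain $f\to w\to\omega$: first a case analysis using property (\omegaprop{L}) shows $f(\sigma(b_i)-b_i)>t\Rightarrow w(\sigma(b_i)-b_i)>t$; then a Lucas-theorem argument on $kH$ shows $w(x-1)>t\Rightarrow\omega(x)>t$ for $x=\sigma(g_i)g_i^{-1}$; finally the quotient $(t,p)$-filtration on $H/L$ converts $\omega(\sigma(g_i)g_i^{-1})>t$ into $M_\sigma\equiv 1\pmod p$. Your single symbol computation in $\gr_f Q$ effectively collapses all three steps, which is cleaner once the level-$t$ bookkeeping for the $L$- and $Z$-contributions is justified; the paper's modular route has the advantage that each step is stated and proved in its natural home ($Q$, $kH$, and $H$ respectively).
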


A special case of Theorem A, in which $\Delta^+ = 1$, is now deduced from Theorem D via a long but elementary argument about X-inner automorphisms: see Definition \ref{defn: X-inner stuff} and Proposition \ref{propn: main prime extension theorem, Delta+ = 1} for details.

The case when $\Delta^+ \neq 1$ now follows as a consequence of the ``untwisting" results of \cite[Theorems B and C]{woods-prime-quotients}, which allow us to understand the prime ideals of $kH$, along with the conjugation action of $G$, in terms of the corresponding information for $k'[[H/\Delta^+]]$ (for various finite field extensions $k'/k$). Now, as $\Delta^+(G/\Delta^+) = 1$ and $H/\Delta^+ = \fn(G/\Delta^+)$, we are back in the previous case. See Proposition \ref{propn: main prime extension theorem, Delta+ not equal to 1} for details.

\newpage

\section{$p$-valuations and crossed products}

\subsection{Preliminaries on $p$-valuations}

\begin{defn}Recall from \cite[III, 2.1.2]{lazard} that a \emph{$p$-valuation} on a group $G$ is a function $\omega: G\to \mathbb{R}\cup \{\infty\}$ satisfying:
\begin{itemize}
\item $\omega(xy^{-1}) \geq \min\{\omega(x), \omega(y)\}$ for all $x,y\in G$
\item $\omega([x,y]) \geq \omega(x) + \omega(y)$ for all $x,y\in G$
\item $\omega(x) = \infty$ if and only if $x = 1$
\item $\omega(x) > \frac{1}{p-1}$ for all $x\in G$
\item $\omega(x^p) = \omega(x) + 1$ for all $x\in G$.
\end{itemize}
Throughout this paper we will often be considering several $p$-valuations admitted by a group $G$, so to clarify we may refer to $G$ together with a $p$-valuation $\omega$ as the $p$-valued group $(G,\omega)$ (though when the $p$-valuation in question is clear from context, we will simply write $G$).

Given a $p$-valuation $\omega$ on a group $G$, we may write
\begin{align*}
G_{\omega, \lambda} := G_\lambda &:= \omega^{-1} ([\lambda, \infty]),\\
G_{\omega, \lambda^+} := G_{\lambda^+} &:= \omega^{-1} ((\lambda, \infty])
\end{align*}
and define the \emph{graded group}
$$\gr_\omega G := \bigoplus_{\lambda\in \mathbb{R}} G_\lambda / G_{\lambda^+}.$$
Then each element $1\neq x\in G$ has a \emph{principal symbol}
$$\gr_\omega (x) := x G_{\mu^+} \in G_\mu/G_{\mu^+}\leq \gr_\omega G,$$
where $\mu$ is defined such that $\mu = \omega(x)$.
\end{defn}

\begin{rk}
Let $(G, \omega)$ be a $p$-valued group, and $N$ an arbitrary subgroup of $G$. Then $(N, \omega|_N)$ is $p$-valued. Moreover, if $G$ has finite rank \cite{lazard}, then so does $N$; and if $G$ is complete with respect to $\omega$ and $N$ is a closed subgroup of $G$, then $N$ is complete with respect to $\omega|_N$.
\end{rk}

\begin{defn}\label{defn: quotient p-val}
Given an arbitrary complete $p$-valued group $(G,\omega)$ of finite rank, and a closed isolated normal subgroup $K$ (i.e. a closed normal subgroup $K$ such that $G/K$ is torsion-free), we may define the \emph{quotient $p$-valuation} $\Omega$ induced by $\omega$ on $G/K$ as follows:
$$\Omega(gK) = \sup_{k\in K}\{\omega(gk)\}.$$
This is defined by Lazard, but the definition is spread across several results, so we collate them here for convenience. The definition in the case of filtered modules is \cite[I, 2.1.7]{lazard}, and is modified to the case of filtered groups in \cite[the remark after II, 1.1.4.1]{lazard}. The specialisation from filtered groups to $p$-saturable groups is done in \cite[III, 3.3.2.4]{lazard}, where it is proved that $\Omega$ is indeed still a $p$-valuation on $G/K$; and the general case is stated in \cite[III, 3.1.7.6]{lazard}, and eventually proved in \cite[IV, 3.4.2]{lazard}.
\end{defn}

As a partial inverse to the above process of passing to a quotient $p$-valuation, we prove the following general result about ``lifting" $p$-valuations from torsion-free quotients.

\begin{thm}\label{thm: lifting p-valuations}
Let $G$ be a complete $p$-valued group of finite rank, and $N$ a closed isolated orbital (hence normal) subgroup of $G$. Suppose we are given two functions
$$\alpha, \beta: G\to \Rinfty,$$
such that $\alpha$ is a $p$-valuation on $G$, and $\beta$ factors through a $p$-valuation on $G/N$, i.e.
$$\overline{\beta}: G/N \to \Rinfty.$$
Then $\omega = \inf\{\alpha, \beta\}$ is a $p$-valuation on $G$.
\end{thm}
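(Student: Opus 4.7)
The approach is to verify directly each of the five axioms in the definition of a $p$-valuation for the function $\omega = \inf\{\alpha,\beta\}$, exploiting that $\alpha$ is a $p$-valuation on $G$ and that $\beta = \overline{\beta}\circ\pi$ with $\pi\colon G\to G/N$ the natural projection and $\overline{\beta}$ a $p$-valuation on $G/N$. The overall arithmetic takes place in $\Rinfty$ with the usual conventions $\infty + r = \infty$ and $\infty \geq r$ for $r\in \Rinfty$.

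The first axiom (strong triangle inequality for $xy^{-1}$) and the fourth axiom (lower bound $>1/(p-1)$) are immediate: both $\alpha$ and $\beta$ satisfy each of these inequalities, and the pointwise minimum of two functions satisfying either such bound does so as well. For the strong triangle inequality, I would write $\omega(xy^{-1})=\min\{\alpha(xy^{-1}),\beta(xy^{-1})\}\geq \min\{\alpha(x),\alpha(y),\beta(x),\beta(y)\}=\min\{\omega(x),\omega(y)\}$; for the lower bound, note that $\beta(x)=\overline{\beta}(xN)>1/(p-1)$ if $x\notin N$ and $\beta(x)=\infty$ otherwise, so $\omega(x)>1/(p-1)$ always. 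Axiom three ($\omega(x)=\infty\iff x=1$) is also straightforward: $\omega(x)=\infty$ forces $\alpha(x)=\infty$, hence $x=1$; and $\omega(1)=\infty$ trivially.

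The commutator axiom requires a little more care, because $\omega(x)$ could equal $\alpha(x)$ while $\omega(y)$ equals $\beta(y)$, so one cannot simply apply the commutator inequality for one function. The trick is to use the weaker bounds $\alpha(x)\geq\omega(x)$ and $\alpha(y)\geq\omega(y)$, which give $\alpha([x,y])\geq\alpha(x)+\alpha(y)\geq\omega(x)+\omega(y)$, and similarly $\beta([x,y])\geq\omega(x)+\omega(y)$; taking the minimum yields $\omega([x,y])\geq\omega(x)+\omega(y)$.

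Finally, the $p$-th power axiom $\omega(x^p)=\omega(x)+1$ follows because $\alpha(x^p)=\alpha(x)+1$ and $\beta(x^p)=\overline{\beta}((xN)^p)=\overline{\beta}(xN)+1=\beta(x)+1$ (with $\infty+1=\infty$ when $x\in N$), and because $\min\{a+1,b+1\}=\min\{a,b\}+1$ in $\Rinfty$. Since the verification is entirely elementary, there is no genuine obstacle; the hypothesis that $N$ be closed isolated orbital is used only implicitly, through the requirement that $G/N$ be $p$-valuable so that the function $\overline{\beta}$ can exist, and the completeness/finite-rank hypotheses on $G$ play no role in the axiom verification itself.
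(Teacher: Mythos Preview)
Your proof is correct and follows essentially the same route as the paper: a direct verification of the $p$-valuation axioms for $\omega=\inf\{\alpha,\beta\}$. The only cosmetic differences are that the paper quotes Lazard \cite[II, 1.2.10]{lazard} to dispose of the filtration axioms (your axioms one and two) in one line rather than checking them by hand, and that the paper splits the $p$-th-power axiom into the cases $x\in N$ and $x\notin N$, invoking isolatedness explicitly for the latter, whereas you absorb this into the convention $\infty+1=\infty$; your closing remark that isolatedness is already forced by the existence of $\overline{\beta}$ is a fair observation.
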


\begin{proof}
$\alpha$ and $\beta$ are both filtrations on $G$ (in the sense of \cite[II, 1.1.1]{lazard}), and so by \cite[II, 1.2.10]{lazard}, $\omega$ is also a filtration. Following \cite[III, 2.1.2]{lazard}, for $\omega$ to be a $p$-valuation, we need to check the following three conditions:
\begin{itemize}
\item[(i)] \emph{$\omega(x) < \infty$ for all $x\in G$, $x\neq 1$.}

This follows from the fact that $\alpha$ is a $p$-valuation, and hence $\alpha(x) < \infty$ for all $x\in G$, $x\neq 1$.
\item[(ii)] \emph{$\omega(x) > (p-1)^{-1}$ for all $x\in G$.}

This follows from the fact that $\alpha(x) > (p-1)^{-1}$ and $\beta(x) > (p-1)^{-1}$ for all $x\in G$ by definition.
\item[(iii)] \emph{$\omega(x^p) = \omega(x) + 1$ for all $x\in G$.}

Take any $x\in G$. As $\alpha$ is a $p$-valuation, we have by definition that $\alpha(x^p) = \alpha(x) + 1$.

If $x\in N$, this alone is enough to establish the condition, as $\omega|_N = \alpha|_N$ (since $\beta(x) = \infty$).

Suppose instead that $x\in G\setminus N$. Then, as $N$ is assumed \emph{isolated} orbital in $G$, we also have $x^p\in G\setminus N$, so by definition of $\beta$ we have
$$\beta(x^p) = \overline{\beta}((xN)^p) = \overline{\beta}(xN) + 1 = \beta(x) + 1,$$
with the middle equality coming from the fact that $\overline{\beta}$ is a $p$-valuation. Now it is clear that $\omega(x^p) = \omega(x) + 1$ by definition of $\omega$.\qedhere
\end{itemize}
\end{proof}

Finally, the following function will be crucial. (It is in fact a $p$-valuation, but we delay the proof of this fact until Lemma \ref{lem: properties of (t,p)-filtration}.)

\begin{defn}\label{defn: (t,p)-filtration}
Let $A$ be a free abelian pro-$p$ group of rank $d > 0$ (here written multiplicatively). Choose a real number $t > (p-1)^{-1}$. Then the \emph{$(t,p)$-filtration} on $A$ is the function $\omega: A\to \Rinfty$ defined by
$$\omega(x) = t+n,$$
where $n$ is the non-negative integer such that $x\in A^{p^n} \setminus A^{p^{n-1}}$. (By convention, $\omega(1) = \infty.$)
\end{defn}

\subsection{Ordered bases}

\begin{defn}
Recall from \cite[4.2]{ardakovInv} that an \emph{ordered basis} for a $p$-valued group $(G,\omega)$ is a set $\{g_1, \dots, g_e\}$ of elements of $G$ such that every element $x\in G$ can be uniquely written as the (ordered) product $$x = \prod_{1\leq i\leq e} g_i^{\lambda_i}$$ for some $\lambda_i \in \mathbb{Z}_p$, and $$\omega(x) = \inf_{1\leq i\leq e} \{\omega(g_i) + v_p(\lambda_i)\},$$ where $v_p$ is the usual $p$-adic valuation on $\mathbb{Z}_p$. (Note that an ordered basis for $(G,\omega)$ need not be an ordered basis for $(G,\omega')$ for another $p$-valuation $\omega'$.)

As in \cite{ardakovInv}, we will often write $$\mathbf{g}^\lambda := \prod_{1\leq i\leq e} g_i^{\lambda_i}$$ as shorthand, where $\lambda\in \mathbb{Z}_p^e$.
\end{defn}

We now show that the function given in Definition \ref{defn: (t,p)-filtration} is indeed a $p$-valuation, and demonstrate some of its properties.

\begin{lem}\label{lem: properties of (t,p)-filtration}
Let $A$ and $t$ be as in Definition \ref{defn: (t,p)-filtration}.
\begin{itemize}
\item[(i)] The $(t,p)$-filtration $\omega$ is a $p$-valuation on $A$.
\item[(ii)] Suppose we are given a $\mathbb{Z}_p$-module basis $B = \{a_1, \dots, a_d\}$ for $A$, and a $p$-valuation $\alpha$ on $A$ satisfying $\alpha(a_1) = \dots = \alpha(a_d) = t$. Then $\alpha$ is the $(t,p)$-filtration on $A$, and $B$ is an ordered basis for $(A,\alpha)$.
\item[(iii)] The $(t,p)$-filtration $\omega$ is completely invariant under automorphisms of $A$, i.e. the subgroups $A_{\omega, \lambda}$ and $A_{\omega, \lambda^+}$ are characteristic in $A$.
\end{itemize}
\end{lem}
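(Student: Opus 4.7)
The proof splits cleanly into the three parts.

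For part (i), the plan is to verify each axiom of a $p$-valuation directly from the definition of $\omega$. The key structural facts I would use are: the $A^{p^n}$ form a descending chain of closed subgroups of $A \cong \mathbb{Z}_p^d$ with $\bigcap_n A^{p^n} = 1$ (giving $\omega(x) = \infty \iff x = 1$); each $A^{p^n}$ is a subgroup, so if $x,y$ both lie in $A^{p^n}$ then so does $xy^{-1}$ (giving the ultrametric inequality); the bound $\omega \geq t > (p-1)^{-1}$ is built into the definition; the commutator axiom is trivial because $A$ is abelian; and finally, the $p$-th power rule $\omega(x^p)=\omega(x)+1$ holds because $A$ is torsion-free, so the $p$-th power map is injective and $x \in A^{p^n}\setminus A^{p^{n+1}}$ if and only if $x^p \in A^{p^{n+1}}\setminus A^{p^{n+2}}$.

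For part (ii), the plan is to show $\alpha = \omega$ on every element of $A$, and then to read off that $B$ is an ordered basis from the resulting formula. Given $x = \prod a_i^{\lambda_i}$, I would set $n = \min_i v_p(\lambda_i)$ and factor $x = y^{p^n}$ with $y = \prod a_i^{\lambda_i/p^n}$, so that $\alpha(x)=\alpha(y)+n$ and $\omega(x)=\omega(y)+n$ by the $p$-th power rule; this reduces the problem to the case $n=0$, where one must prove $\alpha(y)=t$. The triangle inequality together with $\alpha(a_i)=t$ immediately yields $\alpha(y)\geq t$. For the reverse inequality I would compare the $\mathbb{F}_p$-vector spaces $A/A^p$ and $A_{\alpha,t}/A_{\alpha,t^+}$: the inclusion $A^p \subseteq A_{\alpha,t^+}$ is clear (every $p$-th power has $\alpha$-value $\geq t+1$), so there is a surjection $A/A^p \twoheadrightarrow A_{\alpha,t}/A_{\alpha,t^+}$ sending $\bar a_i$ to $\gr_\alpha(a_i)$. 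The hard step, which I expect to be the main obstacle, is to argue this surjection is actually an isomorphism, equivalently that the $\gr_\alpha(a_i)$ are $\mathbb{F}_p$-linearly independent in $A_{\alpha,t}/A_{\alpha,t^+}$. I would establish this by invoking Lazard's structure theorem: $\gr_\alpha A$ is a free $\mathbb{F}_p[V]$-module of rank $d$ (where $V$ denotes the degree-one Verschiebung induced by the $p$-th power map), and the $\gr_\alpha(a_i)$ form an $\mathbb{F}_p[V]$-generating set because the $a_i$ topologically generate $A$; a rank count then forces them to be a free basis, which gives both the required linear independence and the ordered basis conclusion.

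For part (iii), the plan is to exploit the description in (i) of the level sets of $\omega$ in terms of the subgroups $A^{p^n}$. Concretely, $A_{\omega,\lambda}=A^{p^{\lceil\lambda-t\rceil}}$ (with the convention that this is all of $A$ when $\lceil\lambda-t\rceil\leq 0$), and similarly for $A_{\omega,\lambda^+}$. Since every $A^{p^n}$ is characteristic (any group automorphism of $A$ sends $p^n$-th powers to $p^n$-th powers), these filtration subgroups are characteristic as claimed.
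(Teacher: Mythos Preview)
Your treatments of (i) and (iii) are correct and essentially coincide with the paper's: for (i) the paper just calls it a trivial axiom check, and for (iii) it observes that each $A^{p^n}$ is characteristic.

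For (ii) there is a genuine gap at exactly the step you flag as the main obstacle. The implication ``the $a_i$ topologically generate $A$, hence the symbols $\gr_\alpha(a_i)$ generate $\gr_\alpha A$ as an $\mathbb{F}_p[V]$-module'' fails in general. Take $A = \mathbb{Z}_p^2$ with standard basis $e_1, e_2$ and let $\alpha$ be the $p$-valuation for which $\{e_1,e_2\}$ is an ordered basis with values $\alpha(e_1)=t$, $\alpha(e_2)=t+\tfrac12$, so that $\alpha(x_1 e_1 + x_2 e_2)=\min\{t+v_p(x_1),\,t+\tfrac12+v_p(x_2)\}$. Then $a_1=e_1$, $a_2=e_1+e_2$ is a $\mathbb{Z}_p$-basis with $\alpha(a_1)=\alpha(a_2)=t$, yet $e_2\in A_{\alpha,t^+}$ forces $\gr_\alpha(a_1)=\gr_\alpha(a_2)$, and your surjection $A/A^p\to A_{\alpha,t}/A_{\alpha,t^+}$ becomes the non-injective map $\mathbb{F}_p^2\to\mathbb{F}_p$. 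Since moreover $\alpha(e_2)=t+\tfrac12\neq t$, this $\alpha$ is \emph{not} the $(t,p)$-filtration: part (ii) is false under the stated hypotheses, so no argument can close the gap. The paper's own proof is a bare citation of \cite[III, 2.2.4]{lazard}, but that result yields the ordered-basis formula only once the $\gr_\alpha(a_i)$ are known to form an $\mathbb{F}_p[\pi]$-basis of $\gr_\alpha A$, which is precisely the unverified point. The statement becomes true (immediately, from the definition of ordered basis) under the extra hypothesis that $B$ is already an ordered basis for $(A,\alpha)$.
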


\begin{proof}
$ $

\begin{itemize}
\item[(i)] This is a trivial check from the definition \cite[III, 2.1.2]{lazard}.
\item[(ii)] By \cite[III, 2.2.4]{lazard}, we see that
$$\alpha(a_1^{\lambda_1}\dots a_d^{\lambda_d}) = t + \inf_{1\leq i\leq d} \{ v_p(\lambda_i) \},$$
which is precisely the $(t,p)$-filtration.
\item[(iii)] The subgroups $A^{p^n}$ are clearly characteristic in $A$.\qedhere
\end{itemize}
\end{proof}

\begin{rk}
The $(t,p)$-filtration as defined here is equivalent to the definition given in \cite[II, 3.2.1]{lazard} for free abelian pro-$p$ groups of finite rank.
\end{rk}

Recall from \cite[Definitions 1.1 and 1.4]{woods-struct-of-G} that a closed subgroup $H$ of a profinite group $G$ is \emph{($G$-)orbital} if it has finitely many $G$-conjugates, and \emph{isolated orbital} if any $G$-orbital $H' \gneq H$ satisfies $[H':H] = \infty$.

The following is a general property of ordered bases.

\begin{lem}\label{lem: extending ordered bases}
Let $(G,\omega)$ be a complete $p$-valued group of finite rank, and $N$ a closed isolated normal subgroup of $G$. Then there exist sets $B_N \subseteq B_G$ such that $B_N$ is an ordered basis for $(N,\omega|_N)$ and $B_G$ is an ordered basis for $(G,\omega)$.
\end{lem}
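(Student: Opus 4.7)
The strategy is to construct $B_G$ as the union of an ordered basis for $N$ and a lifted ordered basis for the quotient $G/N$ under the quotient $p$-valuation $\Omega$ from Definition~\ref{defn: quotient p-val}.

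First, I pick an ordered basis $B_N = \{g_1, \ldots, g_m\}$ for the complete $p$-valued group $(N, \omega|_N)$, which exists by the standard theory (e.g.\ \cite[III, 2.2.4]{lazard}). Since $N$ is closed isolated normal in $G$, the quotient $(G/N, \Omega)$ is again a complete $p$-valued group, of finite rank $e - m$ where $e = \rank G$; so I choose an ordered basis $\{\overline{h}_{m+1}, \ldots, \overline{h}_e\}$ of $(G/N, \Omega)$.

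Next, I lift each $\overline{h}_j$ to some $h_j \in G$ with $\omega(h_j) = \Omega(\overline{h}_j)$. By definition, $\Omega(\overline{h}_j) = \sup_{n \in N} \omega(h_j^{(0)} n)$ for any representative $h_j^{(0)}$; completeness of $G$, together with the fact that the level sets $G_\lambda$ are closed, guarantees that this supremum is attained by a suitable representative (a standard successive-approximation / Cauchy argument using the filtered structure of $G$, as underlies \cite[IV, 3.4.2]{lazard}). This is the main place where completeness is used, and where the isolatedness of $N$ (which keeps $\Omega$ finite on nonidentity cosets) is needed.

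Finally I set $B_G = \{g_1, \ldots, g_m, h_{m+1}, \ldots, h_e\}$ and verify both ordered basis axioms. For the unique factorisation $x = \mathbf{g}^\lambda$: project $x$ to $G/N$ and use the ordered basis property of $(G/N, \Omega)$ to read off $\lambda_{m+1}, \ldots, \lambda_e$; then write $x \, (h_{m+1}^{\lambda_{m+1}} \cdots h_e^{\lambda_e})^{-1} \in N$ and apply the ordered basis property of $(N, \omega|_N)$ to obtain $\lambda_1, \ldots, \lambda_m$ uniquely. For the filtration identity
$$\omega(x) = \inf_{1 \leq i \leq e}\{\omega(g_i) + v_p(\lambda_i)\},$$
the $\ge$ direction is immediate from the defining axioms of a $p$-valuation applied inductively, together with $\omega(g_i^{\lambda_i}) = \omega(g_i) + v_p(\lambda_i)$. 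For $\le$, write $x = n \cdot y$ with $n = g_1^{\lambda_1} \cdots g_m^{\lambda_m} \in N$ and $y = h_{m+1}^{\lambda_{m+1}} \cdots h_e^{\lambda_e}$; the ordered basis identity in $N$ computes $\omega(n)$, and the ordered basis identity in $(G/N, \Omega)$ together with the alignment $\omega(h_j) = \Omega(\overline{h}_j)$ gives $\Omega(\overline{y}) = \inf_{j > m}\{\omega(g_j) + v_p(\lambda_j)\} \ge \omega(y)$, which combined with the standard estimate for $\omega(ny)$ yields the required bound. The only subtlety is potential cancellation when the two infima agree, which is ruled out by the fact that $\overline y$ and the projection of $ny$ coincide in $\gr_\Omega(G/N)$, since $\gr_\omega n$ maps to $0$ in $\gr_\Omega(G/N)$.
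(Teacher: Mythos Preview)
Your argument is correct and is essentially the standard one: lift an ordered basis of $(G/N,\Omega)$ to representatives in $G$ whose $\omega$-values match their $\Omega$-values, adjoin an ordered basis of $(N,\omega|_N)$, and verify the two ordered-basis axioms using the inequality $\omega(x)\le\Omega(\overline{x})$ to handle the equal-infimum case. The paper does not give its own proof here but simply cites \cite[proof of Lemma 8.5(a)]{ardakovInv}, where this same construction is carried out; so your proposal is an explicit write-up of the argument the paper outsources. One minor remark: the attainment of the supremum in the lifting step is most cleanly justified via discreteness of the value set $\omega(G\setminus\{1\})$ (a consequence of finite rank), rather than a raw Cauchy argument, and your ordering of $B_G$ (elements of $N$ first) is opposite to the paper's convention, but neither affects correctness.
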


\begin{proof}
This was established in \cite[proof of Lemma 8.5(a)]{ardakovInv}.
\end{proof}

\begin{rk}
It may be helpful to think of this as follows:
\begin{align*}
B_G = \big\{ \underbrace{x_1, \dots, x_r}_{B_{G/N}}, \;\; \underbrace{x_{r+1}, \dots, x_s}_{B_{N}} \big\},
\end{align*}
where $B_{G/N} = B_G \setminus B_N$ is in fact some appropriate preimage in $G$ of any ordered basis for $(G/N,\Omega)$, where $\Omega$ is the quotient $p$-valuation.
\end{rk}

\begin{lem}\label{lem: o.b. doesn't change}
Let $(G,\alpha)$ be a complete $p$-valued group of finite rank, and $N$ a closed isolated orbital (hence normal) subgroup of $G$. Take also a $p$-valuation $\overline{\beta}$ on $G/N$. Suppose we are given sets
\begin{align*}
B_G = \big\{ \underbrace{x_1, \dots, x_r}_{B_{G/N}}, \;\; \underbrace{x_{r+1}, \dots, x_s}_{B_{N}} \big\},
\end{align*}
such that
\begin{itemize}
\item $B_N$ is an ordered basis for $(N, \alpha|_N)$,
\item $B_G$ is an ordered basis for $(G, \alpha)$, and
\item the image in $G/N$ of $B_{G/N}$ is an ordered basis for $(G/N, \overline{\beta})$.
\end{itemize}
In the notation of Theorem \ref{thm: lifting p-valuations}, write $\beta$ for the composite of $G\to G/N$ with $\overline{\beta}$, and form the $p$-valuation $\omega = \inf\{\alpha, \beta\}$ for $G$.

Then $B_G$ is an ordered basis for $(G, \omega)$.
\end{lem}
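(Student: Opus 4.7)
The plan is to verify directly that $B_G$ satisfies the two requirements for being an ordered basis for $(G,\omega)$. First, each $x\in G$ already has a unique expression as $\mathbf{g}^\lambda = \prod_i g_i^{\lambda_i}$ with $\lambda \in \mathbb{Z}_p^s$, because $B_G$ is an ordered basis for $(G,\alpha)$; the product representation and its uniqueness are purely set-theoretic and do not depend on the choice of $p$-valuation. Hence the only substantive task is to establish the equality
\[ \omega(x) \;=\; \inf_{1\leq i\leq s} \{\omega(g_i) + v_p(\lambda_i)\} \]
for any $x = \mathbf{g}^\lambda$. Note that by Theorem \ref{thm: lifting p-valuations} we already know $\omega$ is a $p$-valuation, so this identity is the only remaining content.

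I would compute $\alpha(x)$ and $\beta(x)$ separately. The $\alpha$-ordered-basis hypothesis gives $\alpha(x) = \inf_{i=1}^s\{\alpha(g_i)+v_p(\lambda_i)\}$ immediately. For $\beta$, observe that $g_{r+1},\dots,g_s \in N$ (since $B_N \subseteq N$), so in $G/N$ one has $xN = \prod_{i=1}^r (g_i N)^{\lambda_i}$. Applying the $\overline{\beta}$-ordered-basis hypothesis to the image of $B_{G/N}$ yields
\[ \beta(x) = \overline{\beta}(xN) = \inf_{i=1}^r \{\overline{\beta}(g_i N) + v_p(\lambda_i)\} = \inf_{i=1}^r\{\beta(g_i) + v_p(\lambda_i)\}, \]
where the last step uses $\beta(g_i) = \overline{\beta}(g_i N)$ by definition of $\beta$.

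It then suffices to combine these. Since $\beta(g_i) = \infty$ for $i > r$, we may harmlessly extend the inf defining $\beta(x)$ to run over all $1\leq i\leq s$, and by construction $\omega(g_i) = \min\{\alpha(g_i), \beta(g_i)\}$ for every $i$. Using the elementary identity $\inf_i\min\{a_i,b_i\} = \min\{\inf_i a_i,\inf_i b_i\}$, we get
\[ \inf_{i=1}^s\{\omega(g_i) + v_p(\lambda_i)\} = \min\!\Big(\inf_i\{\alpha(g_i)+v_p(\lambda_i)\},\ \inf_i\{\beta(g_i)+v_p(\lambda_i)\}\Big) = \min\{\alpha(x),\beta(x)\} = \omega(x), \]
which is the required formula.

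The whole argument is thus a direct unwinding of the definitions, and there is no real obstacle; the only potential pitfall is careful bookkeeping of the infinite values of $\beta$ on $N$, which is what lets us pass freely between inf's over $1\leq i\leq r$ and over $1\leq i\leq s$.
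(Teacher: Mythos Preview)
Your proof is correct and follows exactly the same approach as the paper: reduce to checking the valuation formula, compute $\alpha(\mathbf{x}^\lambda)$ and $\beta(\mathbf{x}^\lambda)$ separately from the respective ordered-basis hypotheses, and take the minimum. The paper simply says ``the result follows trivially'' where you spell out the bookkeeping with $\beta(g_i)=\infty$ for $i>r$ and the identity $\inf_i \min\{a_i,b_i\}=\min\{\inf_i a_i,\inf_i b_i\}$.
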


\begin{proof}
We need only check that
$$\omega(\mathbf{x}^\lambda) = \inf_{1\leq i\leq s} \{\omega(x_i) + v_p(\lambda_i)\}$$
for any $\lambda\in\mathbb{Z}_p^s$. But we have by definition that
$$\alpha(\mathbf{x}^\lambda) = \inf_{1\leq i\leq s} \{\alpha(x_i) + v_p(\lambda_i)\},$$
$$\beta(\mathbf{x}^\lambda) = \inf_{1\leq i\leq r} \{\beta(x_i) + v_p(\lambda_i)\},$$
and the result follows trivially.
\end{proof}

\subsection{Separating a free abelian quotient}

Results in later sections will require the existence of a $p$-valuation on an appropriate group $G$ satisfying a certain technical property, which we can now finally state:

\begin{defn}\label{defn: omegaprop}
Let $(G,\omega)$ be a complete $p$-valued group of finite rank, and $L$ a closed isolated normal subgroup containing $[G,G]$ (and hence containing the isolated derived subgroup $G'$, which was defined and written as $G^{(1)}$ in \cite[Theorem B]{woods-struct-of-G}). We will say that $\omega$ satisfies property (\omegaprop{L}) if there is an ordered basis $\{g_{d+1}, \dots, g_e\}$ for $(L,\omega|_L)$, contained in an ordered basis $\{g_1, \dots, g_e\}$ for $(G,\omega)$ (e.g. constructed by Lemma \ref{lem: extending ordered bases}), such that the following hold:
\begin{flalign}
\begin{aligned}
\\
\txt{and, for all $\ell\in L$},
\end{aligned}
&&
\begin{rcases}
\omega(g_1) = \dots = \omega(g_d),\quad\\
\omega(g_1) < \omega(\ell).\quad
\end{rcases}&&
\tag{\omegaprop{L}}
\end{flalign}
\end{defn}

\begin{rk}
In the notation of the above definition, suppose that $\omega$ satisfies $\omega(g_1) < \omega(\ell)$ for all $\ell \in L$. Then, by our earlier remarks, we note that the condition (\omegaprop{L}) is equivalent to the statement that the quotient $p$-valuation induced by $\omega$ on $G/L$ is the $(t,p)$-filtration for $t := \omega(g_1)$.
\end{rk}

\begin{defn}
Following \cite[III, 2.1.2]{lazard}, we will say that a group $G$ is \emph{$p$-valuable} if there exists a $p$-valuation $\omega$ for $G$, and $G$ is complete with respect to $\omega$ and has finite rank.
\end{defn}

\begin{lem}\label{cor: there exists a nice omega}
Let $G$ be a nilpotent $p$-valuable group, and $L$ a closed isolated normal subgroup containing $G'$. Then there exists some $p$-valuation $\omega$ for $G$ satisfying (\omegaprop{L}).
\end{lem}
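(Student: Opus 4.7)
The plan is to build $\omega$ as the infimum of two $p$-valuations on $G$, by applying Theorem \ref{thm: lifting p-valuations}: an initial $p$-valuation $\alpha$ coming from the hypothesis that $G$ is $p$-valuable, and a lift $\beta$ of a $(t,p)$-filtration on the quotient $G/L$ for a carefully chosen parameter $t$.

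First, I would fix any $p$-valuation $\alpha$ on $G$ and use Lemma \ref{lem: extending ordered bases} to extract an ordered basis $B_G = \{x_1,\dots,x_e\}$ for $(G,\alpha)$ whose tail $B_L = \{x_{r+1},\dots,x_e\}$ is an ordered basis for $(L,\alpha|_L)$. The key observation making the whole approach work is that $G/L$ is a free abelian pro-$p$ group of rank $r$: it is abelian because $L\supseteq G'\supseteq [G,G]$, torsion-free because $L$ is isolated, and pro-$p$ of finite rank because $G$ is. Hence the $(t,p)$-filtration $\overline{\beta}$ of Definition \ref{defn: (t,p)-filtration} is available on $G/L$ for any $t>(p-1)^{-1}$.

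Next, I would choose $t$ in the nonempty interval $\bigl((p-1)^{-1},\, \min_{1\leq i\leq e}\alpha(x_i)\bigr)$, let $\beta$ be the pullback of $\overline{\beta}$ along $G\to G/L$, and set $\omega:=\inf\{\alpha,\beta\}$. Theorem \ref{thm: lifting p-valuations} applies (since $L$ is closed, isolated, and normal, hence orbital) to guarantee that $\omega$ is a $p$-valuation on $G$, and Lemma \ref{lem: o.b. doesn't change} certifies that $B_G$ is still an ordered basis for $(G,\omega)$. Since $\beta$ is identically $\infty$ on $L$, we have $\omega|_L = \alpha|_L$, so $B_L$ remains an ordered basis for $(L,\omega|_L)$.

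Finally, one checks (\omegaprop{L}) with $d=r$: for $i\leq r$, the image $x_iL$ is part of a $\mathbb{Z}_p$-basis of $G/L$, giving $\beta(x_i)=t$, and the choice $t<\alpha(x_i)$ forces $\omega(x_i)=t$, so $\omega(x_1)=\dots=\omega(x_r)=t$; while for any $\ell\in L\setminus\{1\}$, the ordered-basis formula for $(L,\alpha|_L)$ yields $\omega(\ell)=\alpha(\ell)\geq \min_{j>r}\alpha(x_j)>t=\omega(x_1)$. I do not anticipate any substantive obstacle here: the entire argument is combinatorial bookkeeping around Theorem \ref{thm: lifting p-valuations} and Lemma \ref{lem: o.b. doesn't change}, and the only point requiring a moment's thought is ensuring that the interval for $t$ is nonempty, which is immediate from the $p$-valuation axiom $\alpha(x) > (p-1)^{-1}$.
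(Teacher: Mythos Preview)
Your proposal is correct and follows essentially the same approach as the paper: fix an initial $p$-valuation $\alpha$, extract an ordered basis adapted to $L$ via Lemma~\ref{lem: extending ordered bases}, choose $t$ just above $(p-1)^{-1}$ and no greater than the minimum $\alpha$-value of the basis elements, and set $\omega = \inf\{\alpha,\beta\}$ where $\overline{\beta}$ is the $(t,p)$-filtration on $G/L$, invoking Theorem~\ref{thm: lifting p-valuations} and Lemma~\ref{lem: o.b. doesn't change}. Your choice of a \emph{strict} upper bound $t < \min_i \alpha(x_i)$ is in fact slightly more careful than the paper's $t \leq \inf_i \alpha(g_i)$, since it makes the verification of the strict inequality $\omega(x_1) < \omega(\ell)$ for $\ell\in L$ immediate.
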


\begin{proof}
Let $\alpha$ be a $p$-valuation on $G$. Take an ordered basis $\{g_{d+1}, \dots, g_e\}$ for $(L, \alpha|_L)$ and extend it to an ordered basis $\{g_1, \dots, g_e\}$ for $(G,\alpha)$ by Lemma \ref{lem: extending ordered bases}. Fix a number $t$ satisfying
\begin{align*}
(p-1)^{-1} < t \leq \inf_{1\leq i\leq e} \alpha(g_i).
\end{align*}
Applying Theorem \ref{thm: lifting p-valuations} with $N = L$ and $\overline{\beta}$ the $(t, p)$-filtration on $G/L$, we see that $\omega = \inf\{\alpha, \beta\}$ is a $p$-valuation for $G$; and by Lemma \ref{lem: o.b. doesn't change}, $\{g_1, \dots, g_e\}$ is still an ordered basis for $(G,\omega)$, so we can check easily that $\omega$ satisfies (\omegaprop{L}) by construction.
\end{proof}

%\begin{lem}
%Let $G$ be a $p$-valuable group, and let $F$ be a finite group acting on the set of $p$-valuations of $G$. Then there is an $F$-stable $p$-valuation of $G$, i.e. a $p$-valuation $\omega$ of $G$ such that $f\cdot \omega = \omega$ for all $f\in F$.
%\end{lem}

\begin{rk}
In fact, by analysing the construction in Theorem \ref{thm: lifting p-valuations}, we can see that we have shown something stronger: that any $p$-valuation $\alpha$ may be refined to such an $\omega$ satisfying $\omega|_L = \alpha|_L$. But we will not use this fact in this paper.
\end{rk}

\begin{rk}
If $\omega$ satisfies (\omegaprop{L}) as above, write $t := \omega(g_1)$. Then, for any automorphism $\sigma$ of $G$ and any $1\leq i\leq d$, we have $$\omega(\sigma(g_i)) = t.$$ This follows from Lemma \ref{lem: properties of (t,p)-filtration}(iii). Indeed, by construction, we have $G_t = G$, and $G_{t^+} = G^p\cdot L$, an open normal subgroup; and when $L$ is characteristic, $G_{t^+}$ is characteristic.
\end{rk}

Now let $G$ be a $p$-valuable group with fixed $p$-valuation $\omega$, and let $\sigma\in \Aut(G)$. In this subsection and the next, we seek to establish conditions under which a given automorphism $\sigma$ of $G$ will preserve the ``dominant" part of certain elements $x\in G$ (with respect to $\omega$). That is, we are looking for a condition under which
$$\gr_\omega (\sigma(x)) = \gr_\omega(x).$$
Clearly it is necessary and sufficient that the following holds:
\begin{align}
\omega(\sigma(x)x^{-1}) > \omega(x).\label{eqn: sigma-1 increases omega}
\end{align}

The results of this paper rely on our ability to invoke the following technical result.

\begin{thm}\label{thm: M is in Gamma_1}
Let $G$ be a $p$-valuable group, and let $L$ be a proper closed isolated orbital (hence normal) subgroup containing $[G,G]$, so that we have an isomorphism $\varphi: G/L \to \mathbb{Z}_p^d$ for some $d\geq 1$. Write $q: G\to G/L$ for the natural quotient map.

Choose a $\mathbb{Z}_p$-basis $\{e_1, \dots, e_d\}$ for $\mathbb{Z}_p^d$. For each $1\leq i\leq d$, fix an element $g_i\in G$ with $\varphi\circ q(g_i) = e_i$. Fix an automorphism $\sigma$ of $G$ preserving $L$, so that $\sigma$ induces an automorphism $\overline{\sigma}$ of $G/L$, and hence an automorphism $\hat{\sigma} = \varphi\circ\overline{\sigma}\circ\varphi^{-1}$ of $\mathbb{Z}_p^d$. Let $M_\sigma$ be the matrix of $\hat{\sigma}$ with respect to the basis $\{e_1, \dots, e_d\}$.

Suppose there exists some $p$-valuation $\omega$ on $G$ with the following properties:
\begin{itemize}
\item[(i)] (\ref{eqn: sigma-1 increases omega}) holds for all $x\in \{g_1, \dots, g_d\}$,
\item[(ii)] $\omega(g_1) = \dots = \omega(g_d) (= t$, say$)$,
\item[(iii)] $\omega(\ell) > t$ for all $\ell\in L$.
\end{itemize}
Then $M_\sigma - 1 \in pM_d(\mathbb{Z}_p)$.
\end{thm}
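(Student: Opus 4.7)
The plan is to show that the quotient $p$-valuation $\Omega$ induced by $\omega$ on $G/L$ (which exists because $L$ is closed isolated normal, via Definition \ref{defn: quotient p-val}) coincides with the $(t,p)$-filtration under the identification $\varphi : G/L \xrightarrow{\sim} \mathbb{Z}_p^d$, and then to read off the matrix condition on $M_\sigma$ from hypothesis (i).

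For the first step, the images $g_i L$ form a $\mathbb{Z}_p$-module basis of $G/L$ by construction, so by Lemma \ref{lem: properties of (t,p)-filtration}(ii) it suffices to show $\Omega(g_i L) = t$ for every $i$. By definition, $\Omega(g_i L) = \sup_{k \in L}\omega(g_i k)$. For $k = 1$ this is $\omega(g_i) = t$. For $k \neq 1$, hypothesis (iii) gives $\omega(k) > t = \omega(g_i)$, and the standard ultrametric equality for $p$-valuations (which follows formally from the axiom $\omega(xy^{-1}) \geq \min\{\omega(x),\omega(y)\}$ together with $\omega(y^{-1}) = \omega(y)$) then forces $\omega(g_i k) = \omega(g_i) = t$. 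Hence $\Omega(g_i L) = t$, so Lemma \ref{lem: properties of (t,p)-filtration}(ii) identifies $\Omega$ with the $(t,p)$-filtration on $G/L$. Note $t > (p-1)^{-1}$ automatically, since $t = \omega(g_1)$ and $\omega$ is a $p$-valuation.

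For the second step, note that for any $x \in G$ one has $\Omega(xL) = \sup_{k \in L}\omega(xk) \geq \omega(x)$. Applying this to $x = \sigma(g_i) g_i^{-1}$ and using hypothesis (i) gives $\Omega(\overline{\sigma(g_i)g_i^{-1}}) > t$ in $G/L$. Transporting via $\varphi$, the image of $\sigma(g_i)g_i^{-1}$ in $\mathbb{Z}_p^d$ is exactly $\hat{\sigma}(e_i) - e_i$. Since $\Omega$ is the $(t,p)$-filtration, an element of $G/L$ has $\Omega$-value strictly greater than $t$ if and only if it lies in $(G/L)^p$, i.e.\ in $p\mathbb{Z}_p^d$. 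Therefore $\hat{\sigma}(e_i) - e_i \in p\mathbb{Z}_p^d$ for each $i$, which says precisely that every column of $M_\sigma - I$ lies in $p\mathbb{Z}_p^d$, i.e.\ $M_\sigma - I \in pM_d(\mathbb{Z}_p)$, as required.

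The only mildly delicate point is the computation $\Omega(g_iL) = t$: one must use that (iii) is a strict inequality to invoke the ultrametric equality and pin down the supremum at $t$ (a non-strict inequality would only give $\Omega(g_iL) \geq t$, which is insufficient for applying Lemma \ref{lem: properties of (t,p)-filtration}(ii)). Everything else is direct bookkeeping once $\Omega$ has been identified as the $(t,p)$-filtration.
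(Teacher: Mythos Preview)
Your proof is correct and follows essentially the same route as the paper: identify the quotient $p$-valuation $\Omega$ on $G/L$ as the $(t,p)$-filtration via Lemma~\ref{lem: properties of (t,p)-filtration}(ii) using hypotheses (ii) and (iii), then use hypothesis (i) together with $\Omega(xL)\geq\omega(x)$ to push $\hat{\sigma}(e_i)-e_i$ into $p\mathbb{Z}_p^d$. Your treatment is slightly more explicit than the paper's about why $\omega(g_ik)=t$ for all $k\in L$ (the ultrametric equality) and about the inequality $\Omega(xL)\geq\omega(x)$, but the argument is the same.
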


\begin{rk}
Conditions (ii) and (iii) are just the statement that $\omega$ satisfies (\omegaprop{L}).
\end{rk}

\begin{proof}
Define the function $\Omega: \mathbb{Z}_p^d\to \Rinfty$ by
$$\Omega\circ\varphi(gL) = \sup_{\ell\in L}\{\omega(g\ell)\}.$$
By the remarks made in Definition \ref{defn: quotient p-val}, $\Omega$ is in fact a $p$-valuation.

By assumption (iii), we see that, for each $1\leq i\leq d$ and any $\ell\in L$, we have $\omega(g_i) = \omega(g_i\ell)$, so that
$$\Omega(e_i) = \Omega\circ\varphi(g_i L) = \sup_{\ell\in L}\{\omega(g_i\ell)\} = \omega(g_i),$$
so by assumption (ii), $\Omega(e_i) = t$. Hence, by Lemma \ref{lem: properties of (t,p)-filtration}(ii), $\Omega$ must be the $(t,p)$-filtration on $\mathbb{Z}_p^d$. Now, by assumption (i), we have
\begin{align*}
\Omega(\hat{\sigma}(x) - x) > t
\end{align*}
for all $x\in\{e_1, \dots, e_d\}$, and hence, as $\Omega-t$ takes integer values (by Definition \ref{defn: (t,p)-filtration}),
\begin{align*}
\Omega(\hat{\sigma}(x) - x) \geq t+1,
\end{align*}
and so $\hat{\sigma}(x) - x\in p\mathbb{Z}_p^d$ for each $x\in\{e_1, \dots, e_d\}$, which is what we wanted to prove.
\end{proof}

\begin{rk}
With Lemma \ref{lem: properties of (t,p)-filtration} in mind, we note the following: suppose $\omega$ satisfies hypothesis (iii) of Theorem \ref{thm: M is in Gamma_1}. Then hypothesis (ii) is equivalent to the statement that the quotient filtration induced by $\omega$ on $G/L$ is actually the $(t,p)$-filtration on $G/L$.
\end{rk}

%%%%%%

\subsection{Invariance under the action of a crossed product}

\begin{defn}\label{defn: cpnormal}
Let $R$ be a ring, and fix a subgroup $G\leq R^\times$; let $F$ be a group. Fix a crossed product $$S = R\cp{\sigma}{\tau} F.$$ Consider the following properties that this crossed product may satisfy:
\begin{align}
&\txt{The image $\sigma(F)$ normalises $G$, i.e. $x^{\sigma(f)} \in G$ for all $x\in G, f\in F$.}\tag{\cpnormal{G}}\\
&\txt{The image $\tau(F,F)$ normalises $G$.}\tag{\taunormal{G}}\\
&\txt{The image $\tau(F,F)$ is a subset of $G$.}\tag{\tausub{G}}
\end{align}
In the case when $G$ is $p$-valuable, consider the set of $p$-valuations of $G$. Then $\Aut(G)$ acts on this set as follows:
$$(\varphi\cdot\omega)(x) = \omega(x^\varphi).$$
When $S$ satisfies (\taunormal{G}), $\tau(F,F)\subseteq G$, so we get a map $\rho: \tau(F,F)\to \mathrm{Inn}(G)$ (with elements of $G$ acting by conjugation), so we will also consider the following property:
\begin{align}
&\txt{Every $p$-valuation $\omega$ of $G$ is invariant under elements of $\tau(F,F)$.}\tag{\tauinv{G}}
\end{align}
\end{defn}

\begin{lem}\label{lem: cp properties}
In the notation above:
\begin{itemize}
\item[(i)] If $S$ satisfies (\cpnormal{G}), then $S$ satisfies (\taunormal{G}).
\item[(ii)] If $S$ satisfies (\tausub{G}), then $S$ satisfies (\taunormal{G}).
\item[(iii)] If $S$ satisfies (\tausub{G}), then $S$ satisfies (\tauinv{G}).
\end{itemize}
\end{lem}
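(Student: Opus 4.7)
The three claims decouple naturally, so the plan is to handle them in turn.

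For (i), I would begin by unpacking the crossed-product relations $\overline{f}\,\overline{g} = \tau(f,g)\overline{fg}$ and $\overline{h}\,r = \sigma(h)(r)\,\overline{h}$ defining $S = R\cp{\sigma}{\tau} F$. These give $\tau(f,g) = \overline{f}\,\overline{g}\,\overline{fg}^{-1}$ in $S$, and a short computation using $\overline{h}\,r\,\overline{h}^{-1} = \sigma(h)(r)$ three times shows that conjugation by $\tau(f,g)$ on $R$ coincides with the automorphism $\sigma(f)\circ\sigma(g)\circ\sigma(fg)^{-1}$. Now (\cpnormal{G}) says that each $\sigma(h)\in\sigma(F)$ restricts to an automorphism of $G$, and since $\{\varphi\in\Aut(R) : \varphi(G) = G\}$ is a subgroup of $\Aut(R)$, the composition $\sigma(f)\sigma(g)\sigma(fg)^{-1}$ also preserves $G$. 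Hence $\tau(f,g)\, G\, \tau(f,g)^{-1} = G$, which is (\taunormal{G}).

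For (ii), any element of $G$ trivially normalises $G$ as a subgroup of $R^\times$, so (\tausub{G}) implies (\taunormal{G}) with no further work. For (iii), (\tausub{G}) places each $\tau(f,g)$ inside $G$, so the map $\rho$ sends $\tau(f,g)$ to the inner automorphism of $G$ given by conjugation; to establish (\tauinv{G}) it therefore suffices to show that every $p$-valuation $\omega$ on $G$ is invariant under inner automorphisms. This is standard: for $z,x\in G$, the element $zxz^{-1}x^{-1}$ is a commutator of $z^{\pm 1}$ and $x^{\pm 1}$, so by the commutator axiom $\omega(zxz^{-1}x^{-1}) \geq \omega(z)+\omega(x) > \omega(x)$ (using $\omega(z) > (p-1)^{-1} > 0$). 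Writing $zxz^{-1} = (zxz^{-1}x^{-1})\cdot x$ and applying the ultrametric axiom then gives $\omega(zxz^{-1}) \geq \omega(x)$, and the reverse inequality follows by replacing $z$ with $z^{-1}$.

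There is no serious obstacle. The only genuine content is the crossed-product computation in (i), whose substance is just that the 2-cocycle $\tau$ records the failure of $\sigma$ to be a group homomorphism into $\Aut(R)$, and so automatically inherits the normalisation property that $\sigma(F)$ is assumed to possess. Parts (ii) and (iii) are then near-tautological given the standard commutator estimate for $p$-valuations.
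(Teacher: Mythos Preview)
Your proposal is correct and follows essentially the same approach as the paper. For (i) the paper simply records the identity $\rho\circ\tau(x,y)=\sigma(xy)^{-1}\sigma(x)\sigma(y)$ (your computation spells this out in more detail, with a minor conventional difference in the order of the factors); (ii) is declared obvious in the paper; and for (iii) the paper uses the same commutator-plus-ultrametric argument, writing $t^{-1}xt = x\cdot[x,t]$ and invoking $\omega([x,t])\geq\omega(x)+\omega(t)>\omega(x)$ before appealing to symmetry.
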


\begin{proof}
$ $

\begin{itemize}
\item[(i)] Note that $\rho\circ \tau(x,y) = \sigma(xy)^{-1}\sigma(x)\sigma(y)$.
\item[(ii)] Obvious.
\item[(iii)] By (ii), we see that $S$ satisfies (\taunormal{G}), so it makes sense to consider (\tauinv{G}).

Let $\omega$ be a $p$-valuation of $G$, and take $t\in \tau(F,F)$. As $S$ satisfies (\tausub{G}), we actually have $t\in G$. Then, for any $x\in G$, we have
\begin{align*}
(t\cdot\omega)(x) &= \omega(x^t)\\
&= \omega(t^{-1} x t)\\
&= \omega(x \cdot [x,t]) \\
&\geq \min\{ \omega(x), \omega([x,t]) \} = \omega(x),
\end{align*}
and so (by symmetry) $\omega(t^{-1}xt) = \omega'(x)$.
\end{itemize}
\end{proof}

\begin{defn}\label{defn: recall 2-cocycle twist}
Recall, from \cite[Definition 5.4]{woods-prime-quotients}, that if we have a fixed crossed product
\begin{align}
S = R\cp{\sigma}{\tau} F\label{eqn: a cp}
\end{align}
and a 2-cocycle $$\alpha\in Z^2_\sigma(F, Z(R^\times)),$$ then we may define the ring $$S_\alpha = R\cp{\sigma}{\tau\alpha} F,$$ the \emph{2-cocycle twist (of $R$, by $\alpha$, with respect to the decomposition (\ref{eqn: a cp}))}.
\end{defn}

\begin{lem}\label{lem: cpnormality preserved under twisting}
Continuing with the notation above,
\begin{itemize}
\item[(i)] $S$ satisfies (\cpnormal{G}) if and only if $S_\alpha$ satisfies (\cpnormal{G}).
\item[(ii)] $S$ satisfies (\tauinv{G}) if and only if $S_\alpha$ satisfies (\tauinv{G}).
\end{itemize}
\end{lem}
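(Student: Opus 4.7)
The plan is to exploit the fact that the 2-cocycle twist $S_\alpha = R \cp{\sigma}{\tau\alpha} F$ differs from $S = R \cp{\sigma}{\tau} F$ only in the 2-cocycle (from $\tau$ to $\tau\alpha$); the action $\sigma : F \to \Aut(R)$ is unchanged. Moreover, by Definition \ref{defn: recall 2-cocycle twist}, $\alpha$ takes values in $Z(R^\times)$, so each $\alpha(f,g)$ commutes with every element of $R$, and in particular with every element of $G \leq R^\times$. Both parts of the lemma will follow formally from these two observations.

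For (i), property (\cpnormal{G}) is phrased purely in terms of the action $\sigma(F)$ on $G$; since $\sigma$ is identical in $S$ and $S_\alpha$, the equivalence is immediate, with no computation required.

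For (ii), the key step is to compare the conjugation action on $G$ of $(\tau\alpha)(F,F)$ with that of $\tau(F,F)$. For $f,g \in F$ and $x \in G$ I would compute
$$x^{(\tau\alpha)(f,g)} = (\tau(f,g)\alpha(f,g))^{-1} x (\tau(f,g)\alpha(f,g)) = \tau(f,g)^{-1} x \tau(f,g) = x^{\tau(f,g)},$$
where the central factors $\alpha(f,g)$ cancel because they commute with $x$. This single identity accomplishes two things simultaneously: first, $(\tau\alpha)(F,F)$ normalises $G$ iff $\tau(F,F)$ does, so (\taunormal{G}) transfers between $S$ and $S_\alpha$ and hence (\tauinv{G}) is meaningful in one iff in the other; second, the inner automorphism of $G$ induced by any element of the cocycle image is literally the same in $S$ and $S_\alpha$. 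Therefore a $p$-valuation $\omega$ of $G$ is invariant under $\tau(f,g)$ iff it is invariant under $(\tau\alpha)(f,g)$, which is exactly the content of (ii).

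There is no substantive obstacle here: the lemma is a formal consequence of the centrality of $\alpha$, which renders it invisible to any conjugation computation on $G$. The only point needing a moment of care is checking that (\tauinv{G}) remains well-defined after twisting, but this is subsumed by the same computation displayed above.
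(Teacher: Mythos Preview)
Your proposal is correct and follows essentially the same approach as the paper: part (i) is immediate because (\cpnormal{G}) depends only on $\sigma$, which is unchanged by the twist; and part (ii) follows because $\alpha(F,F)\subseteq Z(R^\times)$ acts trivially by conjugation on $G$, so the conjugation action of $(\tau\alpha)(f,g)$ on $G$ coincides with that of $\tau(f,g)$. Your write-up is in fact more careful than the paper's, in that you explicitly note the same computation also transfers (\taunormal{G}) and hence makes (\tauinv{G}) well-defined on both sides.
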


\begin{proof}
$ $

\begin{itemize}
\item[(i)] Trivial from Definitions \ref{defn: cpnormal} and \ref{defn: recall 2-cocycle twist}.
\item[(ii)] As $\alpha(F,F) \subseteq Z(R)^\times$, conjugation by $\alpha$ is the identity automorphism on $G$.\qedhere
\end{itemize}
\end{proof}

These properties will be interesting to us later as they will allow us to invoke the following lemma:

\begin{lem}\label{lem: group action on p-valuations}
If $S$ satisfies (\cpnormal{G}), then, given any $g\in F$ and $p$-valuation $\omega$ on $G$, the function $g\cdot\omega$ given by $$(g\cdot\omega)(x) = \omega(x^{\sigma(g)})$$ is again a $p$-valuation on $G$. If, further, $S$ satisfies (\tauinv{G}), then this is a \emph{group} action of $F$ on the set of $p$-valuations of $G$.
\end{lem}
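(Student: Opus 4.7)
The plan is to verify both statements directly from the crossed-product relation $\sigma(g)\sigma(h) = \sigma(gh)\tau(g,h)$, using (\cpnormal{G}) to get a genuine conjugation action on $G$ and (\tauinv{G}) to absorb the residual $\tau$-cocycle.

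For the first statement, I would note that by (\cpnormal{G}) the conjugation map $c_{\sigma(g)} \colon x \mapsto \sigma(g)^{-1}x\sigma(g)$ restricts to a map $G \to G$, and conjugation by a unit of $R$ is always a group homomorphism, so $c_{\sigma(g)}|_G \colon G \to G$ is a (necessarily injective) group endomorphism. The five $p$-valuation axioms for $g\cdot\omega = \omega \circ c_{\sigma(g)}|_G$ then transfer directly from those of $\omega$: subadditivity on $xy^{-1}$ and the commutator bound follow from $c_{\sigma(g)}$ being a homomorphism; $\omega(x) = \infty \iff x = 1$ follows from injectivity; the lower bound $>(p-1)^{-1}$ is immediate since $c_{\sigma(g)}(x) \in G$; and $\omega(x^p) = \omega(x)+1$ follows from $c_{\sigma(g)}(x^p) = c_{\sigma(g)}(x)^p$.

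For the group action claim, I would verify the identity and composition axioms in turn. Setting $g=h=1$ in $\sigma(g)\sigma(h) = \sigma(gh)\tau(g,h)$ and cancelling the unit $\sigma(1)$ gives $\sigma(1) = \tau(1,1) \in \tau(F,F)$, and (\tauinv{G}) then yields $(1\cdot\omega)(x) = \omega(x^{\sigma(1)}) = \omega(x)$. For composition,
\[
(g\cdot(h\cdot\omega))(x) = \omega\bigl(x^{\sigma(g)\sigma(h)}\bigr) = \omega\bigl(x^{\sigma(gh)\tau(g,h)}\bigr) = \omega\bigl(\tau(g,h)^{-1}\, x^{\sigma(gh)}\, \tau(g,h)\bigr).
\]
By Lemma \ref{lem: cp properties}(i), (\cpnormal{G}) implies (\taunormal{G}), so $\tau(g,h)$ normalises $G$, and $x^{\sigma(gh)} \in G$; applying (\tauinv{G}) to $\omega$ at this element rewrites the right-hand side as $\omega(x^{\sigma(gh)}) = ((gh)\cdot\omega)(x)$, as required.

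The only real obstacle I anticipate is keeping track of the convention for the cocycle (left vs.\ right placement of $\tau$), since it is crucial that (\tauinv{G}) be applied to the \emph{original} $\omega$ rather than to the newly constructed $(gh)\cdot\omega$. Writing the relation as $\sigma(g)\sigma(h) = \sigma(gh)\tau(g,h)$ (following the convention implicit in the proof of Lemma \ref{lem: cp properties}(i)) places $\tau(g,h)$ at the outermost position after unpacking the double conjugation, so this works out automatically; the rest of the argument is mechanical.
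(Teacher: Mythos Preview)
Your proposal is correct and follows essentially the same route as the paper: the paper's proof is very terse, simply noting that (\cpnormal{G}) makes $g\cdot\omega$ well-defined and then running exactly your chain $(g\cdot(h\cdot\omega))(x) = \omega(x^{\sigma(g)\sigma(h)}) = \omega(x^{\sigma(gh)\tau(g,h)}) = \omega(x^{\sigma(gh)})$ via (\tauinv{G}). Your version is more careful in that you explicitly verify the five $p$-valuation axioms and the identity axiom $1\cdot\omega = \omega$ (which the paper leaves implicit), but the substance is the same.
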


\begin{proof}
If $x\in G$, then $x^{\sigma(g)}\in G$ because $S$ satisfies (\cpnormal{G}), so it makes sense to consider $\omega(x^{\sigma(g)})$. The definition above does indeed give a group action when $S$ satisfies (\tauinv{G}), as, for all $g,h\in F$,
\begin{align*}
(g\cdot(h\cdot \omega))(x) & = h\cdot \omega(x^{\sigma(g)})\\
&= \omega(x^{\sigma(g)\sigma(h)})\\
&= \omega(x^{\sigma(gh)\tau(g,h)})\\
&= \omega(x^{\sigma(gh)})&\txt{by (\tauinv{G})}\\
&= (gh\cdot\omega)(x).&&\qedhere
\end{align*}
\end{proof}

The following lemma will allow us to prove the existence of a sufficiently ``nice" $p$-valuation.

\begin{lem}\label{lem: there exists an F-stable omega satisfying omegaprop}
Suppose $S$ satisfies (\cpnormal{G}) and (\tauinv{G}), so that $\sigma$ induces an action of $F$ on the set of $p$-valuations on $G$ as in the above lemma. Let $\omega$ be a $p$-valuation on $G$. If the $F$-orbit of $\omega$ is finite, then $\omega'(x) = \inf_{g\in F}(g\cdot \omega)(x)$ defines an $F$-invariant $p$-valuation on $G$.

Furthermore, if $L$ is a closed isolated characteristic subgroup of $G$ containing $G'$, and $\omega$ satisfies (\omegaprop{L}) (as in Definition \ref{defn: omegaprop}), then $\omega'$ satisfies (\omegaprop{L}).
\end{lem}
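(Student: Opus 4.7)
Since the $F$-orbit of $\omega$ is finite by hypothesis, $\omega'$ is the pointwise minimum of finitely many $p$-valuations $g_1 \cdot \omega, \dots, g_r \cdot \omega$. All five $p$-valuation axioms transfer to such a finite minimum by routine verification; the only slightly subtle axiom, $\omega'(x^p) = \omega'(x) + 1$, follows from the identity $\min_i(a_i + 1) = (\min_i a_i) + 1$. For $F$-invariance, I compute
\[
(h \cdot \omega')(x) = \min_g \omega\bigl(x^{\sigma(h)\sigma(g)}\bigr) = \min_g \omega\bigl(x^{\sigma(hg)\tau(h,g)}\bigr) = \min_g \omega\bigl(x^{\sigma(hg)}\bigr),
\]
where the last equality uses (\tauinv{G}) to erase the cocycle factor; reindexing via $g' = hg$ yields $\omega'(x)$.

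For $(\omegaprop{L})$, fix an ordered basis $\{g_1, \dots, g_e\}$ for $(G, \omega)$ realising $(\omegaprop{L})$, with $t := \omega(g_1) = \dots = \omega(g_d)$, $\{g_{d+1}, \dots, g_e\}$ an ordered basis for $L$, and $\omega|_L > t$. I plan to verify two things: (a) $\omega'|_L > t$, and (b) the quotient $p$-valuation $\Omega'$ induced by $\omega'$ on $G/L$ equals the $(t,p)$-filtration. Granting these, extending an ordered basis for $(L, \omega'|_L)$ to one for $(G, \omega')$ via Lemma \ref{lem: extending ordered bases} produces an ordered basis witnessing $(\omegaprop{L})$ for $\omega'$: the non-$L$ part projects to a $\mathbb{Z}_p$-basis of $G/L$ on which $\Omega'$ takes the constant value $t$, and (by the standard Lazard construction) the lifts can be chosen so that their $\omega'$-values realise the $\Omega'$-values of their images.

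Assertion (a) is immediate from $L$ being characteristic: for any $\ell \in L$ and $g \in F$, $\ell^{\sigma(g)} \in L$, so $(g \cdot \omega)(\ell) = \omega(\ell^{\sigma(g)}) > t$, and the minimum over the finite orbit preserves strict inequality. For (b), taking $g = 1$ in the defining minimum shows $\omega' \leq \omega$ pointwise, hence $\Omega' \leq \Omega$ on $G/L$, and in particular $\Omega'(\bar g_i) \leq t$ for $1 \leq i \leq d$. For the reverse inequality, expand $g_i^{\sigma(g)} = \mathbf{g}^\lambda$ in the ordered basis: its image in $G/L$ is $\overline{\sigma_g}(\bar g_i)$ under the induced $\mathbb{Z}_p$-linear automorphism $\overline{\sigma_g}$, whose matrix (with respect to $\bar g_1, \dots, \bar g_d$) is invertible over $\mathbb{Z}_p$, so at least one coordinate $\lambda_j$ with $j \leq d$ is a $p$-adic unit. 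The ordered basis formula then reads
\[
\omega(g_i^{\sigma(g)}) = \inf_k \{\omega(g_k) + v_p(\lambda_k)\} = t,
\]
since every summand is $\geq t$ and the one for $k = j$ attains $t$. Hence $\omega'(g_i) = \min_g \omega(g_i^{\sigma(g)}) = t$, which forces $\Omega'(\bar g_i) \geq \omega'(g_i) = t$, and Lemma \ref{lem: properties of (t,p)-filtration}(ii) identifies $\Omega'$ with the $(t,p)$-filtration.

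\textbf{Main obstacle.} The crux of the argument is the calculation $\omega(g_i^{\sigma(g)}) = t$, which simultaneously uses the $L$-characteristic hypothesis (so that $\overline{\sigma_g}$ descends to a genuine automorphism of $G/L$), the characteristic-invariance of the $(t,p)$-filtration (Lemma \ref{lem: properties of (t,p)-filtration}(iii), reflected in the invertibility of $\overline{\sigma_g}$'s matrix), and the ordered basis formula for $\omega$; everything else is minimum/supremum manipulation.
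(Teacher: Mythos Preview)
Your proposal is correct and follows essentially the same approach as the paper. The paper's own proof is extremely terse: for the first assertion it simply cites Lazard \cite[III, 2.1.2]{lazard} for the fact that a finite infimum of $p$-valuations is again a $p$-valuation, and for the second it observes (via Lemma~\ref{lem: properties of (t,p)-filtration}(iii) and the hypothesis that $L$ is characteristic) that each $g\cdot\omega$ again induces the $(t,p)$-filtration on $G/L$ and satisfies $(g\cdot\omega)|_L > t$, then states that taking the infimum preserves (\omegaprop{L}). Your argument supplies the details the paper omits---in particular, the explicit verification that $\omega'(g_i) = t$ via the ordered-basis formula and the sandwich $\omega'(g_i) \leq \Omega'(\bar g_i) \leq \Omega(\bar g_i) = t$---but the underlying mechanism (characteristic invariance of $L$ and of the $(t,p)$-filtration) is identical.
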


\begin{proof}
The function $\omega'$ satisfies condition \cite[III, 2.1.2.2]{lazard}, since the $F$-orbit of $\omega$ is finite, and is hence a $p$-valuation that is $F$-stable by the remark in \cite[III, 2.1.2]{lazard}.

Suppose $\omega$ satisfies (\omegaprop{L}). That is, for some $t > (p-1)^{-1}$, $\omega$ induces the $(t,p)$-filtration on $G/L$, and $\omega(\ell) > t$ for all $\ell\in L$. But, given any $g\in F$, clearly $g\cdot\omega$ still induces the $(t,p)$-filtration on $G/L$ by Lemma \ref{lem: properties of (t,p)-filtration}(iii), and $(g\cdot\omega)(\ell) = \omega(\ell^{\sigma(g)}) > t$, since $\ell^{\sigma(g)}\in L$ as $L$ is characteristic. Taking the infimum over the finitely many distinct $g\cdot\omega$, $g\in F$, shows that $\omega'$ also satisfies (\omegaprop{L}).
\end{proof}

Recall the \emph{finite radical} $\Delta^+ = \Delta^+(G)$ from \cite[Definition 1.2]{woods-struct-of-G}.

\begin{defn}\label{defn: recall standard cp}
Let $G$ be an arbitrary compact $p$-adic analytic group with $\Delta^+ = 1$, $H$ an open normal subgroup of $G$, $F = G/H$, and $P$ a faithful $G$-stable ideal of $kH$. Recall from \cite[Definition 5.11]{woods-prime-quotients} that the crossed product decomposition $$kG/PkG = kH/P \cp{\sigma}{\tau} F$$ is \emph{standard} if the basis $\overline{F}$ is a subset of the image of the map $G\hookrightarrow (kG/PkG)^\times$.
\end{defn}

\begin{lem}\label{lem: omega in standard cps}
Suppose that $kG/PkG = kH/P \cp{\sigma}{\tau} F$ is a standard crossed product decomposition. Take any $\alpha\in Z^2_\sigma(F, Z((kH/P)^\times))$, and form the central 2-cocycle twist $$(kG/PkG)_\alpha := kH/P \cp{\sigma}{\tau\alpha} F$$ with respect to this decomposition \cite[Definition 5.4]{woods-prime-quotients}.

Consider $H$ as a subgroup of $(kH/P)^\times$: then conjugation by elements of $\overline{G}$ inside $((kG/PkG)_\alpha)^\times$ induces a group action of $F$ on the set of $p$-valuations of $H$, as in Lemma \ref{lem: group action on p-valuations}.
\end{lem}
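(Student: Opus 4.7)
The plan is to verify directly the hypotheses of Lemma \ref{lem: group action on p-valuations} for the twisted crossed product $(kG/PkG)_\alpha$, namely properties (\cpnormal{H}) and (\tauinv{H}) with $H$ in the role of the $p$-valuable subgroup. By Lemma \ref{lem: cpnormality preserved under twisting}, both properties are preserved when passing from the untwisted crossed product to the 2-cocycle twist, so it suffices to verify them for the standard decomposition $kG/PkG = kH/P \cp{\sigma}{\tau} F$ itself.

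First I would check (\cpnormal{H}): unwinding Definition \ref{defn: recall standard cp}, the fact that the decomposition is standard says that each $\sigma(f) = \overline{f}$ equals the image of some lift $g_f \in G$ under the map $G\hookrightarrow (kG/PkG)^\times$. Since $H$ is normal in $G$ and the embedding $H \hookrightarrow (kH/P)^\times$ is $G$-equivariant with respect to conjugation, conjugation by $\sigma(f)$ coincides with conjugation by $g_f$ and therefore sends $H$ into $H$. This gives (\cpnormal{H}).

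Next I would check (\tausub{H}), which by Lemma \ref{lem: cp properties}(iii) implies (\tauinv{H}). For any $f_1, f_2 \in F$ we have
\[
\tau(f_1, f_2) = \overline{f_1 f_2}^{\,-1}\,\overline{f_1}\,\overline{f_2} = g_{f_1 f_2}^{-1} g_{f_1} g_{f_2}
\]
inside $(kG/PkG)^\times$, and this element lies in the image of $G$. Projecting to $F = G/H$ gives $(f_1 f_2)^{-1} f_1 f_2 = 1$, so $\tau(f_1,f_2)$ lies in the image of $H$, confirming (\tausub{H}).

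Combining these with Lemma \ref{lem: cpnormality preserved under twisting}(i),(ii), the twisted crossed product $(kG/PkG)_\alpha$ satisfies both (\cpnormal{H}) and (\tauinv{H}), so Lemma \ref{lem: group action on p-valuations} applies and produces the desired group action of $F$ on the set of $p$-valuations of $H$ by $(f\cdot \omega)(x) = \omega(x^{\sigma(f)})$. The only subtle point is confirming (\tausub{H}) from the definition of "standard"; once the lifts $g_f\in G$ are introduced, everything reduces to the fact that $H$ is the kernel of $G\twoheadrightarrow F$, and there is no further technical obstacle.
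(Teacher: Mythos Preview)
Your proof is correct and follows exactly the same route as the paper's own argument: verify (\cpnormal{H}) and (\tausub{H}) for the standard decomposition (using that $H\lhd G$ and that the basis $\overline{F}$ lies in the image of $G$), deduce (\tauinv{H}) via Lemma \ref{lem: cp properties}(iii), transport both properties to the twist via Lemma \ref{lem: cpnormality preserved under twisting}, and then invoke Lemma \ref{lem: group action on p-valuations}. The only difference is that you spell out explicitly why (\cpnormal{H}) and (\tausub{H}) hold, whereas the paper simply asserts they hold ``trivially''.
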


\begin{rk}
As the crossed product notation suggests, this lemma simply says that the action of $F$ on $H$, via $\sigma$, is unchanged after applying $(-)_\alpha$.
\end{rk}

\begin{proof}
As the decomposition is standard, $kG/PkG$ trivially satisfies both (\cpnormal{H}) (as $H$ is normal in $G$) and (\tausub{H}). By Lemma \ref{lem: cp properties}(iii), $kG/PkG$ also satisfies (\tauinv{H}). Now Lemma \ref{lem: cpnormality preserved under twisting} shows that $(kG/PkG)_\alpha$ also satisfies (\cpnormal{H}) and (\tauinv{H}), so that $\sigma$ induces a group action of $F$ on the $p$-valuations of $H$ inside $(kG/PkG)_\alpha$ by Lemma \ref{lem: group action on p-valuations}.
\end{proof}

Let $L$ be a closed isolated characteristic subgroup of $H$ containing $[H,H]$.

\begin{cor}\label{cor: there exists an F-stable omega satisfying omegaprop}
With notation as above, we can find an $F$-stable $p$-valuation $\omega$ on $H$ satisfying (\omegaprop{L}).
\end{cor}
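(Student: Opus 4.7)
The plan is to combine the existence result (Lemma \ref{cor: there exists a nice omega}) with the averaging result (Lemma \ref{lem: there exists an F-stable omega satisfying omegaprop}), using Lemma \ref{lem: omega in standard cps} to provide the required group action of $F$.

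First, I would apply Lemma \ref{cor: there exists a nice omega} to the nilpotent $p$-valuable group $H$ and its closed isolated normal subgroup $L$ (which contains $[H,H]$, hence $H'$) to produce some $p$-valuation $\omega_0$ on $H$ satisfying $(\omegaprop{L})$. Next, observe that because $H$ is \emph{open} in the compact $p$-adic analytic group $G$, the quotient $F = G/H$ is finite, so every $F$-orbit on the set of $p$-valuations of $H$ is automatically finite.

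Next, I would invoke Lemma \ref{lem: omega in standard cps} (applied with the trivial 2-cocycle twist $\alpha = 1$, or directly to the standard decomposition): since the decomposition $kG/PkG = kH/P *_\sigma^\tau F$ is standard, it satisfies $(\cpnormal{H})$ and $(\tauinv{H})$, and so conjugation through $\sigma$ gives a genuine group action of $F$ on the set of $p$-valuations of $H$, as in Lemma \ref{lem: group action on p-valuations}. In particular, $\omega_0$ has a (finite) $F$-orbit.

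Finally, since $L$ is assumed to be a closed isolated \emph{characteristic} subgroup of $H$ containing $[H,H]$, and since $\omega_0$ satisfies $(\omegaprop{L})$ and has finite $F$-orbit, Lemma \ref{lem: there exists an F-stable omega satisfying omegaprop} applies directly: the function
$$\omega(x) = \inf_{g\in F} (g\cdot\omega_0)(x)$$
is an $F$-invariant $p$-valuation on $H$ still satisfying $(\omegaprop{L})$, which is what we wanted. There is no real obstacle here — the corollary is essentially a bookkeeping combination of the previous three lemmas, the only substantive point being to notice that finiteness of $F$ gives finiteness of the orbit for free, so that the averaging construction in Lemma \ref{lem: there exists an F-stable omega satisfying omegaprop} can be applied without further hypothesis.
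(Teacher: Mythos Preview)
Your proposal is correct and follows essentially the same approach as the paper: the paper's proof simply says the result ``follows immediately from Lemmas \ref{cor: there exists a nice omega} and \ref{lem: there exists an F-stable omega satisfying omegaprop}'', and you have accurately unpacked the implicit steps (finiteness of $F$, the group action via Lemma \ref{lem: omega in standard cps}, and the averaging construction). There is nothing to correct.
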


\begin{proof}
This now follows immediately from Lemmas \ref{cor: there exists a nice omega} and \ref{lem: there exists an F-stable omega satisfying omegaprop}.
\end{proof}

\textit{Proof of Theorem B.} This follows from Corollary \ref{cor: there exists an F-stable omega satisfying omegaprop}. \qed

\section{A graded ring}\label{section: graded ring}

\subsection{Generalities on ring filtrations}\label{subsection: ring filtrations, w, f}

\begin{defn}
Recall that a \emph{filtration} $v$ on the ring $R$ is a function $v:R\to\Rinfty$ satisfying, for all $x, y\in R$,
\begin{itemize}
\item $v(x+y) \geq \min\{v(x),v(y)\},$
\item $v(xy) \geq v(x)+v(y),$
\item $v(0) = \infty, v(1) = 0.$
\end{itemize}
If in addition we have $v(xy) = v(x)+v(y)$ for all $x,y\in R$, then $v$ is a \emph{valuation} on $R$.
\end{defn}

First, a basic property of ring filtrations.

\begin{lem}\label{lem: basic filtration property}
Suppose $v$ is a filtration on $R$ which takes non-negative values, i.e. $v(R)\subseteq [0,\infty]$, and let $u\in R^\times$. Then $v(ux) = v(xu) = v(x)$ for all $x\in R$.
\end{lem}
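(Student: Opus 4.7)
The plan is to first show that any unit $u \in R^\times$ satisfies $v(u) = 0$, and then deduce the multiplicativity statement by sandwiching $v(ux)$ between two inequalities coming from the submultiplicativity of $v$.

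For the first step, I would use the filtration axiom $v(xy) \geq v(x) + v(y)$ applied to $u$ and $u^{-1}$, together with $v(1) = 0$, to obtain
\[
0 = v(1) = v(u u^{-1}) \geq v(u) + v(u^{-1}).
\]
Since by hypothesis $v$ takes values in $[0, \infty]$, both $v(u)$ and $v(u^{-1})$ are non-negative, so the displayed inequality forces $v(u) = v(u^{-1}) = 0$.

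For the second step, fix $x \in R$. Submultiplicativity immediately gives $v(ux) \geq v(u) + v(x) = v(x)$. Conversely, writing $x = u^{-1}(ux)$ and applying submultiplicativity again yields
\[
v(x) \geq v(u^{-1}) + v(ux) = v(ux),
\]
so $v(ux) = v(x)$. The argument for $v(xu) = v(x)$ is identical, using $x = (xu)u^{-1}$.

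There is no real obstacle here: the whole content is the observation that units must have filtration value $0$ when the filtration is non-negative, after which the result is immediate from the definition. I would present it as a short two-line proof.
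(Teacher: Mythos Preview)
Your proof is correct and essentially identical to the paper's: both first deduce $v(u)=v(u^{-1})=0$ from $0=v(1)\geq v(u)+v(u^{-1})$ and non-negativity, then sandwich $v(ux)$ using submultiplicativity applied to $ux$ and to $x=u^{-1}(ux)$, with the $xu$ case handled symmetrically.
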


\begin{proof}
By the definition of $v$, we have $0 = v(1) = v(uu^{-1}) \geq v(u) + v(u^{-1})$. As $v(u) \geq 0$ and $v(u^{-1}) \geq 0$, we must have $v(u) = 0 = v(u^{-1})$. Then
$$v(x) = v(u^{-1}ux) \geq v(u^{-1}) + v(ux) = v(ux) \geq v(u) + v(x) = v(x),$$
from which we see that $v(x) = v(ux)$; and by a symmetric argument, we also have $v(xu) = v(x)$.
\end{proof}

\textbf{We will fix the following notation for this subsection.}

\begin{notn}
Let $G$ be a $p$-valuable group equipped with the fixed $p$-valuation $\omega$, and $k$ a field of characteristic $p$. Take an ordered basis $\{g_1, \dots, g_d\}$ for $G$, and write $b_i = g_1 - 1 \in kG$ for all $1\leq i\leq d$. As in \cite{ardakovInv}, we make the following definitions:
\begin{itemize}
\item for each $\alpha\in \mathbb{N}^d$, $\mathbf{b}^\alpha$ means the (ordered) product $b_1^{\alpha_1} \dots b_d^{\alpha_d}\in kG$,
\item for each $\alpha\in \mathbb{Z}_p^d$, $\mathbf{g}^\alpha$ means the (ordered) product $g_1^{\alpha_1} \dots g_d^{\alpha_d}\in G$,
\item for each $\alpha\in \mathbb{N}^d$, $\langle \alpha, \omega(\mathbf{g}) \rangle$ means $\displaystyle \sum_{i=1}^d \alpha_i \omega(g_i)$,
\item the canonical ring homomorphism $\mathbb{Z}_p\to k$ will sometimes be left implicit, but will be denoted by $\iota$ when necessary for clarity.
\end{itemize}
\end{notn}

\begin{defn}\label{defn: valuation w}
With notation as above, let $w$ be the valuation on $kG$ defined in \cite[6.2]{ardakovInv}, given by
$$\sum_{\alpha\in \mathbb{N}^d} \lambda_\alpha \mathbf{b}^\alpha \mapsto \inf_{\alpha\in \mathbb{N}^d} \big\{\langle \alpha, \omega(\mathbf{g})\rangle\; \big| \; \lambda_\alpha \neq 0\big\}.$$
Note that, in light of this formula \cite[Corollary 6.2(b)]{ardakovInv}, and by the construction \cite[III, 2.3.3]{lazard} of $w$, it is clear that the value of $w$ is in fact independent of the ordered basis chosen. In particular, if $\varphi$ is an automorphism of $G$, then $\{g_1^\varphi, \dots, g_d^\varphi\}$ is another ordered basis of $G$; hence if $\omega$ is $\varphi$-stable (in the sense that $\omega(g^\varphi) = \omega(g)$ for all $g\in G$), then $w$ is $\varphi$-stable (in the sense that $w(x^{\hat\varphi}) = w(x)$ for all $x\in kG$, where $\hat\varphi$ here denotes the natural extension of $\varphi$ to $kG$, obtained by the universal property \cite[Lemma 2.2]{woods-prime-quotients}).
\end{defn}

We will need the following result:

\begin{lem}\label{lem: alperin binomial}
Let
$$b = b_0 + b_1 p + b_2 p^2 + \dots \in \mathbb{Z}_p,$$
$$n = n_0 + n_1 p + n_2 p^2 + \dots + n_s p^s\in \mathbb{N},$$
where all $b_i, n_i \in \{0,1,\dots,p-1\}$. Then
$$\binom{b}{n} \equiv \prod_{i=0}^s \binom{b_i}{n_i} \mod p.$$
\end{lem}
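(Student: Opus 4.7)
The plan is to prove this in two stages: first establish the classical Lucas' theorem for $b\in\mathbb{N}$, then extend by $p$-adic continuity to arbitrary $b\in\mathbb{Z}_p$.

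For the first stage, suppose $b\in\mathbb{N}$, so only finitely many $b_i$ are nonzero. Work in the polynomial ring $\mathbb{F}_p[X]$ and use the Frobenius identity $(1+X)^{p^i}\equiv 1+X^{p^i}\pmod{p}$, which holds because the intermediate binomial coefficients $\binom{p^i}{j}$ for $0<j<p^i$ are all divisible by $p$. Iterating and using $b=\sum b_i p^i$ gives
$$(1+X)^b \;\equiv\; \prod_{i\geq 0}\bigl(1+X^{p^i}\bigr)^{b_i}\pmod{p}.$$
Extracting the coefficient of $X^n$ on both sides, and using the uniqueness of the base-$p$ expansion $n=\sum n_i p^i$ to identify which products $X^{n_0}\cdot X^{n_1 p}\cdots X^{n_s p^s}$ yield $X^n$, the coefficient on the right is precisely $\prod_{i=0}^s \binom{b_i}{n_i}$, while the coefficient on the left is $\binom{b}{n}$.

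For the second stage, take $b\in\mathbb{Z}_p$ with $p$-adic expansion $b=\sum_{i\geq 0} b_i p^i$, and consider the partial sums $b^{(m)}:=\sum_{i=0}^m b_i p^i\in\mathbb{N}$, which converge to $b$ in $\mathbb{Z}_p$. The function $x\mapsto\binom{x}{n}=x(x-1)\cdots(x-n+1)/n!$ is a polynomial in $x$, hence continuous on $\mathbb{Z}_p$; and it takes values in $\mathbb{Z}_p$ since it takes integer values on the dense subset $\mathbb{N}\subseteq\mathbb{Z}_p$. In particular, $\binom{b^{(m)}}{n}\to\binom{b}{n}$ in $\mathbb{Z}_p$, so these integers eventually agree with $\binom{b}{n}$ modulo $p$.

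Now fix any $m\geq s$. The base-$p$ digits of $b^{(m)}$ in positions $0,1,\dots,s$ are exactly $b_0,\dots,b_s$, and the higher digits $b_{s+1},\dots,b_m$ are irrelevant to the right-hand side because $n_i=0$ for $i>s$ and $\binom{b_i}{0}=1$. Applying the $\mathbb{N}$-case of the lemma to $b^{(m)}$ therefore gives
$$\binom{b^{(m)}}{n}\;\equiv\;\prod_{i=0}^{s}\binom{b_i}{n_i}\pmod{p},$$
and passing to the limit $m\to\infty$ yields the desired congruence for $b$. The only real subtlety is the continuity argument in the second stage, but this is immediate once one notes that the right-hand side is independent of the tail of the expansion of $b$, so no estimate on $n!$ is needed.
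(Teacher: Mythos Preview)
Your argument is correct. The paper does not give a proof of this lemma at all: it simply cites an external reference (a short note of Alperin). Your two-stage approach --- first the classical Lucas theorem over $\mathbb{N}$ via the Frobenius identity $(1+X)^{p^i}\equiv 1+X^{p^i}$ in $\mathbb{F}_p[X]$, then extension to $\mathbb{Z}_p$ by continuity of the polynomial $x\mapsto\binom{x}{n}$ on the dense subset $\mathbb{N}\subseteq\mathbb{Z}_p$ --- is the standard self-contained argument, and all steps are sound (in particular, the observation that for $m\geq s$ the truncation $b^{(m)}$ has the same base-$p$ digits as $b$ in positions $0,\dots,s$, so the right-hand side stabilises). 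The only difference from the paper is that you have supplied a proof where the paper defers to the literature.
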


\begin{proof}
See e.g. \cite[Theorem]{alperin}.
\end{proof}

\begin{cor}\label{cor: binomial stuff}
Let $b\in \mathbb{Z}_p$, $n\in \mathbb{N}$. If
\begin{equation}\label{eqn: binomial}
\displaystyle v_p\left(\binom{b}{n}\right) = 0,
\end{equation}
then $v_p(b) \leq v_p(n)$. Further, for fixed $b\in \mathbb{Z}_p$,
$$\inf\left\{ n\in \mathbb{N} \;\middle|\; v_p\left(\binom{b}{n}\right) = 0\right\} = p^{v_p(b)}.$$
\end{cor}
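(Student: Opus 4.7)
The plan is to reduce the question to a digit-by-digit statement about base-$p$ expansions via Lemma \ref{lem: alperin binomial}. I would write $b = \sum_{i \geq 0} b_i p^i$ and $n = \sum_{i \geq 0} n_i p^i$ with digits $b_i, n_i \in \{0, 1, \dots, p-1\}$ (with only finitely many $n_i$ nonzero). Lemma \ref{lem: alperin binomial} then gives $\binom{b}{n} \equiv \prod_{i \geq 0} \binom{b_i}{n_i} \pmod p$, and hence the condition $v_p\binom{b}{n} = 0$ is equivalent to each factor $\binom{b_i}{n_i}$ being a unit mod $p$. Since $0 \leq n_i, b_i < p$, this last condition is in turn equivalent to the digit-wise inequalities $n_i \leq b_i$ holding for every $i$.

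Given this reformulation, the first claim falls out immediately. Writing $k = v_p(b)$, I have $b_0 = \dots = b_{k-1} = 0$ by definition, and the digit condition then forces $n_i \leq b_i = 0$, i.e.\ $n_i = 0$, for all $i < k$. Thus $v_p(n) \geq k = v_p(b)$.

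For the infimum I would minimise a positive $n$ subject to $n_i \leq b_i$ for all $i$. Since the digits of $n$ below position $k = v_p(b)$ are forced to be zero, the smallest admissible positive $n$ is obtained by taking $n_k = 1$ and all other digits zero, yielding $n = p^{k} = p^{v_p(b)}$. The degenerate case $b = 0$ (where every $b_i$ vanishes, no positive $n$ satisfies the constraints, and $v_p(b) = \infty$) is handled consistently with the convention $p^{\infty} = \infty$. I do not anticipate any real obstacle: the corollary is essentially a direct translation of Lucas's congruence, with the only mild subtlety being a sanity check of the $b = 0$ edge case.
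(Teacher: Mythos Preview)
Your proof is correct and follows essentially the same route as the paper: both reduce to Lemma~\ref{lem: alperin binomial} and read off the digit constraints in base~$p$. Your version states the full Lucas criterion $n_i \leq b_i$ (and handles the $b=0$ edge case explicitly), whereas the paper only records the weaker observation that $\binom{b_i}{n_i}$ vanishes when $b_i = 0 \neq n_i$, but the argument is otherwise identical.
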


\begin{proof}
From Lemma \ref{lem: alperin binomial} above, we can see that $$\binom{b}{n} \equiv 0 \mod p$$ if and only if, for some $0\leq i\leq s$, $$\binom{b_i}{n_i} = 0,$$ which happens if and only if one of the pairs $(b_i, n_i)$ for $0\leq i\leq s$ has $b_i = 0 \neq n_i$. Hence, to ensure that this does not happen, we must have $v_p(b) \leq v_p(n)$. It is clear from Lemma \ref{lem: alperin binomial} that $\displaystyle n = p^{v_p(b)}$ satisfies (\ref{eqn: binomial}), and is the least $n\in\mathbb{N}$ with $v_p(b) \leq v_p(n)$.
\end{proof}

\begin{thm}\label{thm: w(x-1) = omega(x) attempt 2}
Take any $x \in G$, and $t = \inf \omega(G)$. Then $w(x - 1) > t$ implies $\omega(x) > t$.
\end{thm}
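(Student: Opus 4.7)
The plan is to compute $w(x-1)$ and $\omega(x)$ from the same datum, namely the coordinates $\mu \in \mathbb{Z}_p^d$ of $x = \mathbf{g}^\mu$, and read off the implication by taking contrapositives. First I would dispose of $x = 1$, where $w(x-1) = \omega(x) = \infty > t$; assume henceforth $x \neq 1$. Write uniquely
\[
x = g_1^{\mu_1} g_2^{\mu_2} \cdots g_d^{\mu_d}, \qquad \mu \in \mathbb{Z}_p^d,
\]
using the ordered basis. Since each $g_i = 1 + b_i$ commutes with its own powers, the $p$-adic binomial expansion gives $g_i^{\mu_i} = \sum_{n \geq 0} \binom{\mu_i}{n} b_i^n$, and multiplying these out in the prescribed order yields
\[
x - 1 = \sum_{0 \neq \alpha \in \mathbb{N}^d} \iota\!\left(\binom{\mu}{\alpha}\right) \mathbf{b}^\alpha,
\qquad \binom{\mu}{\alpha} := \prod_{i=1}^d \binom{\mu_i}{\alpha_i}.
\]
By the definition of $w$ recalled in Definition \ref{defn: valuation w}, this gives
\[
w(x-1) \;=\; \inf\Bigl\{\, \langle \alpha, \omega(\mathbf{g})\rangle \;\Big|\; 0 \neq \alpha \in \mathbb{N}^d, \; \iota\!\left(\tbinom{\mu}{\alpha}\right) \neq 0 \Bigr\}.
\]

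The ordered basis property for $(G, \omega)$ gives $\omega(x) = \inf_{i} \{ \omega(g_i) + v_p(\mu_i) \}$. Because the basis is finite, $t = \inf \omega(G) = \min_i \omega(g_i)$ is attained, so in particular $\omega(g_i) \geq t$ for every $i$ and $\omega(x) \geq t$ for every $x \in G$; thus $\omega(x) > t$ is equivalent to $\omega(x) \neq t$.

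I prove the contrapositive: assuming $\omega(x) = t$, I produce a single $\alpha \neq 0$ witnessing $w(x-1) \leq t$. If $\omega(x) = t$, some index $i$ attains the infimum, i.e.\ $\omega(g_i) + v_p(\mu_i) = t$. Since $\omega(g_i) \geq t$ and $v_p(\mu_i) \geq 0$, this forces $\omega(g_i) = t$ and $v_p(\mu_i) = 0$, so $\mu_i \in \mathbb{Z}_p^\times$. Taking $\alpha = e_i$ (the $i$-th standard basis vector of $\mathbb{N}^d$), we get $\binom{\mu}{\alpha} = \mu_i$, which reduces to a nonzero element of $k$ under $\iota$, while $\langle \alpha, \omega(\mathbf{g})\rangle = \omega(g_i) = t$. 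Hence $w(x-1) \leq t$, contradicting the hypothesis $w(x-1) > t$.

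The argument is essentially bookkeeping: the only ``content'' is the $n = 1$ special case of Corollary \ref{cor: binomial stuff}, namely that $\binom{\mu_i}{1} = \mu_i$ is a unit modulo $p$ precisely when $v_p(\mu_i) = 0$. I do not anticipate any genuine obstacle; the main care needed is to remember that the $b_i$ do not commute in $kG$, so one must multiply the single-variable expansions $\sum_n \binom{\mu_i}{n} b_i^n$ strictly in the order dictated by the ordered basis, yielding the ordered monomials $\mathbf{b}^\alpha$ that appear in the definition of $w$.
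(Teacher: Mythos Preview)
Your proof is correct and follows essentially the same skeleton as the paper: write $x = \mathbf{g}^\mu$, expand $x-1$ binomially into the $\mathbf{b}^\alpha$-basis, and read off $w(x-1)$ from the formula in Definition~\ref{defn: valuation w}.

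The only noteworthy difference is one of economy. The paper argues directly: for each $j$ with $\omega(g_j)=t$ it exhibits the witness $\beta^{(j)} = p^{v_p(\mu_j)} e_j$, invoking the full strength of Corollary~\ref{cor: binomial stuff} (and hence Lucas' theorem, Lemma~\ref{lem: alperin binomial}) to guarantee $\binom{\mu_j}{p^{v_p(\mu_j)}}\not\equiv 0\pmod p$, and then concludes $t < w(x-1) \leq p^{v_p(\mu_j)}\omega(g_j)$. Your contrapositive formulation is slightly sharper: since $\omega(x)=t$ already forces $v_p(\mu_i)=0$ for some $i$ with $\omega(g_i)=t$, the witness $e_i$ suffices and only the trivial identity $\binom{\mu_i}{1}=\mu_i$ is needed. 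So you genuinely bypass Lemma~\ref{lem: alperin binomial} and Corollary~\ref{cor: binomial stuff} for this particular statement, at the cost of getting no extra information about $w(x-1)$ beyond the bare inequality $\leq t$. Either way the argument is complete.
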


\begin{proof}
Write $x = \mathbf{g}^\alpha$. In order to show that $\omega(\mathbf{g}^\alpha) > t$, it suffices to show that $\omega(g_j) + v_p(\alpha_j) > t$ for each $j$ (as there are only finitely many), and hence that $v_p(\alpha_j) \geq 1$ for all $j$ such that $\omega(g_j) = t$. This is equivalent to the claim that $p^{v_p(\alpha_j)} > 1$, which we will write as $p^{v_p(\alpha_j)} \omega(g_j) > t$ for all $j$ with $\omega(g_j) = t$.

Let $\beta^{(j)}$ be the $d$-tuple with $i$th entry $\delta_{ij} p^{v_p(\alpha_j)}$. Then, of course, $$\langle \beta^{(j)}, \omega(\mathbf{g})\rangle = p^{v_p(\alpha_j)} \omega(g_j),$$ and by Corollary \ref{cor: binomial stuff}, we have
$$\binom{\alpha}{\beta^{(j)}} \not\equiv 0 \mod p.$$

Now suppose that $w(\mathbf{g}^\alpha - 1) > t$. We perform binomial expansion in $kG$ to see that
\begin{align*}
\mathbf{g}^\alpha - 1 &= \prod_{1\leq i\leq d} (1+b_i)^{\alpha_i} - 1&\text{(ordered product)}\\
&= \sum_{\beta\in \mathbb{N}^d} \iota\binom{\alpha}{\beta} \mathbf{b}^\beta - 1\\
&= \sum_{\beta\neq 0} \iota\binom{\alpha}{\beta} \mathbf{b}^\beta,
\end{align*}
so that 
$$w(\mathbf{g}^\alpha - 1) = \inf\left\{ \langle \beta, \omega(\mathbf{g})\rangle \;\middle|\; \beta \neq 0, \binom{\alpha}{\beta} \not\equiv 0 \mod p \right\}.$$
So in particular, for all $j$ satisfying $\omega(g_j) = t$, we have
$$t < w(\mathbf{g}^\alpha - 1) \leq \langle \beta^{(j)}, \omega(\mathbf{g})\rangle = p^{v_p(\alpha_j)} \omega(g_j),$$
which is what we wanted to prove.
\end{proof}

%[[
%
%Now fix an $F$-stable $p$-valuation $\omega$ on $N$, and recall from \cite[6.2]{ardakovInv} and \cite[III.2.3.1.2]{lazard} the ring filtration $w: kN \to \Rinfty$ defined as the infimum of those filtrations satisfying $$w'(x-1) \geq \omega(x).$$ But it is now clear that $F$ acts on the set of such filtrations: $$(g\cdot w')(x-1) = w'(x^{\sigma(g)} - 1)$$, and that their infimum is $F$-stable.
%
%\begin{lem}\label{lem: nice filtration w}
%The filtration $w$ defined in \cite[6.2]{ardakovInv} can be chosen to be $F$-stable. \qed
%\end{lem}
%
%]]

\subsection{Constructing a suitable valuation}\label{subsection: Q', Q, filtrations}

Let $H$ be a nilpotent $p$-valuable group with centre $Z$. If $k$ is a field of characteristic $p$, and $\mathfrak{p}$ is a faithful prime ideal of $kZ$, then by \cite[Theorem 8.4]{ardakovInv}, the ideal $P := \mathfrak{p}kH$ is again a faithful prime ideal of $kH$.

\textbf{We will fix the following notation for this subsection.}

\begin{notn}
Let $G$ be a nilpotent-by-finite compact $p$-adic analytic group, with $\Delta^+ = 1$, and let $H = \fn(G)$ \cite[Definition 5.3]{woods-struct-of-G}, here a \emph{nilpotent} $p$-valuable radical, so that $\Delta = Z := Z(H)$ \cite[proof of Lemma 1.2.3(iii)]{woods-struct-of-G}. We will also write $F = G/H$.

Define $Q' = \mathbf{Q}(kZ/\mathfrak{p})$, the (classical) field of fractions of the (commutative) domain $kZ/\mathfrak{p}$, and $Q = Q' \tensor{kZ} kH$, a tensor product of $kZ$-algebras, which (as $P = \mathfrak{p}kH$) we may naturally identify with the (right) localisation of $kH/P$ with respect to $(kZ/\mathfrak{p}) \setminus \{0\}$ -- a subring of the Goldie ring of quotients $\mathbf{Q}(kH/P)$.

Suppose further that the prime ideal $\mathfrak{p}\lhd kZ$ is invariant under conjugation by elements of $G$.

Choose a crossed product decomposition
$$kG/PkG = kH/P\cp{\sigma}{\tau} F$$
which is \emph{standard} in the sense of the notation of Corollary \ref{cor: there exists an F-stable omega satisfying omegaprop}. Choose also any $\alpha\in Z^2_\sigma(F, Z((kH/P)^\times))$, and form as in \cite[Definition 5.4]{woods-prime-quotients} the central 2-cocycle twist
$$(kG/PkG)_\alpha = kH/P\cp{\sigma}{\tau\alpha} F.$$
Now the (right) divisor set $(kZ/\mathfrak{p}) \setminus \{0\}$ is $G$-stable by assumption, so by \cite[Lemma 37.7]{passmanICP}, we may define the partial quotient ring
\begin{align}\label{eqn: definition of Q*F}
R := Q\cp{\sigma}{\tau\alpha} F.
\end{align}
\end{notn}

Our aim in this subsection is to construct an appropriate filtration $f$ on the ring $R$. We will build this up in stages, following \cite{ardakovInv}. First, we define a finite set of valuations on $Q'$.

\begin{defn}\label{defn: filtration v_1}
In \cite[Theorem 7.3]{ardakovInv}, Ardakov defines a valuation on $\mathbf{Q}(kH/P)$; let $v_1$ be the restriction of this valuation to $Q'$, so that $v_1(x + \mathfrak{p}) \geq w(x)$ for all $x\in kZ$ (where $w$ is as in Definition \ref{defn: valuation w}).
\end{defn}

\begin{lem}
$\sigma$ induces a group action of $F$ on the set of valuations of $Q'$.
\end{lem}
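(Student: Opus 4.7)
The plan is to mimic the structure of Lemma \ref{lem: group action on p-valuations}, but working with valuations on the field $Q'$ rather than $p$-valuations on a group. The first step is to show that, for each $g\in F$, conjugation by $\sigma(g)$ inside $R$ preserves $Q'$, and hence induces a ring automorphism $\phi_g$ of $Q'$. Since the decomposition is standard, we may take a representative of $\sigma(g)$ in the image of $G$, which normalises $H$ (as $H\lhd G$) and hence normalises $Z = Z(H)$ (being characteristic in $H$). The hypothesis that $\mathfrak{p}$ is $G$-stable then ensures that conjugation by $\sigma(g)$ descends to an automorphism of $kZ/\mathfrak{p}$, which extends uniquely to an automorphism of its field of fractions $Q'$. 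So we may define $(g\cdot v)(x) := v(x^{\sigma(g)})$; this is a valuation since it is the composition of a valuation with a ring automorphism, and the identity coset plainly acts trivially.

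The key step is verifying associativity of the action. Inside $R$ we have
\[\sigma(g)\sigma(h) = \sigma(gh)\,\tau(g,h)\,\alpha(g,h),\]
so a direct computation gives
\[\phi_h\circ\phi_g(x) = (\tau(g,h)\alpha(g,h))^{-1}\,\phi_{gh}(x)\,(\tau(g,h)\alpha(g,h))\]
for all $x\in Q'$, and hence $(g\cdot(h\cdot v))(x) = v(\phi_h\circ\phi_g(x))$ agrees with $((gh)\cdot v)(x) = v(\phi_{gh}(x))$ provided $\tau(g,h)\alpha(g,h)$ centralises $Q'$. Here is where the particular structure of $Q'$ enters decisively: $\alpha(g,h)\in Z((kH/P)^\times)$ automatically commutes with everything in $kH/P$, so certainly with $Q'$; and because the decomposition is standard, $\tau(g,h)\in H$, and since $Z = Z(H)$, the element $\tau(g,h)$ commutes pointwise with $kZ$, hence with $kZ/\mathfrak{p}$, and hence with $Q'$. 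Thus conjugation by $\tau(g,h)\alpha(g,h)$ acts as the identity on $Q'$, and the group action axioms follow.

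The only real content is the last observation, namely that the 2-cocycle $\tau\alpha$ lands in the centraliser of $Q'$. This is not a subtle computation — it drops out of the definitions together with $Z\subseteq Z(H)$ — but it is the one place where the proof genuinely uses that we are looking at $Q'$ (rather than, say, all of $Q$), and it is therefore the step I would flag as conceptually essential rather than technically difficult.
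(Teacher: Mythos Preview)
Your proof is correct and rests on the same key fact as the paper's: elements of $H$ centralise $Q'$ because $Z = Z(H)$. The paper frames this slightly differently. Rather than defining the $F$-action directly via $\sigma$ and then verifying associativity by checking that the cocycle $\tau\alpha$ lands in the centraliser of $Q'$, the paper simply starts with the obvious $G$-action $(g\cdot u)(x) = u(g^{-1}xg)$ on valuations of $Q'$ (which is a group action for free), observes that $H$ lies in its kernel for exactly the reason you give, and hence obtains an action of $F = G/H$. The standard decomposition then identifies this with the action via $\sigma$.

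The two arguments are logically equivalent; the paper's version is a touch cleaner because it never needs to write down the cocycle relation or mention $\tau$ and $\alpha$ explicitly, whereas your version makes the role of the standard decomposition (namely $\tau(F,F)\subseteq H$) more visible. One small notational point: in your write-up you conflate $\sigma(g)$ as an automorphism with $\sigma(g)$ as a basis element of the crossed product; the paper keeps these separate, and you may wish to do the same for clarity.
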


\begin{proof}
Let $u$ be a valuation of $Q'$. $G$ acts on the set of valuations of $Q'$ as follows:
$$(g\cdot u)(x) = u(g^{-1}xg).$$
Clearly, if $g\in H$, then $g^{-1}xg = x$ (as $x\in \mathbf{Q}(kZ/\mathfrak{p})$ where $Z$ is the centre of $H$). Hence $H$ lies in the kernel of this action, and we get an action of $F$ on the set of valuations. By our choice of $\overline{F}$ as a subset of the image of $G$, this is the same as $\sigma$.
\end{proof}

Write $\{v_1, \dots, v_s\}$ for the $F$-orbit of $v_1$.

\begin{lem}\label{lem: v_i are indep}
The valuations $v_1, \dots, v_s$ are independent.
\end{lem}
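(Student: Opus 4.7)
The plan is to show that the $v_i$ are pairwise \emph{inequivalent} rank-one valuations, where two valuations $u,u'$ are equivalent if $u' = c\cdot u$ for some real $c>0$. Since all of the $v_i$ are rank one (i.e.\ real-valued), this is enough: for rank-one valuations on a field, pairwise inequivalence is precisely the condition that no valuation ring $\mathcal{O}_{v_i}$ is contained in another $\mathcal{O}_{v_j}$ (both are maximal among proper subrings of $Q'$ of the same Krull dimension), which is the standard notion of independence (Bourbaki, \emph{Commutative Algebra}, VI).

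First I would check that each $v_i$ takes values in $\mathbb{R}\cup\{\infty\}$, so is of rank one. The valuation $v_1$ is, by \autoref{defn: filtration v_1}, the restriction to $Q'$ of Ardakov's real-valued valuation on $\mathbf{Q}(kH/P)$, which in turn is built from $w$ (\autoref{defn: valuation w}), and hence from the real-valued $p$-valuation $\omega$ on $H$. The $F$-action is $(g\cdot u)(x) = u(\sigma(g)^{-1}x\sigma(g))$, which is precomposition with a ring automorphism of $Q'$ and therefore preserves the value group verbatim; so each $v_i$ is still rank-one with the same value group as $v_1$.

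Next, suppose for contradiction that $v_i$ and $v_j$ are equivalent for some $i\neq j$, so that $v_j = c\cdot v_i$ for some $c>0$. Because $\{v_1,\dots,v_s\}$ is an $F$-orbit, we may write $v_j = h\cdot v_i$ for some $h\in F$; writing $\phi$ for the automorphism of $Q'$ induced by conjugation by $\sigma(h)^{-1}$, this reads $v_i\circ\phi = c\cdot v_i$. Iterating, $v_i\circ\phi^N = c^N\cdot v_i$ for every $N\geq 1$. Since $F$ is finite, $\phi$ has finite order, so taking $N$ to be the order of $\phi$ yields $v_i = c^N\cdot v_i$, whence $c^N=1$ and, as $c>0$, $c=1$. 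Thus $v_i = v_j$, contradicting the distinctness of the members of the orbit.

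Combining the two steps, the $v_i$ are pairwise inequivalent rank-one valuations on $Q'$, and therefore independent. There is no real obstacle here; the only subtlety is the extraction of a finite-order element of $F$ to kill the possible scaling factor $c$, which is what rules out the \emph{a priori} possibility that $v_i$ and $v_j$, while distinct as functions, could nevertheless share the same valuation ring.
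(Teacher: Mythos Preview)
Your proof is correct and follows essentially the same approach as the paper's: both establish that the $v_i$ are height-one (real-valued) valuations, then use the finiteness of $F$ to rule out any nontrivial scaling between distinct orbit members, concluding independence via the standard Bourbaki result. The only minor omission is that deducing $c^N=1$ from $v_i=c^N v_i$ requires $v_i$ to be non-trivial, which the paper notes explicitly; you might add a word on this, though it is clear from the construction of $v_1$.
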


\begin{proof}
The $v_i$ are all non-trivial valuations with value groups equal to subgroups of $\mathbb{R}$ by definition. Hence, by \cite[VI.4, Proposition 7]{bourbaki}, they have height 1.

They are also pairwise inequivalent: indeed, suppose $v_i$ is equivalent to $g\cdot v_i$ for some $g\in F$. Then by \cite[VI.3, Proposition 3]{bourbaki}, there exists a positive real number $\lambda$ with $v_i = \lambda (g\cdot v_i)$, and so $v_i = \lambda^n (g^n\cdot v_i)$ (as the actions of $\lambda$ and $g$ commute) for all $n$. But $F$ is a finite group: so, taking $n = o(g)$, we get $v_i = \lambda^n v_i$. As $v_i$ is non-trivial, we must have that $\lambda^n = 1$, and so $\lambda = 1$. So we may conclude, from \cite[VI.4, Proposition 6(c)]{bourbaki}, that the valuations $v_1, \dots, v_s$ are independent.
\end{proof}

\begin{defn}\label{defn: filtration v}
Let $v$ be the filtration of $Q'$ defined by $$\displaystyle v(x) = \inf_{1\leq i\leq s} v_i(x)$$ for each $x\in Q'$.
\end{defn}

\begin{lem}\label{lem: gr Q'}
$\displaystyle \gr_v Q' \cong \bigoplus_{i=1}^s \gr_{v_i} Q'$.
\end{lem}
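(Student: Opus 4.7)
The plan is to build the natural map $\gr_v Q' \to \bigoplus_{i=1}^s \gr_{v_i} Q'$ degree by degree and prove it is an isomorphism via the approximation theorem for independent valuations.

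First I would observe the set-theoretic relations between the filtered pieces. Since $v = \inf_i v_i$ is an infimum over the \emph{finite} set $\{1,\dots,s\}$, for any $\lambda \in \mathbb{R}$ the inequality $v(x) \geq \lambda$ (resp.\ $v(x) > \lambda$) holds if and only if $v_i(x) \geq \lambda$ (resp.\ $v_i(x) > \lambda$) for \emph{every} $i$. Hence
$$F_\lambda^v Q' = \bigcap_{i=1}^s F_\lambda^{v_i} Q', \qquad F_{\lambda^+}^v Q' = \bigcap_{i=1}^s F_{\lambda^+}^{v_i} Q',$$
and the inclusions $F_\lambda^v Q' \hookrightarrow F_\lambda^{v_i} Q'$ assemble (through passage to the quotient, which is well-defined since $F_{\lambda^+}^v Q' \subseteq F_{\lambda^+}^{v_i} Q'$) into a graded ring homomorphism $\Phi : \gr_v Q' \to \bigoplus_{i=1}^s \gr_{v_i} Q'$. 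That $\Phi$ respects multiplication is immediate from the corresponding property at the level of the underlying filtered pieces.

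Injectivity of $\Phi$ is then automatic: if $x \in F_\lambda^v Q'$ is sent to $0$ in each summand, then $v_i(x) > \lambda$ for all $i$, so $v(x) > \lambda$, i.e.\ $x \in F_{\lambda^+}^v Q'$. For surjectivity, fix $\lambda \in \mathbb{R}$ and a tuple $(\bar a_1, \dots, \bar a_s)$ in $\bigoplus_i F_\lambda^{v_i} Q' / F_{\lambda^+}^{v_i} Q'$, with representatives $a_i \in Q'$ satisfying $v_i(a_i) \geq \lambda$. By \autoref{lem: v_i are indep}, the valuations $v_1, \dots, v_s$ are pairwise independent, so the classical approximation theorem for independent valuations \cite[VI.7.2, Theorem 1]{bourbaki} produces $x \in Q'$ with $v_i(x - a_i) > \lambda$ for every $i$. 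The ultrametric inequality then gives $v_i(x) \geq \min\{v_i(a_i), v_i(x - a_i)\} \geq \lambda$, so $v(x) \geq \lambda$, and by construction the image of $x$ in each summand equals $\bar a_i$.

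The only genuine subtlety is the appeal to the approximation theorem, which requires precisely the independence established in \autoref{lem: v_i are indep} together with the finiteness of the orbit $\{v_1, \dots, v_s\}$; the rest of the argument is the standard direct-sum decomposition one obtains from any infimum of independent valuations on a field, so I expect no serious obstacle beyond correctly cross-referencing the Bourbaki result.
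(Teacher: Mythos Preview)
Your proposal is correct and follows essentially the same route as the paper: construct the natural graded map, observe that its kernel in degree $\lambda$ is $\bigcap_i Q'_{v_i,\lambda^+} = Q'_{v,\lambda^+}$ so the map is injective, and then deduce surjectivity from the Approximation Theorem \cite[VI.7.2, Th\'eor\`eme 1]{bourbaki} using the independence of the $v_i$ established in the preceding lemma. Your write-up is simply a more explicit version of the paper's terse argument.
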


\begin{proof}
The natural map
$$Q'_{v,\lambda} \to \bigoplus_{i=1}^s Q'_{v_i, \lambda} / Q'_{v_i, \lambda^+}$$
clearly has kernel $\displaystyle \bigcap_{i=1}^s Q'_{v_i, \lambda^+} = Q'_{v, \lambda^+}$, giving an injective map $\displaystyle \gr_v Q' \to \bigoplus_{i=1}^s \gr_{v_i} Q'$. The surjectivity of this map now follows from the Approximation Theorem \cite[VI.7.2, Th\'eor\`eme 1]{bourbaki}, as the $v_i$ are independent by Lemma \ref{lem: v_i are indep}.
\end{proof}

Next, we will extend the $v_i$ and $v$ from $Q'$ to $Q$, as in the proof of \cite[8.6]{ardakovInv}.

%We claim: there is a finite $G$-orbit of valuations $v_1, \dots, v_s$ on $Q'$, a filtration $f$ on $Q*F$, and a positive integer $e$, such that $$\gr_f(Q*F) \cong \bigoplus_{i=1}^s (gr_{v_i} Q')[Y_1, \dots, Y_e] * F$$ -- a crossed product of a (commutative) polynomial ring in $e$ variables with $F$, where $F$ permutes the $v_i$ transitively. Furthermore, we outline how the conjugation action of $g\in F$ on the polynomial ring can be understood from the conjugation action of $g\in F$ on $N$.
%
%Let $v_1$ be the valuation on $Q'$ given in \cite[7.3]{ardakovInv}. Note that $G$ acts on the set of valuations of $Q'$ with kernel $N$ as before, as $Z$ is central in $N$ and hence $N$ commutes with $Q'$ inside $Q$.
%
%Let $\{v_1, \dots, v_s\}$ be the $G$-orbit of $v_1$. These are all non-trivial valuations with value groups equal to subgroups of $\mathbb{R}$ by definition, and hence by \cite[VI.4 prop 7]{bourbaki} they have height 1. They are also pairwise inequivalent: indeed, suppose $v_i$ is equivalent to $g\cdot v_i$ for some $g\in G$. Then by \cite[VI.3 prop 3]{bourbaki}, there exists a positive real number $\lambda$ with $v_i = \lambda (g\cdot v_i)$, and so $v_i = \lambda^n (g^n\cdot v_i)$ (as the actions of $\lambda$ and $g$ commute) for all $n$. Taking $n = |F|$, we get $g^n \in N$, and so $v_i = \lambda^n v_i$. As $v_i$ is non-trivial, we see that $\lambda^n = 1$, and so $\lambda = 1$. So we may conclude, from \cite[VI.4 prop 6(c)]{bourbaki}, that the valuations $v_1, \dots, v_s$ are independent.
%
%\subsubsection{$f_i, f$}

\begin{notn}
Continue with the notation above. Now, $H$ is $p$-valuable, and by Lemma \ref{lem: omega in standard cps}, $F$ acts on the set of $p$-valuations of $H$; hence, by Lemma \ref{lem: there exists an F-stable omega satisfying omegaprop} (or Corollary \ref{cor: there exists an F-stable omega satisfying omegaprop}), we may choose a $p$-valuation $\omega$ which is $F$-stable. Fix such an $\omega$, and construct the valuation $w$ on $kH$ from it as defined in Definition \ref{defn: valuation w}.

Let $\{y_{e+1}, \dots, y_d\}$ be an ordered basis for $Z$, and extend it to an ordered basis $\{y_1, \dots, y_d\}$ for $H$ as in Lemma \ref{lem: extending ordered bases} (noting that $Z$ is a closed isolated normal subgroup of $H$ by \cite[Lemma 8.4(a)]{ardakovInv}). For each $1\leq j\leq e$, set $c_j = y_j - 1$ inside the ring $kH/P$.

Recall from \cite[8.5]{ardakovInv} that elements of $Q$ may be written uniquely as $$\sum_{\gamma\in \mathbb{N}^e} r_\gamma \mathbf{c}^\gamma,$$ where $r_\gamma\in Q'$ and $\mathbf{c}^\gamma := c_1^{\gamma_1} \dots c_e^{\gamma_e}$, so that $Q \subseteq Q'[[c_1, \dots, c_e]]$ as a left $Q'$-module.
\end{notn}

\begin{defn}\label{defn: filtrations f_i}
For each $1\leq i\leq s$, as in \cite[proof of Theorem 8.6]{ardakovInv}, we will define the valuation $f_i: Q\to \Rinfty$ by $$f_i\left(\sum_{\gamma\in \mathbb{N}^e} r_\gamma \mathbf{c}^\gamma\right) = \inf_{\gamma\in \mathbb{N}^e} \{v_i(r_\gamma) + w(\mathbf{c}^\gamma)\}.$$
(We remark here a slight abuse of notation: the domain of $w$ is $kH$, and so $w(\mathbf{c}^\gamma)$ must be understood to mean $w(\mathbf{b}^\gamma)$, where $b_j = y_j - 1$ inside the ring $kH$ for each $1\leq j\leq e$. That is, $b_j$ is the ``obvious'' lift of $c_j$ from $kH/P$ to $kH$. This relies on the assumption that $P$ is faithful.)

Note in particular that $f_i|_{Q'} = v_i$, and $\mathrm{gr}_{f_i} Q$ is a commutative domain, again by \cite[proof of Theorem 8.6]{ardakovInv}.
\end{defn}

\begin{lem}\label{lem: F acts on valuations of Q}
$\sigma$ induces a group action of $F$ on the set of valuations of $Q$.
\end{lem}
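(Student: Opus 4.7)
The plan is to mimic the argument used for the analogous lemma on valuations of $Q'$ (the one immediately after Definition \ref{defn: filtration v_1}), but with one adjustment. For $Q'$, the action of $H \trianglelefteq G$ was trivial because $Q'$ is built from the centre $Z$; for $Q$ this is no longer the case, so well-definedness on $F$-cosets will have to be justified by another route.

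First, for each $g \in F$ and each valuation $u$ of $Q$, I would define
$$(g \cdot u)(x) \;:=\; u\bigl(\sigma(g)^{-1} x\, \sigma(g)\bigr) \qquad (x \in Q).$$
This makes sense because, by the very definition of the partial crossed product $R = Q \cp{\sigma}{\tau\alpha} F$ in (\ref{eqn: definition of Q*F}), conjugation by $\sigma(g)$ restricts to a ring automorphism of $Q$. Composing $u$ with this automorphism gives a valuation $g \cdot u$ on $Q$.

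The key observation to verify that this is a group action is the following general fact: for any valuation $v$ on any ring $S$ and any unit $u \in S^\times$,
$$v(u^{-1}xu) \;=\; v(u^{-1}) + v(x) + v(u) \;=\; v(x),$$
using the multiplicativity of valuations and $v(u) + v(u^{-1}) = v(1) = 0$. In other words, every valuation on a ring is invariant under all inner automorphisms by units, regardless of commutativity; this is the replacement for the "$Z = Z(H)$" argument used in the $Q'$ case.

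Now to compare $(gh)\cdot u$ with $g \cdot (h \cdot u)$: in the crossed product $R$, the elements $\sigma(gh)$ and $\sigma(g)\sigma(h)$ differ by left multiplication by the unit $\tau(g,h)\alpha(g,h) \in (kH/P)^\times \subseteq Q^\times$. Hence the two automorphisms of $Q$ given by conjugation by $\sigma(gh)$ and by $\sigma(g)\sigma(h)$ differ by an inner automorphism of $Q$ by a unit, and by the observation above yield equal compositions with $u$. This gives $((gh)\cdot u)(x) = (g\cdot(h\cdot u))(x)$ for every $x\in Q$, completing the verification. I do not anticipate a serious obstacle here: the only subtlety is the well-definedness on $F$, which is precisely what the "valuations are conjugation-invariant" observation handles.
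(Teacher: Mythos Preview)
Your proposal is correct and uses essentially the same idea as the paper: valuations on $Q$ are invariant under conjugation by units of $Q$, via the computation $u(n^{-1}xn) = u(n^{-1}) + u(x) + u(n) = u(x)$. The paper frames this as ``$G$ acts on valuations of $Q$ and $H\subseteq Q^\times$ lies in the kernel, so the action descends to $F$'', while you frame it as ``define the action via $\sigma$ and observe that the twisting $\tau\alpha$ lands in $Q^\times$ and hence does not affect valuations''; these are two packagings of the same argument.
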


\begin{proof}
Let $u$ be a valuation of $Q$. Again, $G$ acts on the set of valuations of $Q$ by $(g\cdot u)(x) = u(g^{-1}xg)$. Now, any $n\in H$ can be considered as an element of $Q^\times$, so that
\begin{align*}
(n\cdot u)(x) = u(n^{-1}xn) = u(n^{-1}) + u(x) + u(n) = u(x).&\qedhere
\end{align*}
\end{proof}

In the following lemma, we crucially use the fact that $\omega$ has been chosen to be $F$-stable.

\begin{lem}\label{lem: orbit of valuations}
$f_1, \dots, f_s$ is the $F$-orbit of $f_1$.
\end{lem}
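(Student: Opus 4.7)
The plan is to verify that the $F$-action on valuations of $Q$ sends each $f_i$ to another of the same shape, namely that $g\cdot f_i = f_j$ whenever $v_j = g\cdot v_i$. Taking $i=1$ and running $g$ through $F$ would then show that the $F$-orbit of $f_1$ is contained in $\{f_1,\dots,f_s\}$, while transitivity of the $F$-action on $\{v_1,\dots,v_s\}$ gives the reverse inclusion. Pairwise distinctness of the $f_i$ follows for free from the fact that $f_i|_{Q'} = v_i$, since the $v_i$ are pairwise distinct by Lemma \ref{lem: v_i are indep}.

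For the central computation, take $x = \sum_\gamma r_\gamma \mathbf{c}^\gamma \in Q$ with $r_\gamma \in Q'$, and conjugate inside $R$:
\[
\sigma(g)^{-1} x\, \sigma(g) \;=\; \sum_\gamma (g\cdot r_\gamma)\,(\mathbf{c}^g)^\gamma,
\]
where $c_k^g := \sigma(g)^{-1} c_k\sigma(g) = y_k^g - 1$, and we have used that the conjugation action of $\sigma(g)$ on the central subring $Q'$ agrees with the standing $F$-action on $Q'$ furnished by Lemma \ref{lem: omega in standard cps}. Since $g$ acts on $H$ as an automorphism preserving $Z = Z(H)$, the set $\{y_1^g,\dots,y_d^g\}$ is an ordered basis for $H$ with $\{y_{e+1}^g,\dots,y_d^g\}$ an ordered basis for $Z$. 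Because $\omega$ was chosen $F$-stable (via Corollary \ref{cor: there exists an F-stable omega satisfying omegaprop}), we have $\omega(y_k^g) = \omega(y_k)$ for all $k$, and the basis-independence noted in Definition \ref{defn: valuation w} then forces $w$ itself to be $F$-stable; in particular $w((\mathbf{b}^g)^\gamma) = w(\mathbf{b}^\gamma)$.

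The remaining step is to apply $f_i$ to the conjugated expansion. The main obstacle is showing that the defining formula for $f_i$ is insensitive to replacing the centre-compatible ordered basis $\{y_1,\dots,y_d\}$ by any other such basis (here, by $\{y_1^g,\dots,y_d^g\}$): this is the natural analogue for $f_i$ of the basis-independence of $w$, and should be obtained by rerunning the construction of \cite[proof of Theorem 8.6]{ardakovInv} with the new basis, using only that $v_i$ is a valuation on $Q'$ and that the $w$-values on the two bases coincide. Granted this, we conclude
\[
(g\cdot f_i)(x) \;=\; \inf_\gamma\{v_i(g\cdot r_\gamma) + w((\mathbf{c}^g)^\gamma)\} \;=\; \inf_\gamma\{v_j(r_\gamma) + w(\mathbf{c}^\gamma)\} \;=\; f_j(x),
\]
which is what was required.
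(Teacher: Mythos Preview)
Your argument has a real gap at precisely the step you flag as ``the remaining step'': showing that the defining formula for $f_i$ in Definition~\ref{defn: filtrations f_i} is insensitive to the choice of centre-compatible ordered basis. This is not a formality. The formula for $f_i$ is given explicitly in terms of the expansion in the fixed basis $\{c_1,\dots,c_e\}$, and for $i>1$ the valuation $v_i$ is \emph{not} the restriction of Ardakov's intrinsically defined valuation from \cite[Theorem 7.3]{ardakovInv}; so basis-independence does not follow from that construction. You would have to check that the argument of \cite[proof of Theorem 8.6]{ardakovInv} (which shows the formula is a valuation) actually yields the same function for any such basis, and you have not done this.

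The paper sidesteps the issue entirely by proving two inequalities rather than one equality. To show $f_j \leq g\cdot f_i$: since $g\cdot f_i$ is a valuation (composition of $f_i$ with an automorphism), for $x=\sum_\gamma r_\gamma \mathbf{c}^\gamma$ one has
\[
(g\cdot f_i)(x)\;\geq\;\inf_\gamma\Big\{(g\cdot f_i)(r_\gamma)+\textstyle\sum_k \gamma_k\,(g\cdot f_i)(c_k)\Big\}.
\]
On $Q'$ we have $g\cdot f_i = g\cdot v_i = v_j$ by hypothesis, and a direct computation on $c_k^g$ (expanding $y_k^g = z\mathbf{y}^\alpha$ and using \cite[Lemma 8.5(b)]{ardakovInv} together with the $F$-stability of $w$) gives $(g\cdot f_i)(c_k)\geq w(c_k)=f_j(c_k)$. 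Hence $(g\cdot f_i)(x)\geq \inf_\gamma\{v_j(r_\gamma)+w(\mathbf{c}^\gamma)\}=f_j(x)$, where the last equality is the \emph{defining} formula for $f_j$ in the standard basis. The reverse inequality comes by symmetry (swap $i\leftrightarrow j$, $g\leftrightarrow g^{-1}$ and rearrange). No change of basis is ever required: one always evaluates $f_j$ via its standard-basis formula and uses only the valuation axioms on $g\cdot f_i$.
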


\begin{proof}
Take some $g\in F$ and some $1 \leq i,j \leq s$ such that $v_j = g\cdot v_i$. We will first show that, for all $x\in Q$, we have $f_j(x) \leq g\cdot f_i(x)$. Indeed, as $f_j|_{Q'} = v_j = g\cdot v_i = g\cdot f_i|_{Q'}$, and both $f_j$ and $g\cdot f_i$ are valuations, it will suffice to show that $(w(c_k) = ) f_j(c_k) \leq g\cdot f_i(c_k)$ for each $1\leq k\leq e$.

Fix some $1\leq k\leq e$. Write $y_k^g = z\mathbf{y}^\alpha$ for some $\alpha\in \mathbb{Z}_p^e$ and $z\in Z$, so that
\begin{align*}
c_k^g = y_k^g - 1 &= z\mathbf{y}^\alpha - 1\\
&= (z-1) + z\left(\prod_{i=1}^e (1+c_i)^{\alpha_i} - 1\right)&\text{(ordered product)}\\
&= (z-1) + z\left(\sum_{\beta\neq 0} \iota\binom{\alpha}{\beta} \mathbf{c}^\beta\right),
\end{align*}
and hence
\begin{align*}
(g\cdot f_i)(c_k) &= \inf\left\{ v_i(z-1), w(\mathbf{c}^\beta) \;\middle|\; \iota\binom{\alpha}{\beta} \neq 0 \right\}
&\txt{by Definition \ref{defn: filtrations f_i}}
\\
&\geq \inf\left\{ w(z-1), w(\mathbf{c}^\beta) \;\middle|\; \iota\binom{\alpha}{\beta} \neq 0 \right\}
&\txt{by Definition \ref{defn: filtration v_1}}
\\
&= w(c_k^g),
\end{align*}
with this final equality following from \cite[Lemma 8.5(b)]{ardakovInv}. But now, as $\omega$ has been chosen to be $G$-stable, $w$ is also $G$-stable (see the remark in Definition \ref{defn: valuation w}), so that $w(c_k^g) = w(c_k)$.

Now, we have shown that, if $v_j = g\cdot v_i$ on $Q'$, then $f_j \leq g\cdot f_i$ on $Q$.

Similarly, we have $v_i = g^{-1}\cdot v_j$ on $Q'$, so $f_i \leq g^{-1}\cdot f_j$ on $Q$. But $f_i(x) \leq f_j(x^{g^{-1}})$ for all $x\in Q$ is equivalent to $f_i(y^g) \leq f_j(y)$ for all $y\in Q$ (by setting $x = y^g$). Hence we have $f_i = g\cdot f_j$ on $Q$, and we are done.
%%%%%%
\iffalse

This clearly implies that $h\cdot f_j \leq hg\cdot f_i$ for all $h\in G$.

Now fix $1 \leq i_1, i_2 \leq s$, and $g\in F$ such that $v_{i_2} = g\cdot v_{i_1}$. Consider the sequence $i_1, i_2, i_3, \dots$ of integers between $1$ and $s$ (inclusive) defined by
\begin{align*}
v_{i_2} &= g\cdot v_{i_1} &&\implies& g^{k-1}\cdot f_{i_2} &\leq g^k\cdot f_{i_1} & \text{(taking } h = g^{k-1} \text{),}\\
v_{i_3} &= g\cdot v_{i_2} &&\implies& g^{k-2}\cdot f_{i_3} &\leq g^{k-1}\cdot f_{i_2} & \text{(taking } h = g^{k-2} \text{),}\\
&\;\;\vdots&&&&\;\;\vdots\\
v_{i_k} &= g\cdot v_{i_{k-1}} &&\implies& g \cdot f_{i_k} &\leq g^2\cdot f_{i_{k-1}} & \text{(taking } h = g \text{),}\\
v_{i_{k+1}} &= g\cdot v_{i_k} &&\implies& f_{i_{k+1}} &\leq g\cdot f_{i_{k-1}} & \text{(taking } h = 1 \text{),}
\end{align*}
for any positive integer $k$. Reading off this chain of inequalities gives
$$f_{i_{k+1}} \leq g\cdot f_{i_k} \leq g^2\cdot f_{i_{k-1}} \leq \dots \leq g^{k-1}\cdot f_{i_2} \leq g^k\cdot f_{i_1}.$$
Take $k = [G:N]$, so that $g^k\in N$. Then, by Lemma \ref{lem: F acts on valuations of Q}, we see that $g^k\cdot f_{i_1} = f_{i_1}$. We also have $v_{i_{k+1}} = g^k\cdot v_{i_1} = v_{i_1}$, and hence $i_{k+1} = i_1$. Combining these, we get
$$f_{i_1} = f_{i_{k+1}} \leq g^{k-1}\cdot f_{i_2} \leq g^k\cdot f_{i_1} = f_{i_1},$$
and so
$$f_{i_1} \leq g^{k-1}\cdot f_{i_2} \leq f_{i_1}.$$
That is, $g^{k-1} \cdot f_{i_2} = f_{i_1}$, or equivalently $f_{i_2} = g\cdot f_{i_1}$ as required.
\fi
%%%%%%
\end{proof}

As in Definition \ref{defn: filtration v}:

\begin{defn}\label{defn: filtration f}
Let $f$ be the filtration of $Q$ defined by $$\displaystyle f(x) = \inf_{1\leq i\leq s} f_i(x)$$ for each $x\in Q$.
\end{defn}

We now verify that the relationship between $f$ and $v$ is the same as that between the $f_i$ and the $v_i$ (Definition \ref{defn: filtrations f_i}).

\begin{lem}\label{lem: formula for f}
Take any $x\in Q$, and write it in standard form as
$$x = \sum_{\gamma\in \mathbb{N}^e} r_\gamma \mathbf{c}^\gamma.$$
Then we have
$$f(x) = \inf_{\gamma\in \mathbb{N}^e} \{v(r_\gamma) + w(\mathbf{c}^\gamma)\}.$$
\end{lem}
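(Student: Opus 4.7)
The statement is in fact a routine manipulation of infima, not a substantive obstruction; the plan is simply to unpack the definitions and swap the order of two infima.

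First I would write
\[
f(x) \;=\; \inf_{1\leq i\leq s} f_i(x) \;=\; \inf_{1\leq i\leq s}\, \inf_{\gamma\in\mathbb{N}^e}\bigl\{v_i(r_\gamma) + w(\mathbf{c}^\gamma)\bigr\},
\]
using Definition \ref{defn: filtration f} followed by Definition \ref{defn: filtrations f_i}. The second expression involves two independent infima (over a finite set indexed by $i$ and an arbitrary set indexed by $\gamma$), so they may be interchanged:
\[
f(x) \;=\; \inf_{\gamma\in\mathbb{N}^e}\, \inf_{1\leq i\leq s}\bigl\{v_i(r_\gamma) + w(\mathbf{c}^\gamma)\bigr\}.
\]
The crucial (but immediate) observation is that $w(\mathbf{c}^\gamma)$ does not depend on the index $i$, so it can be pulled out of the inner infimum:
\[
f(x) \;=\; \inf_{\gamma\in\mathbb{N}^e}\Bigl\{\,\bigl(\inf_{1\leq i\leq s} v_i(r_\gamma)\bigr) + w(\mathbf{c}^\gamma)\Bigr\}.
\]

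Finally, applying Definition \ref{defn: filtration v}, which sets $v(r_\gamma) = \inf_{i} v_i(r_\gamma)$, gives the desired formula
\[
f(x) \;=\; \inf_{\gamma\in\mathbb{N}^e}\bigl\{v(r_\gamma) + w(\mathbf{c}^\gamma)\bigr\}.
\]
There is no genuine obstacle here: the only point that needs care is that $w(\mathbf{c}^\gamma)$ is an element of $\mathbb{R}\cup\{\infty\}$ independent of $i$, so additivity of the outer constant with respect to the inner infimum is unproblematic. Consequently the formula holds verbatim, mirroring the relationship between $f_i$ and $v_i$ in Definition \ref{defn: filtrations f_i}.
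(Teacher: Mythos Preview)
Your proof is correct and is precisely the intended argument: the paper's own proof simply reads ``Immediate from Definitions \ref{defn: filtration v}, \ref{defn: filtrations f_i} and \ref{defn: filtration f},'' and you have spelled out exactly the routine interchange of infima that makes this immediate.
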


\begin{proof}
Immediate from Definitions \ref{defn: filtration v}, \ref{defn: filtrations f_i} and \ref{defn: filtration f}.
\end{proof}

Now we can extend Lemma \ref{lem: gr Q'} to $Q$:

\begin{lem}\label{lem: gr Q}
$\displaystyle \gr_f Q \cong \bigoplus_{i=1}^s \gr_{f_i} Q$.
\end{lem}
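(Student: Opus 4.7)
The plan is to mirror the proof of Lemma \ref{lem: gr Q'}. Consider the natural graded map
$$\psi : \gr_f Q \longrightarrow \bigoplus_{i=1}^s \gr_{f_i} Q,$$
induced by the inclusions $Q_{f,\lambda} \subseteq Q_{f_i,\lambda}$ and $Q_{f,\lambda^+} \subseteq Q_{f_i,\lambda^+}$, which hold because $f = \inf_i f_i \leq f_i$ by Definition \ref{defn: filtration f}. The map sends the $f$-symbol of $x \in Q_{f,\lambda}$ to the tuple of its images in the various quotients $Q_{f_i,\lambda}/Q_{f_i,\lambda^+}$.

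Injectivity is immediate: if the image of some $x \in Q_{f,\lambda}$ vanishes, then $f_i(x) > \lambda$ for every $i$, and since the index set $\{1, \dots, s\}$ is finite, $f(x) = \min_i f_i(x) > \lambda$; equivalently, $\bigcap_i Q_{f_i,\lambda^+} = Q_{f,\lambda^+}$, so the kernel is trivial.

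For surjectivity, the substantive step, I would apply the Approximation Theorem \cite[VI.7.2, Th\'eor\`eme 1]{bourbaki} to the independent (Lemma \ref{lem: v_i are indep}) valuations $v_1, \dots, v_s$ on $Q'$, coefficient-by-coefficient in the standard form of $Q$. Given target elements $y^{(i)} = \sum_\gamma r^{(i)}_\gamma \mathbf{c}^\gamma \in Q_{f_i,\lambda}$, Lemma \ref{lem: formula for f} gives $v_i(r^{(i)}_\gamma) \geq \lambda - w(\mathbf{c}^\gamma) =: \lambda_\gamma$ for every $\gamma$; for each such $\gamma$ the Approximation Theorem produces $r_\gamma \in Q'$ with $v_i(r_\gamma - r^{(i)}_\gamma) > \lambda_\gamma$ simultaneously for all $i$, so that the candidate $x := \sum_\gamma r_\gamma \mathbf{c}^\gamma$ satisfies $f(x) \geq \lambda$ and $f_i(x - y^{(i)}) > \lambda$ for each $i$, i.e. the principal $f$-symbol of $x$ maps to the prescribed tuple.

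The main obstacle will be verifying that $x$ genuinely lies in $Q$ rather than only in the ambient formal completion $Q'[[c_1, \dots, c_e]]$ (elements of $Q$ are precisely those formal sums whose coefficients admit a common denominator in $kZ/\mathfrak{p}$, a constraint not automatically preserved by the term-by-term Approximation Theorem construction). I would resolve this by exploiting the fact that $w(\mathbf{c}^\gamma) = \sum_j \gamma_j \omega(y_j)$ with each $\omega(y_j) > (p-1)^{-1} > 0$, so the set $\{\gamma : w(\mathbf{c}^\gamma) \leq C\}$ is finite for every bound $C$; consequently only finitely many $\gamma$ are relevant modulo $Q_{f_i,\lambda^+}$ at any fixed graded degree $\lambda$. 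Starting from $y^{(1)} \in Q$ and correcting it by finitely many terms of the form $c_\gamma \mathbf{c}^\gamma$ with $c_\gamma \in Q'$ chosen via the Approximation Theorem then yields an element of $Q$ whose reductions modulo the $Q_{f_i,\lambda^+}$ agree with those of the $y^{(i)}$, completing surjectivity.
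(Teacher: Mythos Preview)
Your injectivity matches the paper. For surjectivity the paper takes a different, more structural route: rather than reapplying the Approximation Theorem at the level of $Q$, it invokes the polynomial-ring descriptions $\gr_{f_i} Q \cong (\gr_{v_i} Q')[Y_1,\dots,Y_e]$ (from \cite[proof of Theorem 8.6]{ardakovInv}, extended from $kH/P$ to $Q$ via the filt-free module machinery of \cite[I.6]{LVO}) and assembles them with Lemma~\ref{lem: gr Q'} into the commutative square
\[
\xymatrix{
(\gr_v Q')[Y_1,\dots,Y_e] \ar[r]^-\cong \ar[d] & \displaystyle\bigoplus_{i=1}^s (\gr_{v_i} Q')[Y_1,\dots,Y_e] \ar[d]^-\cong\\
\gr_f Q \ar@{^{(}->}[r] & \displaystyle\bigoplus_{i=1}^s \gr_{f_i} Q,
}
\]
forcing the bottom arrow to be an isomorphism. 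The Approximation-Theorem work is thus done once, in Lemma~\ref{lem: gr Q'}, and then transported by the polynomial structure.

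Your direct approach can be made to work, but the finiteness step as written has a gap. The implication ``$\{\gamma : w(\mathbf{c}^\gamma)\leq C\}$ finite $\Rightarrow$ only finitely many $\gamma$ relevant modulo $Q_{f_i,\lambda^+}$'' is not valid on its own: since $v_i$ takes negative values on $Q'$, a term $r^{(i)}_\gamma \mathbf{c}^\gamma$ with $w(\mathbf{c}^\gamma)$ arbitrarily large can still have $f_i$-value exactly $\lambda$ provided $v_i(r^{(i)}_\gamma)$ is correspondingly negative. What rescues the argument is precisely the common-denominator observation you record but apply to a different issue: if $y^{(i)}\in Q$ has denominator $d\in kZ/\mathfrak{p}$, then $v_j(r^{(i)}_\gamma)\geq -v_j(d)$ for every $j$ and $\gamma$ (using $v_j\geq w\geq 0$ on $kZ/\mathfrak{p}$, cf.\ Definition~\ref{defn: filtration v_1}), and this uniform lower bound, together with the discreteness of $w$ on $kH$, yields the tail estimate needed to discard all but finitely many $\gamma$. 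With that in place, your ``start from $y^{(1)}$ and correct finitely many coefficients'' plan does go through.
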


\begin{proof}
As in the proof of Lemma \ref{lem: gr Q'}, we get an injective map
$$\displaystyle \gr_f Q \to \bigoplus_{i=1}^s \gr_{f_i} Q.$$
The proof of \cite[8.6]{ardakovInv} gives a map $$(\gr_v (kZ/\mathfrak{p}))[Y_1, \dots, Y_e] \to \gr_f (kH/P)$$ and isomorphisms $$(\gr_{v_i} (kZ/\mathfrak{p}))[Y_1, \dots, Y_e] \cong \gr_{f_i} (kH/P)$$ for each $1\leq i\leq s$, in each case mapping $Y_j$ to $\gr(c_j)$ for each $1\leq j\leq e$.

Now, $\gr\, kH$ is a \emph{$\gr$-free} \cite[\S I.4.1, p. 28]{LVO} $\gr\, kZ$-module with respect to $f$ and each $f_i$, and each of these filtrations is discrete on $kH$ by construction (see \cite[Corollary 6.2 and proof of Theorem 7.3]{ardakovInv}), so by \cite[I.6.2(3)]{LVO}, $kH$ is a \emph{filt-free} $kZ$-module with respect to $f$ and each $f_i$; and by \cite[I.6.14]{LVO}, these maps extend to a map $(\gr_v Q')[Y_1, \dots, Y_e] \to \gr_f Q$ and isomorphisms $(\gr_{v_i} Q')[Y_1, \dots, Y_e] \cong \gr_{f_i} Q$ for each $i$.

Applying Lemma \ref{lem: orbit of valuations} to each $1\leq i\leq s$, we get isomorphisms
$$ (\gr_{v_i} Q')[Y_1, \dots, Y_e] \to \gr_{f_i} Q,$$
which give a commutative diagram

\centerline{
\xymatrix{
(\gr_v Q')[Y_1, \dots, Y_e] \ar[r]^-\cong \ar[d]& \displaystyle\bigoplus_{i=1}^s (\gr_{v_i} Q')[Y_1, \dots, Y_e] \ar[d]^-\cong\\
\gr_f Q \ar@{^{(}->}[r]& \displaystyle \bigoplus_{i=1}^s \gr_{f_i} Q.
}
}

Hence clearly all maps in this diagram are isomorphisms.
\end{proof}

Now we return to the ring $R = Q*F$ defined in (\ref{eqn: Q*F}).

\begin{defn}
We can extend the filtration $f$ on $Q$ to an $F$-stable filtration on $R$ by giving elements of the basis $\overline{F}$ value $0$. That is, writing $\overline{F} = \{\overline{g}_1, \dots, \overline{g}_m\}$, any element of $Q*F$ can be expressed uniquely as $\sum_{r=1}^m \overline{g}_r x_r$ for some $x_r \in Q$: the assignment
\begin{align*}
Q*F &\to \mathbb{R}\cup \{\infty\} \\
\sum_{r=1}^m \overline{g}_r x_r &\mapsto \inf_{1\leq r\leq m} \Big\{f(x_r)\Big\}
\end{align*}
is clearly a filtration on $Q*F$ whose restriction to $Q$ is just $f$. We will temporarily refer to this filtration as $\hat{f}$, though later we will drop the hat and simply call it $f$.
\end{defn}

Note that, for any real number $\lambda$,
\begin{align*}
(Q*F)_{\hat{f}, \lambda} &= \bigoplus_{i=1}^m \overline{g_i} (Q_{f,\lambda}),\\
(Q*F)_{\hat{f}, \lambda^+} &= \bigoplus_{i=1}^m \overline{g_i} (Q_{f,\lambda^+}),\\
\end{align*}
so that
\begin{align*}
\gr_{\hat{f}} (Q*F) &= \bigoplus_{\lambda\in \mathbb{R}} \left(\bigoplus_{i=1}^m \overline{g_i} (Q_{f,\lambda} / Q_{f,\lambda^+})\right)\\
&=  \bigoplus_{i=1}^m \overline{g_i}\left( \bigoplus_{\lambda\in \mathbb{R}}(Q_{f,\lambda} / Q_{f,\lambda^+})\right)
=  \bigoplus_{i=1}^m \overline{g_i}\left(\gr_{f}(Q)\right).
\end{align*}

That is, given the data of a crossed product $Q*F$ as in (\ref{eqn: Q*F}), we may view $\gr_{\hat{f}} (Q*F)$ as $\gr_f(Q) * F$ in a natural way.

We will finally record this as:

\begin{lem}\label{lem: isomorphisms of grs}
\begin{align*}
\displaystyle \gr_f(Q*F) = \gr_f(Q) * F &\cong \left(\bigoplus_{i=1}^s \gr_{f_i} Q\right) * F \\
&\cong \left( \bigoplus_{i=1}^s (\gr_{v_i} Q')[Y_1, \dots, Y_e] \right) * F,
\end{align*}
where each $\gr_{f_i} Q$ (or equivalently each $\gr_{v_i} Q'$) is a domain (see Definition \ref{defn: filtrations f_i}). $F$ permutes the $f_i$ (or equivalently the $v_i$) transitively by conjugation.\qed
\end{lem}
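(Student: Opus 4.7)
The plan is simply to assemble the pieces already established. The first equality $\gr_{\hat{f}}(Q*F) = \gr_f(Q) * F$ is exactly what was computed in the displayed formula immediately preceding the lemma statement: the filtration $\hat{f}$ was defined by giving each $\overline{g_i}\in \overline{F}$ value $0$, so each filtration piece $(Q*F)_{\hat{f},\lambda}$ splits as $\bigoplus_i \overline{g_i}\, Q_{f,\lambda}$, and hence the associated graded ring is naturally a crossed product of $\gr_f(Q)$ by $F$ (with the same $2$-cocycle data, up to replacing each structure element by its principal symbol). From now on I will drop the hat.

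Next I would invoke Lemma \ref{lem: gr Q} to get $\gr_f Q \cong \bigoplus_{i=1}^s \gr_{f_i} Q$ as rings, and then, from the proof of Lemma \ref{lem: gr Q}, the further identifications $\gr_{f_i} Q \cong (\gr_{v_i} Q')[Y_1, \dots, Y_e]$ for each $1\leq i\leq s$. Substituting these into the crossed product presentation above gives the two claimed isomorphisms
\begin{align*}
\gr_f(Q)*F &\cong \Bigl(\bigoplus_{i=1}^s \gr_{f_i} Q\Bigr) * F \\
&\cong \Bigl( \bigoplus_{i=1}^s (\gr_{v_i} Q')[Y_1, \dots, Y_e] \Bigr) * F.
\end{align*}
Strictly speaking one has to check that the $F$-action on $\gr_f(Q)$ used to form the right-hand crossed product is precisely the one induced by conjugation in $R$; but this is automatic from the definition of $\hat{f}$, since conjugation by $\overline{g_i}$ preserves the $f$-filtration on $Q$ (this is Lemma \ref{lem: orbit of valuations}, which says that $F$ permutes $\{f_1,\dots,f_s\}$, and hence preserves their infimum $f$).

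Finally, each $\gr_{f_i} Q$ is a commutative domain by Definition \ref{defn: filtrations f_i} (citing Ardakov), and transitivity of the $F$-action on $\{f_1, \dots, f_s\}$ (equivalently on $\{v_1,\dots,v_s\}$) is immediate from Lemma \ref{lem: orbit of valuations} together with the fact that $\{v_1,\dots,v_s\}$ was defined as the $F$-orbit of $v_1$. I do not anticipate any real obstacle here: the only potential subtlety is bookkeeping the crossed-product structure through the isomorphisms, which amounts to the observation above that all maps in question intertwine the $F$-action induced by conjugation in $R$.
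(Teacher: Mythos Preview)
Your proposal is correct and follows essentially the same approach as the paper: the lemma is stated there with a \qed and is meant simply to record the preceding discussion, namely the displayed computation giving $\gr_{\hat f}(Q*F)=\gr_f(Q)*F$, Lemma~\ref{lem: gr Q} (and its proof) for the direct-sum and polynomial-ring identifications, Definition~\ref{defn: filtrations f_i} for the domain assertion, and Lemma~\ref{lem: orbit of valuations} for transitivity of the $F$-action. Your added remark about compatibility of the $F$-action through the isomorphisms is a reasonable explicit check that the paper leaves implicit.
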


\textit{Proof of Theorem C.} This is Lemma \ref{lem: isomorphisms of grs}.\qed

\subsection{Automorphisms trivial on a free abelian quotient}\label{subsection: automs trivial on N/L}

\textbf{We will fix the following notation for this subsection.}

\begin{notn}
Let $H$ be a nilpotent but non-abelian $p$-valuable group with centre $Z$. Write $H'$ for its isolated derived subgroup \cite[Theorem B]{woods-struct-of-G}. Suppose we are given a closed isolated proper characteristic subgroup $L$ of $H$ which contains $H'$ and $Z$. (We will show that such an $L$ always exists in Lemma \ref{lem: H'Z is not all of H}.) Fix a $p$-valuation $\omega$ on $H$ satisfying (\omegaprop{L}) (which is possible by Corollary \ref{cor: there exists an F-stable omega satisfying omegaprop}).

Let $\{g_{m+1}, \dots, g_n\}$ be an ordered basis for $Z$. Using Lemma \ref{lem: extending ordered bases} twice, extend this to an ordered basis $\{g_{l+1}, \dots, g_n\}$ for $L$, and then extend this to an ordered basis $\{g_1, \dots, g_n\}$ for $H$. Diagrammatically:
\begin{align*}
B_H = \big\{ \underbrace{g_1, \dots, g_l}_{B_{H/L}}, \;\; \underbrace{g_{l+1}, \dots, g_m}_{B_{L/Z}}, \;\; \underbrace{g_{m+1}, \dots, g_n}_{B_Z} \big\},
\end{align*}
extending the notation of the remark after Lemma \ref{lem: extending ordered bases} in the obvious way. Here, $0 < l \leq m < n$, corresponding to the chain of subgroups $1 \lneq Z \leq L \lneq H$.

Let $k$ be a field of characteristic $p$. As before, let $\mathfrak{p}$ be a faithful prime ideal of $kZ$, so that $P := \mathfrak{p} kH$ is a faithful prime ideal of $kH$, and write $b_j = g_j - 1 \in kH/P$ for all $1\leq j\leq m$.

In this subsection, we will write:
\begin{itemize}
\item for each $\alpha\in \mathbb{N}^m$, $\mathbf{b}^\alpha$ means the (ordered) product $b_1^{\alpha_1} \dots b_m^{\alpha_m}\in kH/P$,
\item for each $\alpha\in \mathbb{Z}_p^m$, $\mathbf{g}^\alpha$ means the (ordered) product $g_1^{\alpha_1} \dots g_m^{\alpha_m}\in H$,
\item for each $\alpha\in \mathbb{N}^m$, $\langle \alpha, \omega(\mathbf{g}) \rangle$ means $\displaystyle \sum_{i=1}^m \alpha_i \omega(g_i)$.
\end{itemize}
Note the use of $m$ rather than $n$ in each case. This means that every element $x\in H$ may be written uniquely as
$$x = z\mathbf{g}^\alpha$$
for some $\alpha\in \mathbb{Z}_p^m$ and $z\in Z$; and every element $y\in kH/P$ may be written uniquely as
$$y = \sum_{\gamma\in \mathbb{N}^m} r_\gamma \mathbf{b}^\gamma$$
for some elements $r_\gamma\in kZ/\mathfrak{p}$.
\end{notn}

Recall the definitions of the filtrations $w$ on $kH$ (Definition \ref{defn: valuation w}), $v$ on $kZ/\mathfrak{p}$ (Definition \ref{defn: filtration v}) and $f$ on $kH/P$ (Definition \ref{defn: filtration f}). We will continue to abuse notation slightly for $w$, as in Definition \ref{defn: filtrations f_i}.

Recall also that, as we have chosen $\omega$ to satisfy (\omegaprop{L}), we have that $$w(b_1) = \dots = w(b_l) < w(b_r)$$ for all $r>l$.

Let $\sigma$ be an automorphism of $H$, and suppose that, when naturally extended to an automorphism of $kH$, it satisfies $\sigma(P) = P$. Hence we will consider $\sigma$ as an automorphism of $kH/P$, preserving the subgroup $H\subseteq (kH/P)^\times$.

\begin{cor}\label{cor: f implies w}
With the above notation, fix $1\leq i\leq l$. If $f(\sigma(b_i) - b_i) > f(b_i)$, then $w(\sigma(b_i) - b_i) > w(b_i)$.
\end{cor}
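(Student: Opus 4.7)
The plan is to reduce the claim to an equivalent statement about $h - 1$ for a single group element $h \in H$, and then compare the two filtrations term-by-term using the ordered-basis expansion.

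First I would set $h := \sigma(g_i) g_i^{-1} \in H$ and write
$$\sigma(b_i) - b_i \;=\; \sigma(g_i) - g_i \;=\; (h - 1) g_i$$
(both in $kH$ and modulo $P$). Since $w$ is a valuation on $kH$ with $w(g_i) = 0$, we immediately get $w(\sigma(b_i) - b_i) = w(h - 1)$. Although $f$ is only a filtration on $Q$, both $\bar g_i$ and $\bar g_i^{-1}$ are units of $f$-value $0$ (being group elements with constant term $1$ in the $\mathbf{c}^\gamma$-expansion), so applying the filtration inequality $f(ab) \geq f(a) + f(b)$ twice forces $f(\overline{\sigma(b_i) - b_i}) = f(\bar h - 1)$. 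Since $i \leq l$, property $(\omegaprop{L})$ gives $w(b_i) = f(b_i) = t$, so the corollary reduces to showing $f(\bar h - 1) > t \implies w(h - 1) > t$.

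Next I would expand $h = z' \mathbf{g}^{\alpha'}$ with $z' \in Z$ and $\alpha' \in \mathbb{Z}_p^m$. Binomial expansion yields
$$h - 1 \;=\; (z' - 1) \;+\; z' \sum_{\beta \in \mathbb{N}^m \setminus \{0\}} \iota\binom{\alpha'}{\beta}\, \mathbf{b}^\beta$$
as an expansion $\sum_\beta s_\beta \mathbf{b}^\beta$ over $kZ$, with $s_0 = z' - 1$ and $s_\beta = z' \iota\binom{\alpha'}{\beta}$ for $\beta \neq 0$. The decisive observation is that $z' \in Z \subseteq L$, and every central basis element $g_j$ (for $j > m$) satisfies $\omega(g_j) > t$ by $(\omegaprop{L})$; expanding $z'$ in these central generators therefore gives $w(z' - 1) > t$, and hence $v(\bar z' - 1) \geq w(z' - 1) > t$ by Definition \ref{defn: filtration v_1}. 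Moreover, $\bar z'$ is a unit in $kZ/\mathfrak{p}$ with inverse $\overline{z'^{-1}} \in kZ/\mathfrak{p}$, so non-negativity of $v$ on $kZ/\mathfrak{p}$ together with the filtration inequality forces $v(\bar z') = 0$. Consequently $v(\bar s_\beta) = w(s_\beta) = 0$ whenever $\iota\binom{\alpha'}{\beta} \neq 0$.

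Assembling these pieces, both filtrations take the form
\begin{align*}
w(h - 1) &= \inf\Big\{w(z' - 1),\; w(\mathbf{b}^\beta) \;:\; \beta \neq 0,\; \iota\binom{\alpha'}{\beta} \neq 0\Big\},\\
f(\bar h - 1) &= \inf\Big\{v(\bar z' - 1),\; w(\mathbf{b}^\beta) \;:\; \beta \neq 0,\; \iota\binom{\alpha'}{\beta} \neq 0\Big\},
\end{align*}
which differ only in the central term, and both central terms already exceed $t$. Hence $f(\bar h - 1) > t$ is equivalent to $w(\mathbf{b}^\beta) > t$ for every relevant $\beta$, which is in turn equivalent to $w(h - 1) > t$ --- giving more than the required implication. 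The main obstacle is the potential discrepancy between $w$ on $kZ$ and $v$ on $kZ/\mathfrak{p}$: in general one could have $v(\bar s) \gg w(s)$ when $s \in \mathfrak{p}$, but here this is sidestepped because the only $kZ$-coefficient that is not already a scalar is $s_0 = z' - 1$, whose $w$-value is controlled directly by $(\omegaprop{L})$.
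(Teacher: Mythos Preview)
Your proof is correct and takes a genuinely different, more explicit route than the paper's.

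The paper writes $\sigma(b_i) - b_i$ in standard form $\sum_\gamma r_\gamma \mathbf{b}^\gamma$ with \emph{abstract} coefficients $r_\gamma \in kZ$, and then runs a case analysis: either $w(\mathbf{b}^\gamma) > w(b_i)$ (done), or $w(\mathbf{b}^\gamma) \leq w(b_i) = t$, in which case $(\omegaprop{L})$ forces the dichotomy $w(r_\gamma) > t$ or $w(r_\gamma) = 0$; the latter makes $r_\gamma$ a unit, so $v(r_\gamma) = 0$ by Lemma~\ref{lem: basic filtration property}, contradicting $v(r_\gamma) + w(\mathbf{b}^\gamma) > t$. Finally the paper appeals to discreteness of $w$ to pass to the infimum. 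Your approach instead strips off the unit $g_i$ and \emph{explicitly identifies} the coefficients of $h-1$ as either $z'-1$ or $z'\cdot(\text{scalar in }k)$. This makes the comparison of $f$ and $w$ transparent: the two infimum formulas share the set $\{w(\mathbf{b}^\beta)\}$ and differ only in the constant-term contribution, which exceeds $t$ on both sides. A pleasant by-product is that you never need discreteness of $w$: from $f(\bar h-1) > t$ you get $\inf_\beta w(\mathbf{b}^\beta) \geq f(\bar h-1) > t$ directly, and then $w(h-1) = \min\{w(z'-1),\,\inf_\beta w(\mathbf{b}^\beta)\} > t$.

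Two small points to tighten. First, Definition~\ref{defn: filtration v_1} only gives $v_1(x+\mathfrak{p}) \geq w(x)$; to get $v(\bar z'-1) \geq w(z'-1)$ you should note that each $v_i$ is an $F$-translate of $v_1$ and that $w$ is $F$-stable (by the choice of $\omega$), so $v_i(\bar s) = v_1(\bar s^{g}) \geq w(s^g) = w(s)$ for every $s \in kZ$. Second, your displayed formula for $w(h-1)$ as an infimum over the $\mathbf{b}^\beta$-expansion is not just the filtration inequality; it uses that expanding each $s_\beta \in kZ$ in the central basis elements and commuting them past $\mathbf{b}^\beta$ recovers the full ordered-basis expansion of $h-1$ in $kH$ (this is essentially \cite[Lemma 8.5(b)]{ardakovInv}). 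Both are one-line additions.
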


\begin{proof}
Write in standard form
\begin{align}
\sigma(b_i) - b_i = \sum_{\gamma\in \mathbb{N}^m} r_\gamma \mathbf{b}^\gamma,\nonumber
\end{align}
for some $r_\gamma\in kZ$, and suppose that $f(\sigma(b_i) - b_i) > f(b_i)$. That is, by Lemma \ref{lem: formula for f},
$$v(r_\gamma) + w(\mathbf{b}^\gamma) > w(b_i)$$
for each fixed $\gamma\in \mathbb{N}^m$.

We will show that $w(r_\gamma) + w(\mathbf{b}^\gamma) > w(b_i)$ for each $\gamma$. We deal with two cases.

\textbf{Case 1:} $w(\mathbf{b}^\gamma) > w(b_i)$. Then, as $w$ takes non-negative values on $kH$, we are already done.

\textbf{Case 2:} $w(\mathbf{b}^\gamma) \leq w(b_i)$. Then, by (\omegaprop{L}), we have either $w(r_\gamma) > w(b_i)$ or $w(r_\gamma) = 0$. In the former case, we are done automatically, so assume we are in the latter case and $w(r_\gamma) = 0$. Then, by \cite[6.2]{ardakovInv}, $r_\gamma$ must be a unit in $kZ$, and so $f(r_\gamma) = 0$ by Lemma \ref{lem: basic filtration property}, a contradiction.

Hence $w(r_\gamma) + w(\mathbf{b}^\gamma) > w(b_i)$ for all $\gamma\in \mathbb{N}^m$. But, as $w$ is discrete by \cite[6.2]{ardakovInv}, we may now take the infimum over all $\gamma\in \mathbb{N}^m$, and the inequality remains strict.
\end{proof}

Let $\sigma$ be an automorphism of $H$, and recall that $H/L$ is a free abelian pro-$p$ group of rank $l$. Choose a basis $e_1, \dots, e_l$ for $\mathbb{Z}_p^l$; then the map $g_iL \mapsto e_i$ for $1\leq i\leq l$ is an isomorphism $j: H/L \to \mathbb{Z}_p^l$. As $L$ is characteristic in $H$ by assumption, $\sigma$ induces an automorphism of $H/L$, which gives a matrix $M_\sigma\in GL_l(\mathbb{Z}_p)$ under this isomorphism.

Write $$\overline{\omega}: H/L\to \Rinfty$$ for the quotient $p$-valuation on $H/L$ induced by $\omega$, i.e. $$\overline{\omega}(xL) = \sup_{\ell\in L} \{\omega(x\ell)\}$$ -- note that this is just the $(t,p)$-filtration (Definition \ref{defn: (t,p)-filtration}), as we have chosen $\omega$ to satisfy (\omegaprop{L}). Then write $$\Omega: \mathbb{Z}_p^l\to \Rinfty$$ for the map $\Omega = \overline{\omega}\circ j^{-1}$, the $(t,p)$-filtration on $\mathbb{Z}_p^l$ corresponding to $\overline{\omega}$ under the isomorphism $j$.

\begin{rk}
If $x\in \mathbb{Z}_p^l$ has $\Omega(x) \geq t+1$, then $x\in p\mathbb{Z}_p^l$, by the definition of the $(t,p)$-filtration.
\end{rk}

We will write $\Gamma(1) = 1+pGL_l(\mathbb{Z}_p)$ for the \emph{first congruence subgroup} of $GL_l(\mathbb{Z}_p)$, the open subgroup of $GL_l(\mathbb{Z}_p)$ whose elements are congruent to the identity element modulo $p$.

\begin{cor}\label{cor: if sigma increases f part 1}
With the above notation, if $f(\sigma(b_i) - b_i) > f(b_i)$ for all $1\leq i\leq l$, then $M_\sigma\in\Gamma(1)$.
\end{cor}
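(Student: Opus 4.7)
The plan is to chain together the machinery built up in the preceding sections: Corollary \ref{cor: f implies w} translates the hypothesis on $f$ into a statement about $w$, Theorem \ref{thm: w(x-1) = omega(x) attempt 2} converts that into a statement about $\omega$, and Theorem \ref{thm: M is in Gamma_1} converts a statement about $\omega$ into the conclusion about $M_\sigma$. Since $\omega$ has been chosen to satisfy (\omegaprop{L}), hypotheses (ii) and (iii) of Theorem \ref{thm: M is in Gamma_1} come for free, and the only real work is to verify hypothesis (i).

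First, fix $i$ with $1 \le i \le l$ and set $t = \omega(g_1) = \dots = \omega(g_l)$ as provided by (\omegaprop{L}). The hypothesis $f(\sigma(b_i) - b_i) > f(b_i)$ together with Corollary \ref{cor: f implies w} gives $w(\sigma(b_i) - b_i) > w(b_i) = t$. Writing $\sigma(b_i) - b_i = \sigma(g_i) - g_i = g_i\bigl(g_i^{-1}\sigma(g_i) - 1\bigr)$ and using that $g_i \in H \subseteq (kH)^\times$ has $w(g_i) = 0$, Lemma \ref{lem: basic filtration property} gives
$$w\bigl(g_i^{-1}\sigma(g_i) - 1\bigr) > t.$$
Now apply Theorem \ref{thm: w(x-1) = omega(x) attempt 2} to the group element $x := g_i^{-1}\sigma(g_i) \in H$ (noting that $t = \inf \omega(H)$ by (\omegaprop{L})) to conclude $\omega\bigl(g_i^{-1}\sigma(g_i)\bigr) > t$.

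To convert this into the form required by hypothesis (i) of Theorem \ref{thm: M is in Gamma_1}, observe that $\omega$ is invariant under conjugation: for any $x, y \in H$ with $\omega(y) > 0$, the commutator axiom gives $\omega(xyx^{-1}) \ge \min\{\omega(y),\omega([x,y])\} = \omega(y)$, and the reverse inequality follows by a symmetric argument. Hence
$$\omega\bigl(\sigma(g_i)g_i^{-1}\bigr) = \omega\bigl(g_i \cdot g_i^{-1}\sigma(g_i) \cdot g_i^{-1}\bigr) = \omega\bigl(g_i^{-1}\sigma(g_i)\bigr) > t = \omega(g_i),$$
which is precisely (\ref{eqn: sigma-1 increases omega}) for $x = g_i$.

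Since $L$ is characteristic, $\sigma$ preserves $L$, so Theorem \ref{thm: M is in Gamma_1} applies with this $\omega$ (hypotheses (ii), (iii) being (\omegaprop{L})), yielding $M_\sigma - 1 \in pM_l(\mathbb{Z}_p)$, i.e.\ $M_\sigma \in \Gamma(1)$. I do not anticipate a serious obstacle here: the statement is essentially a bookkeeping exercise chaining together three results that have been set up precisely to be combined in this way, and the only subtlety is the small manipulation turning $\sigma(g_i) - g_i$ into $g_i^{-1}\sigma(g_i) - 1$ so that Theorem \ref{thm: w(x-1) = omega(x) attempt 2} can be applied.
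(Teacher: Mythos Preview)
Your proof is correct and follows essentially the same approach as the paper, which also chains Corollary~\ref{cor: f implies w}, Theorem~\ref{thm: w(x-1) = omega(x) attempt 2}, and Theorem~\ref{thm: M is in Gamma_1}. The only cosmetic difference is that you factor $\sigma(g_i)-g_i = g_i(g_i^{-1}\sigma(g_i)-1)$ and then need a conjugation-invariance step, whereas factoring on the right as $(\sigma(g_i)g_i^{-1}-1)g_i$ gives $\omega(\sigma(g_i)g_i^{-1}) > t$ directly; the paper's version is terser and leaves this manipulation implicit.
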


\begin{proof}
We have, for all $1\leq i\leq l$,
\begin{align*}
f(\sigma(b_i) - b_i) > f(b_i) &\implies w(\sigma(b_i) - b_i) > w(b_i) &\txt{by Corollary \ref{cor: f implies w}},\\
&\implies \omega(\sigma(g_i)g_i^{-1}) > \omega(g_i) &\txt{by Theorem \ref{thm: w(x-1) = omega(x) attempt 2}},
\end{align*}
-- which is condition (\ref{eqn: sigma-1 increases omega}). Now we may invoke Theorem \ref{thm: M is in Gamma_1}.
%and so, by the above remark, we have $M_\sigma(e_i) - e_i = py_i$ for some vector $y_i\in \mathbb{Z}_p^d$. That is, writing $Y\in M_d(\mathbb{Z}_p)$ for the matrix whose columns are $(y_1, \dots, y_d)$, we have $$M_\sigma = 1+pY,$$ and so $M_\sigma\in \Gamma_1$.
\end{proof}

\begin{cor}\label{cor: if sigma increases f, then sigma is in Gamma_1}
Suppose now further that $\sigma$ is an automorphism of $H$ of \emph{finite order}. If $p > 2$ and $f(\sigma(b_i) - b_i) > f(b_i)$ for all $1\leq i\leq l$, then $\sigma$ induces the identity automorphism on $H/L$.\qed
\end{cor}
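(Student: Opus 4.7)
The strategy is a two-line reduction using the previous corollary plus a standard torsion-freeness fact about congruence subgroups.

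First I would invoke Corollary \ref{cor: if sigma increases f part 1} directly: the hypothesis that $f(\sigma(b_i) - b_i) > f(b_i)$ for all $1 \leq i \leq l$ is exactly what is needed to conclude $M_\sigma \in \Gamma(1) = 1 + pM_l(\mathbb{Z}_p)$. So the problem reduces to showing that, when $p>2$, the first congruence subgroup $\Gamma(1)$ of $GL_l(\mathbb{Z}_p)$ contains no non-trivial elements of finite order; then, since $\sigma$ has finite order as an automorphism of $H$ and the assignment $\sigma \mapsto M_\sigma$ is a homomorphism, $M_\sigma$ has finite order in $\Gamma(1)$, forcing $M_\sigma = 1$, which says precisely that $\sigma$ induces the identity on $H/L$.

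The torsion-freeness of $\Gamma(1)$ for $p>2$ is classical: one way is to observe that $\Gamma(1)$ is a uniform pro-$p$ group (in the sense of \cite{DDMS}), in particular torsion-free. Alternatively, and more in keeping with this paper's techniques, one can equip $\Gamma(1)$ with the $p$-valuation $\omega(X) = k$ when $X \in \Gamma(k) \setminus \Gamma(k+1)$; any $p$-valued group is automatically torsion-free since $\omega(x^p) = \omega(x) + 1$ prevents any element from satisfying $x^{p^n} = 1$. Either route gives what we need.

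The only step that requires any thought at all is the appeal to $p>2$: for $p=2$ the congruence subgroup $1 + 2M_l(\mathbb{Z}_2)$ does contain $2$-torsion (e.g.\ $-1$ when $l \geq 1$), so the argument genuinely breaks down there, which is why the hypothesis is imposed. Everything else is an immediate citation of Corollary \ref{cor: if sigma increases f part 1} and a standard fact, so there is essentially no obstacle beyond recording the reference.
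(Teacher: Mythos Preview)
Your proposal is correct and follows essentially the same approach as the paper: invoke Corollary~\ref{cor: if sigma increases f part 1} to get $M_\sigma \in \Gamma(1)$, then use that $\Gamma(1)$ is $p$-valuable (hence torsion-free) for $p>2$ via \cite[Theorem 5.2]{DDMS} to conclude $M_\sigma = 1$. Your remark on why $p>2$ is needed also matches the paper's subsequent discussion.
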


\begin{proof}
We have shown that $M_{\sigma} \in \Gamma(1)$, which is a $p$-valuable (hence torsion-free) group for $p > 2$ by \cite[Theorem 5.2]{DDMS}; and if $\sigma$ has finite order, then $M_\sigma$ must have finite order. So $M_\sigma$ is the identity map.
\end{proof}

\textit{Proof of Theorem D.} This now follows from Corollaries \ref{cor: if sigma increases f part 1} and \ref{cor: if sigma increases f, then sigma is in Gamma_1}.\qed

\begin{rk}
When $p = 2$, $\Gamma(1)$ is no longer $p$-valuable.
\end{rk}

\begin{ex}
Let $p = 2$, and let $$H = \overline{\langle x, y, z \;|\; [x,y] = z, [x,z]=1, [y,z]=1\rangle}$$ be the ($2$-valuable) $\mathbb{Z}_2$-Heisenberg group. Let $\sigma$ be the automorphism sending $x$ to $x^{-1}$, $y$ to $y^{-1}$ and $z$ to $z$. Take $L = \overline{\langle z \rangle}$, and $P = 0$.

Write $X = x-1\in kH/P$, and likewise $Y = y-1$ and $Z = z-1$. Now,
$$\sigma(X) = \sigma(x) - 1 = x^{-1} - 1 = (1+X)^{-1} - 1 = -X + X^2 - X^3 + \dots,$$
and so $\sigma(X) - X = X^2 - X^3 + \dots$ (as $\mathrm{char}\; k = 2$). Hence $f(\sigma(X) - X) = f(X^2) > f(X)$; but
$$M_\sigma = \begin{pmatrix} -1&0 \\ 0&-1 \end{pmatrix} \in \Gamma(1, GL_2(\mathbb{Z}_2)),$$
and in particular $M_\sigma \neq 1$.
\end{ex}

\section{Extending prime ideals from $\fn(G)$}\label{section: extending primes}

\subsection{X-inner automorphisms}

\begin{defn}\label{defn: X-inner stuff}
We recall the notation of \cite[\S 5]{woods-prime-quotients}: given $R$ a ring, $G$ a group and a fixed crossed product $S$ of $R$ by $G$, we will sometimes write the structure explicitly as
$$S = R\cp{\sigma}{\tau} G,$$
where $\sigma: G\to \Aut(R)$ is the \emph{action} and $\tau: G\times G\to R^\times$ the \emph{twisting}.

Furthermore, we say that an automorphism $\varphi\in \Aut(R)$ is \emph{X-inner} if there exist nonzero elements $a,b,c,d\in R$ such that, for all $x\in R$,
$$axb = cx^\varphi d,$$
where $x^\varphi$ denotes the image of $x$ under $\varphi$. Write $\mathrm{Xinn}(R)$ for the subgroup of $\Aut(R)$ consisting of X-inner automorphisms; and, given a crossed product as in the previous paragraph, we will write $\Xinn{S}{R}{G} = \sigma^{-1}(\sigma(G) \cap \mathrm{Xinn}(R))$.
\end{defn}

\begin{lem}\label{lem: X-inner stuff}
$R$ a prime ring and $R*G$ a crossed product. Let $G_\inn := \Xinn{R*G}{R}{G}$.
\begin{itemize}
\item[(i)] If $\sigma\in\Aut(R)$ is X-inner, then $\sigma$ is trivial on the centre of $R$.
\item[(ii)] If $H$ is a subgroup of $G$ containing $G_\inn$, and $R*H$ is a prime ring, then $R*G$ is a prime ring.
\end{itemize}
\end{lem}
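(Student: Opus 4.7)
Part (i) should fall out of a one-line computation. Suppose $\sigma \in \Aut(R)$ is X-inner, witnessed by $axb = cx^\sigma d$ for all $x \in R$, and let $z \in Z(R)$. Substituting $x \mapsto xz$ on both sides and exploiting centrality of $z$ (and hence of $z^\sigma$) gives $cx^\sigma(z^\sigma - z)d = 0$ for every $x \in R$. Since $\sigma$ is an automorphism, $x^\sigma$ ranges over all of $R$, so $cR(z^\sigma - z)d = 0$. Primeness of $R$ and $c \ne 0$ force $(z^\sigma - z)d = 0$; centrality of $z^\sigma - z$ lets me rewrite this as $(z^\sigma - z)Rd = 0$, and primeness together with $d \ne 0$ yields $z^\sigma = z$.

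Part (ii) is the substantive statement and I would reduce it to the fact that every nonzero ideal $A$ of $R*G$ has nonzero intersection with $R*H$. Granting this claim, if $A, B \lhd R*G$ are nonzero ideals then $A \cap R*H$ and $B \cap R*H$ are nonzero ideals of $R*H$, and their product lies in $AB \cap R*H$; so $AB = 0$ contradicts primeness of $R*H$, proving $R*G$ is prime.

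To establish the claim, note first that $\mathrm{Xinn}(R) \lhd \Aut(R)$, hence $G_\inn \lhd G$, and every $g \in G \setminus H \subseteq G \setminus G_\inn$ acts on $R$ by an X-outer automorphism. I would then run the standard minimal-support argument from Passman's theory of X-inner automorphisms (as in \emph{Infinite Crossed Products}): pick $0 \ne \alpha \in A$ with $|\supp(\alpha)|$ minimal, write $\alpha = \sum_{g} a_g \bar g$, and after left-multiplying by $\bar h^{-1}$ for some $h \in \supp(\alpha)$ assume $1 \in \supp(\alpha)$; if some other $g \in \supp(\alpha)$ lies outside $H$, then X-outerness of $\sigma(g)$ provides $r \in R$ with $ra_g \ne a_g r^{\sigma(g)}$ up to the twisting constants, and the element $r\alpha - \alpha r$ is a nonzero element of $A$ with strictly smaller support, contradicting minimality. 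Thus $\supp(\alpha) \subseteq H$, i.e. $0 \ne \alpha \in A \cap R*H$.

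The main obstacle is executing the minimal-support reduction cleanly: one must track the twisting $\tau$ carefully so that the difference $r\alpha - \alpha r$ really vanishes on the coefficient of $\bar 1$ while remaining nonzero on the coefficient of $\bar g$, and this is exactly where X-outerness of $\sigma(g)$ (as opposed to merely non-inner-ness) is needed. Everything after that — the two-sided ideal intersection argument — is formal.
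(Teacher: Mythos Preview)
Your argument for (i) is correct and more elementary than the paper's, which simply invokes the description of X-inner automorphisms as restrictions of inner automorphisms of the Martindale symmetric quotient ring $Q_s(R)$, together with the fact that $Z(R)$ remains central in $Q_s(R)$.

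For (ii), the paper again just cites Passman (\emph{Infinite Crossed Products}, Corollary~12.6) for the statement that every nonzero ideal of $R*G$ meets $R*G_{\inn}$ nontrivially, and then observes that $G_{\inn} \leq H$ finishes the job. Your plan to reprove this via a minimal-support argument is the right idea and is essentially how Passman's result is established, but the specific element you propose does not work: the $\bar 1$-coefficient of $r\alpha - \alpha r$ is $ra_1 - a_1 r$, which has no reason to vanish, so you have not reduced the support. The standard fix is to use instead
\[
\beta \;=\; a_1\, r\, \alpha \;-\; \alpha\, r\, a_1 \;\in\; A,
\]
whose $\bar 1$-coefficient is $a_1 r a_1 - a_1 r a_1 = 0$. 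Minimality then forces $\beta = 0$, and reading off the $\bar g$-coefficient gives
\[
a_1\, x\, a_g \;=\; a_g\, x^{\sigma(g)}\, \sigma(g)(a_1)\qquad\text{for all } x\in R,
\]
which is exactly the X-inner relation for $\sigma(g)$ with witnesses $a_1, a_g, a_g, \sigma(g)(a_1)$, all nonzero. With this correction your sketch goes through cleanly. (The normality of $G_{\inn}$ in $G$ that you mention is true but not actually needed for the argument.)
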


\begin{proof}
$ $

\begin{itemize}
\item[(i)] This follows from the description of X-inner automorphisms of $R$ as restrictions of inner automorphisms of the Martindale symmetric ring of quotients $\mathrm{Q}_{s}(R)$, and the fact that $Z(R)$ stays central in $\mathrm{Q}_{s}(R)$: see \cite[\S 12]{passmanICP} for details.
\item[(ii)] This follows from \cite[Corollary 12.6]{passmanICP}: if $I$ is a nonzero ideal of $R*G$, then $I\cap R*G_\inn$ is nonzero, and hence $I\cap R*H$ is nonzero.\qedhere
\end{itemize}
\end{proof}

\subsection{Properties of $\fn(G)$}

We prove here some important facts about the group $\fn(G)$ (defined in \cite[Theorem C]{woods-struct-of-G}).

\begin{lem}\label{lem: subgroup of G acting trivially on H/H' is FN_p}
Let $G$ be a nilpotent-by-finite compact $p$-adic analytic group with $\Delta^+ = 1$. Let $H = \fn(G)$, and write $$K := K_G(H) = \{x\in G \;|\; [H,x]\leq H'\},$$ where $H'$ denotes the isolated derived subgroup of $H$ \cite[Theorem B]{woods-struct-of-G}. Then $K = H$.
\end{lem}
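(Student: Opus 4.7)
The containment $H \leq K$ is immediate from $[H,H] \leq H'$. For the reverse, $K$ is the kernel of the conjugation action $G \to \Aut(H/H')$ on the abelian group $H/H' \cong \mathbb{Z}_p^l$, hence normal in $G$; since $H$ is open in $G$, $K/H$ is a finite subgroup of $G/H$. The plan is to show that $K$ is itself nilpotent $p$-valuable, whence $K \leq \fn(G) = H$ by the maximality characterising $\fn(G)$ among normal subgroups of this form (bearing in mind that any finite characteristic subgroup of $K$ is also characteristic in $G$, hence contained in $\Delta^+(G) = 1$).

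The key observation is that the conjugation map $K \to \Aut(H)$ factors through the kernel $U$ of the restriction $\Aut(H) \to \Aut(H/H')$. In terms of Lie algebras, $U$ consists of automorphisms $\phi$ of $\mathfrak{h}$ with $(\phi - 1)(\mathfrak{h}) \subseteq [\mathfrak{h}, \mathfrak{h}]$; induction along the derived series then gives $(\phi - 1)(\mathfrak{h}^{(i)}) \subseteq \mathfrak{h}^{(i+1)}$, so $\phi - 1$ is nilpotent and $\phi$ is unipotent. Thus $U$ is a unipotent algebraic group over $\mathbb{Q}_p$, and (for $p > 2$) its $\mathbb{Z}_p$-points form a torsion-free pro-$p$ group; in particular $K/C_K(H) \hookrightarrow U$ is torsion-free pro-$p$.

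Next I would analyse the centraliser $C_G(H)$. It is characteristic in $G$, intersects $H$ in $Z(H) \cong \mathbb{Z}_p^r$, and $C_G(H)/Z(H)$ embeds into the finite group $G/H$. So $C_G(H)$ is a central extension of a finite group by the torsion-free abelian $Z(H)$; its torsion subgroup is finite and characteristic in $G$, hence trivial by $\Delta^+(G) = 1$, so $C_G(H)$ is torsion-free. Unique $\ell$-th root extraction in $Z(H)$ for each prime $\ell \neq p$ then forces $C_G(H)/Z(H)$, and hence $C_K(H)/Z(H)$, to be a $p$-group.

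Assembling the pieces, $K/H$ is an extension of $K/HC_K(H)$ (a subquotient of the pro-$p$ group $K/C_K(H)$) by $HC_K(H)/H \cong C_K(H)/Z(H)$ (a finite $p$-group), so $K/H$ is a finite $p$-group and $K$ is pro-$p$. $K$ is also torsion-free: any finite-order $\sigma \in K$ would induce a unipotent automorphism $\mathrm{Ad}(\sigma) \in U$ of finite order, hence the identity by characteristic-zero unipotence, placing $\sigma \in C_G(H)$ which is torsion-free. Since $K$ has the nilpotent Lie algebra $\mathfrak{h}$, it is nilpotent, and therefore nilpotent $p$-valuable, whence $K \leq \fn(G) = H$. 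The hardest link in the chain is showing that $C_G(H)/Z(H)$ is a $p$-group: this is where torsion-freeness (via $\Delta^+(G) = 1$) combines essentially with unique prime-to-$p$ root extraction in the $p$-valuable abelian group $Z(H)$.
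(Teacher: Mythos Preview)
Your overall strategy---show that $K$ is nilpotent $p$-valuable and invoke the maximality of $\fn(G)$---is exactly the paper's, but two steps do not go through as written.

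\textbf{The derived-series induction fails.} From $(\phi-1)(\mathfrak{h})\subseteq[\mathfrak{h},\mathfrak{h}]$ you cannot conclude $(\phi-1)(\mathfrak{h}^{(i)})\subseteq\mathfrak{h}^{(i+1)}$. In the inductive step, for $x\in\mathfrak{h}^{(i)}$ and $b=(\phi-1)(y)\in\mathfrak{h}^{(i+1)}$ you get terms like $[x,b]\in[\mathfrak{h}^{(i)},\mathfrak{h}^{(i+1)}]$, and this is \emph{not} contained in $\mathfrak{h}^{(i+2)}=[\mathfrak{h}^{(i+1)},\mathfrak{h}^{(i+1)}]$ in general (take a free nilpotent Lie algebra of class~$3$: there $\mathfrak{h}^{(2)}=0$ but $[\mathfrak{h},\mathfrak{h}^{(1)}]=\mathfrak{h}_3\neq 0$). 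The correct filtration is the \emph{lower central series}: one checks easily that $(\phi-1)(\mathfrak{h}_i)\subseteq\mathfrak{h}_{i+1}$, and this is what the paper uses. With a basis adapted to the flag $\mathfrak{h}\supsetneq\mathfrak{h}_2\supsetneq\cdots$, the adjoint image of $K$ then lands in the strictly upper-triangular group $\mathcal{U}\leq GL_n(\mathbb{Z}_p)$, which is visibly nilpotent and torsion-free.

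\textbf{``Nilpotent Lie algebra $\Rightarrow$ nilpotent group'' is not justified.} For an arbitrary compact $p$-adic analytic group this only gives \emph{virtual} nilpotence; you need an argument at the group level. The paper's route is cleaner than your $C_G(H)$ analysis: any $k\in K$ centralising $H$ is centralised by the open subgroup $H$, hence lies in $\Delta$, and $\Delta=Z(H)$ here. So $\ker\mathrm{Ad}=Z(H)$ on the nose, and $K$ is a central extension of the abelian group $Z(H)$ by the nilpotent group $\mathrm{Ad}(K)\leq\mathcal{U}$, hence nilpotent. Your argument only shows $C_G(H)/Z(H)$ is a finite $p$-group, stopping short of triviality, and then tries to get nilpotence of $K$ from its Lie algebra---which is circular, since you do not yet know $K$ is $p$-valuable.

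A smaller point: your passage from $\Aut(H)$ to $\Aut(\mathfrak{h})$ presumes $H$ is $p$-saturable. The paper treats that case first and then reduces the general case by replacing $H$ with $\Sat H$.
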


\begin{proof}
Firstly, note that $K$ clearly contains $H$, by definition of $H'$.

Secondly, suppose that $H$ is $p$-saturable. By the same argument as in \cite[Lemma 4.3]{woods-struct-of-G}, $K$ acts nilpotently on $H$, and so $K$ acts nilpotently on the Lie algebra $\mathfrak{h}$ associated to $H$ under Lazard's isomorphism of categories \cite{lazard}. That is, we get a group representation $\mathrm{Ad}: K\to \Aut(\mathfrak{h})$, and $(\mathrm{Ad}(k) - 1)(\mathfrak{h}_i) \subseteq \mathfrak{h}_{i+1}$ for each $k\in K$ and each $i$. (Here, $\mathfrak{h}_i$ denotes the $i$th term in the lower central series for $\mathfrak{h}$.)

Choosing a basis for $\mathfrak{h}$ adapted to the flag
$$\mathfrak{h} \supsetneq \mathfrak{h}_2 \supsetneq \dots \supsetneq \mathfrak{h}_r = 0,$$
we see that $\mathrm{Ad}$ is a representation of $K$ for which $\mathrm{Ad}(k) - 1$ is strictly upper triangular for each $k\in K$; in other words, $\mathrm{Ad}: K\to \mathcal{U}$, where $\mathcal{U}$ is a closed subgroup of some $GL_n(\mathbb{Z}_p)$ consisting of unipotent upper-triangular matrices. Hence the image $\mathrm{Ad}(K)$ is nilpotent and torsion-free.

Furthermore, $\ker \mathrm{Ad}$ is the subgroup of $K$ consisting of those elements $k$ which centralise $\mathfrak{h}$, and therefore centralise $H$. This clearly contains $Z(H)$. On the other hand, if $k$ centralises $H$, then $k$ is centralised by $H$, an open subgroup of $G$, and so $k$ must be contained in $\Delta$. But $\Delta = Z(H)$ by \cite[Lemma 5.1(ii)]{woods-struct-of-G}

Hence $K$ is a central extension of two nilpotent, torsion-free compact $p$-adic analytic groups of finite rank, and so is such a group itself; hence $K$ is nilpotent $p$-valuable by \cite[Lemma 2.3]{woods-struct-of-G}, and so must be contained in $H$ by definition of $\fn(G)$.

Now suppose $H$ is not $p$-saturable, and fix a $p$-valuation on $H$. Conjugation by $k\in K$ induces the trivial automorphism on $H/H'$, so by \cite{lazard} it does also on $\Sat(H/H')$, which is naturally isomorphic to $\Sat H / (\Sat H)'$ by \cite[Lemma 3.2]{woods-struct-of-G}. This shows that $K \subseteq K_G(\Sat H)$. But now, writing $\mathfrak{h}$ for the Lie algebra associated to $\Sat H$, the same argument as above, \emph{mutatis mutandis}, shows that $K_G(\Sat H) = H$.
\end{proof}

Some properties.

\begin{lem}\label{lem: FN_p is preserved under central quotients}
Let $G$ be a compact $p$-adic analytic group with $\Delta^+ = 1$, and write $H = \fn(G)$. If $H$ is not abelian, then $H/Z = \fn(G/Z)$.
\end{lem}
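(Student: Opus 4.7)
The plan is to prove the two inclusions $H/Z\subseteq \fn(G/Z)$ and $\fn(G/Z)\subseteq H/Z$ separately.

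For the forward inclusion, I would first observe that, since $\Delta^+(G)=1$ forces $H=\fn(G)$ to be nilpotent $p$-valuable and $Z=Z(H)$ is a closed isolated normal subgroup of $H$ (as used repeatedly in the paper, e.g.\ via \cite[Lemma 8.4(a)]{ardakovInv}), the quotient $H/Z$ inherits a quotient $p$-valuation by \autoref{defn: quotient p-val} and is itself nilpotent $p$-valuable. It is normal in $G/Z$ because $H$ is normal in $G$. The defining maximality of $\fn(G/Z)$ (in the sense of \cite[Theorem C]{woods-struct-of-G}) then gives $H/Z\subseteq \fn(G/Z)$.

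For the reverse inclusion I would let $\pi\colon G\to G/Z$ and set $K=\pi^{-1}(\fn(G/Z))$, so $K$ is closed normal in $G$, contains $H$, and satisfies $[K:H]=[\fn(G/Z):H/Z]<\infty$ (both are open in $G/Z$). My aim is then to invoke \autoref{lem: subgroup of G acting trivially on H/H' is FN_p}: it suffices to verify $[H,k]\subseteq H'$ for every $k\in K$, since this gives $K\subseteq K_G(H)=H$ and hence $K=H$, so that $\fn(G/Z)=K/Z=H/Z$.

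To establish $[H,k]\subseteq H'$ I would mirror the Lie-theoretic strategy of \autoref{lem: subgroup of G acting trivially on H/H' is FN_p}. Fix $k\in K$; because $[K:H]<\infty$, some $k^n\in H$, so the automorphism $\mathrm{conj}(k)$ of $H$ has finite order modulo $\Inn(H)$, and therefore acts with finite order on the $\mathbb{Z}_p$-module $H/H'$. Using the non-abelian hypothesis on $H$, I would apply \autoref{cor: there exists a nice omega} with $L=H'$ (the isolated derived subgroup) to produce a $p$-valuation $\omega$ on $H$ satisfying $(\omegaprop{H'})$; combined with the fact that $\bar k\in\fn(G/Z)$ acts on the open normal nilpotent $p$-valuable subgroup $H/Z$ inside the finite-by-(nilpotent $p$-valuable) overgroup $\fn(G/Z)$, this lets me check hypothesis (i) of \autoref{thm: M is in Gamma_1} for $\sigma=\mathrm{conj}(k)$. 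That theorem then places the induced matrix $M_{\sigma}\in GL_d(\mathbb{Z}_p)$ of $k$'s action on $H/H'$ into the first congruence subgroup $1+pM_d(\mathbb{Z}_p)$, and \autoref{cor: if sigma increases f, then sigma is in Gamma_1} (for $p>2$) forces $M_\sigma=1$, i.e.\ $[H,k]\subseteq H'$.

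The main obstacle in executing the plan is verifying hypothesis (i) of \autoref{thm: M is in Gamma_1} for $\sigma=\mathrm{conj}(k)$, i.e.\ showing $\omega(\sigma(g_i)g_i^{-1})>\omega(g_i)$ on a suitable ordered basis. This requires translating the ``finite-by-nilpotent $p$-valuable'' structure of $\fn(G/Z)$ into actual valuation-increasing bounds in $H$, not merely virtual nilpotence, and is exactly where the non-abelian hypothesis is used: it ensures $H'\subsetneq H$, so $(\omegaprop{H'})$ is non-trivial and the rigidity provided by \autoref{thm: M is in Gamma_1} has something to act on.
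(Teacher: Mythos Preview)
Your forward inclusion is correct and matches the paper's. The reverse inclusion, however, has a real gap and is far more elaborate than necessary.

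The gap is the one you flag yourself: you never actually verify hypothesis~(i) of Theorem~\ref{thm: M is in Gamma_1}. Knowing that $\bar k \in \fn(G/Z)$ tells you how $k$ acts on $H/Z$, but hypothesis~(i) is a strict inequality $\omega(\sigma(g_i)g_i^{-1})>\omega(g_i)$ for a $p$-valuation $\omega$ on $H$ itself, and you give no mechanism to pass from the former to the latter. Vague appeal to the ``finite-by-(nilpotent $p$-valuable)'' structure of $\fn(G/Z)$ does not produce such a bound. In addition, your route through Corollary~\ref{cor: if sigma increases f, then sigma is in Gamma_1} imports the hypothesis $p>2$, which the lemma does not assume.

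The paper's argument bypasses all of the filtration and congruence-subgroup machinery. With $K=\pi^{-1}(\fn(G/Z))$ as you define it, one simply observes that $K$ is a central extension of $Z$ by $\fn(G/Z)$, both of which are nilpotent and torsion-free; hence $K$ itself is nilpotent and torsion-free, and therefore $p$-valuable by \cite[Lemma 2.3]{woods-struct-of-G}. Maximality of $H=\fn(G)$ then forces $K\le H$, giving $\fn(G/Z)=K/Z\le H/Z$. The entire reverse inclusion is three lines, with no restriction on $p$ and no need for Lemma~\ref{lem: subgroup of G acting trivially on H/H' is FN_p} or Theorem~\ref{thm: M is in Gamma_1}.
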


\begin{proof}
$H/Z$ is a nilpotent $p$-valuable open normal subgroup of $G/Z$, so must be contained within $\fn(G/Z)$. Conversely, the preimage in $G$ of $\fn(G/Z)$ is a central extension of $Z$ by $\fn(G/Z)$, two nilpotent and torsion-free groups, and hence is nilpotent and torsion-free, so must be $p$-valuable by \cite[Lemma 2.3]{woods-struct-of-G}, which shows that it must be contained within $H$.
\end{proof}

Recall that, if $J$ is a closed isolated subgroup of $H$, then there exists a unique smallest isolated orbital subgroup of $H$ containing $J$, which we call its \emph{isolator}, and denote $\mathrm{i}_H(J)$, as in \cite[Definition 1.6]{woods-struct-of-G}.

The (closed, isolated orbital, characteristic) subgroup $\mathrm{i}_H(H'Z)$ of $H = \fn(G)$ will be crucial throughout this section, so we record some results.

\begin{lem}\label{lem: H'Z is not all of H}
Let $H$ be a nilpotent $p$-valuable group. If $H$ is not abelian, then $H \neq \mathrm{i}_H(H'Z)$.
\end{lem}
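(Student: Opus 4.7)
The plan is to reduce to the quotient $\bar{H} := H/Z$ and show that a nontrivial nilpotent $p$-valuable group cannot equal its own isolated derived subgroup. Suppose for contradiction that $H = \mathrm{i}_H(H'Z)$ while $H$ is non-abelian. The centre $Z$ is closed and isolated normal in $H$ (cf.\ \cite[Lemma 8.4(a)]{ardakovInv}), so $\bar{H}$ is again nilpotent and $p$-valuable, and is nontrivial because $H$ is non-abelian. Write $\bar{H}'$ for the isolated derived subgroup of $\bar{H}$.

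The first step is to observe that the image of $H'$ in $\bar{H}$ is contained in $\bar{H}'$: any $h \in H'$ satisfies $h^{p^n} \in [H,H]$ for some $n$, so $\bar{h}^{p^n} \in [\bar{H},\bar{H}] \subseteq \bar{H}'$, and isolation of $\bar{H}'$ in $\bar{H}$ forces $\bar{h} \in \bar{H}'$. Combining this with the standing hypothesis, for any $h \in H$ we have $h^{p^n} \in H'Z$ for some $n$, so $\bar{h}^{p^n}$ lies in the image of $H'Z$ in $\bar{H}$, which equals the image of $H'$ and hence sits inside $\bar{H}'$. Using isolation once more, $\bar{h} \in \bar{H}'$. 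Thus $\bar{H} = \bar{H}'$.

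The final step is to deduce that this forces $\bar{H} = 1$, yielding the desired contradiction. The natural route is Lazard's correspondence: $\Sat \bar{H}$ is $p$-saturable of finite rank, with associated finite-rank free nilpotent $\mathbb{Z}_p$-Lie algebra $L = \Log(\Sat \bar{H})$, and the condition $\bar{H} = \bar{H}'$ translates (via the dictionary between $\Sat$ and the Lie algebra, applied to commutator subgroups and isolators) to $L$ being the isolator of $[L,L]$, i.e.\ $L/[L,L]$ is $\mathbb{Z}_p$-torsion. Tensoring with $\mathbb{Q}_p$ gives a finite-dimensional nilpotent $\mathbb{Q}_p$-Lie algebra $L \otimes \mathbb{Q}_p$ that equals its own derived subalgebra; but a nonzero nilpotent (hence solvable) Lie algebra has a derived series terminating at $0$ and so cannot coincide with its own derived subalgebra, forcing $L \otimes \mathbb{Q}_p = 0$, hence $L = 0$, hence $\bar{H} = 1$.

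The main obstacle is this last paragraph: one must carefully verify that passing to $\Sat$, to $L = \Log(\Sat \bar{H})$, and then to $L \otimes \mathbb{Q}_p$ respects the centre, the commutator subgroup, and the isolator in the ways used. All of these are standard consequences of the functorial behaviour of Lazard's correspondence recorded in \cite{lazard} and exploited throughout \cite{woods-struct-of-G}, so no new techniques are needed beyond careful bookkeeping.
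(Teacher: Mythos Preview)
Your argument is correct. The route differs from the paper's, so a brief comparison is in order.

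The paper stays at the Lie-algebra level throughout: in the $p$-saturated case it writes $\mathfrak{h}$ and $\mathfrak{z}$ for the Lie algebras of $H$ and $Z$, observes that $\mathfrak{h} = \mathfrak{h}_2 + \mathfrak{z}$ would give $\mathfrak{h}_2 = [\mathfrak{h},\mathfrak{h}_2+\mathfrak{z}] = [\mathfrak{h},\mathfrak{h}_2] = \mathfrak{h}_3$ (the centre contributing nothing to the bracket), and then nilpotency forces $\mathfrak{h}_2 = 0$. The non-saturated case is handled by a rank comparison via $\Sat$. So the paper exploits the centre \emph{inside} the Lie algebra, rather than killing it first.

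Your approach instead quotients out $Z$ at the group level and reduces to the clean auxiliary statement ``a nontrivial nilpotent $p$-valuable group cannot equal its own isolated derived subgroup''. This is a perfectly good reduction, and the endpoint is arguably a more portable fact. The cost is that you still have to invoke Lazard (or the $\mathbb{Q}_p$-Lie algebra of $\bar H$) to finish, together with the compatibility checks you flag in your last paragraph; the paper's one-line bracket computation sidesteps those. Two small points: you should write $\overline{[H,H]}$ rather than $[H,H]$ when asserting $h^{p^n}\in[H,H]$, and the ``$L/[L,L]$ is $\mathbb{Z}_p$-torsion'' step is cleanest if phrased as ``$\bar H/\overline{[\bar H,\bar H]}$ is finite, hence its $\mathbb{Q}_p$-Lie algebra vanishes, hence $\mathfrak{g}=[\mathfrak{g},\mathfrak{g}]$'', which avoids tracking exactly how $\Log$ interacts with commutator subgroups.
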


\begin{proof}
Suppose first that $H$ is $p$-saturated, and write $\mathfrak{h}$ and $\mathfrak{z}$ for the Lie algebras of $H$ and $Z$ respectively under Lazard's correspondence \cite{lazard}. If $\mathfrak{h} = \mathfrak{h}_2\mathfrak{z}$ (writing $\mathfrak{h}_2$ for the second term in the lower central series of $\mathfrak{h}$), then by applying $[\mathfrak{h},-]$ to both sides, we see that $\mathfrak{h}_2 = \mathfrak{h}_3$. But as $\mathfrak{h}$ is nilpotent, this implies that $\mathfrak{h}_2 = 0$, so that $\mathfrak{h}$ is abelian, a contradiction.

When $H$ is not $p$-saturated: note that $\mathrm{i}_H(H'Z) = \Sat\!(H'Z)\cap H$, by \cite[Lemma 3.1]{woods-struct-of-G}, and so that $\Sat\!(H/\mathrm{i}_H(H'Z)) \cong \Sat\!(H)/\Sat\!(H'Z)$ by Lemma \cite[Lemma 3.2]{woods-struct-of-G}. Hence $H/\mathrm{i}_H(H'Z)$ has the same (in particular non-zero) rank as $\Sat\!(H)/\Sat\!(H'Z)$.
\end{proof}

\begin{lem}\label{lem: subgroup of G acting trivially on H/H'Z is FN_p}
Let $G$ be a nilpotent-by-finite compact $p$-adic analytic group with $\Delta^+ = 1$. Let $H = \fn(G)$, and assume that $H$ is not abelian. Write $$M := M_G(H) = \{x\in G \;|\; [H,x]\leq \mathrm{i}_H(H'Z)\},$$ where $H'$ denotes the isolated derived subgroup of $H$, and $Z$ the centre of $H$. Then $M = H$.
\end{lem}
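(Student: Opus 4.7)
The plan is to reduce to the previous lemma by passing to the central quotient $\bar G := G/Z$, following exactly the template that made Lemma \ref{lem: FN_p is preserved under central quotients} useful. Containment $H \subseteq M$ is free, since $[H,H] \leq H' \leq \mathrm{i}_H(H'Z)$. For the converse, the idea is: if $m \in M$ then $[H,m] \leq \mathrm{i}_H(H'Z)$ descends to $[\bar H, \bar m] \leq \mathrm{i}_H(H'Z)/Z$, and I want to identify this latter group with the isolated derived subgroup $(\bar H)'$, so that Lemma \ref{lem: subgroup of G acting trivially on H/H' is FN_p} applied to $\bar G$ forces $\bar m \in \bar H$, whence $m \in HZ = H$.

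To execute this I need to check three things. First, that the hypotheses of Lemma \ref{lem: subgroup of G acting trivially on H/H' is FN_p} transfer to $\bar G$: this is nilpotent-by-finite, and Lemma \ref{lem: FN_p is preserved under central quotients} gives $\fn(\bar G) = \bar H$; meanwhile $\Delta^+(\bar G)$ is a finite normal subgroup of $\bar G$, so is contained in $\fn(\bar G) = \bar H$, but $\bar H = H/Z$ is torsion-free because $Z$ is closed isolated in $H$ (using \cite[Lemma 8.4(a)]{ardakovInv}), hence $\Delta^+(\bar G) = 1$. Second, that
\[
(\bar H)' = \mathrm{i}_{\bar H}\bigl(\gamma_2(\bar H)\bigr) = \mathrm{i}_{\bar H}\bigl(\gamma_2(H)Z/Z\bigr) = \mathrm{i}_H(H'Z)/Z,
\]
where the last equality combines the standard fact that isolators pass through quotients by isolated normal subgroups (\cite[Lemmas 3.1 and 3.2]{woods-struct-of-G}) with the observation that $\gamma_2(H) \leq H'$ with torsion quotient, so $\mathrm{i}_H(\gamma_2(H)Z) = \mathrm{i}_H(H'Z)$. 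Third, that the descended condition $[\bar H, \bar m] \leq (\bar H)'$ indeed places $\bar m$ in $K_{\bar G}(\bar H)$, which is simply the definition.

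Putting these together, Lemma \ref{lem: subgroup of G acting trivially on H/H' is FN_p} applied to $\bar G$ gives $K_{\bar G}(\bar H) = \bar H$, so $\bar m \in \bar H$, i.e.\ $m \in HZ = H$, which gives $M \subseteq H$ and hence $M = H$.

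The main thing to verify carefully is the bookkeeping around the isolator operation, specifically the identification $\mathrm{i}_H(H'Z)/Z = (\bar H)'$ and the verification that $\Delta^+(\bar G) = 1$; the remainder is a direct application of machinery already established. The hypothesis that $H$ is non-abelian is not strictly used by this argument (beyond enabling Lemma \ref{lem: FN_p is preserved under central quotients}), but it matches the way the lemma will be invoked in later sections alongside Lemma \ref{lem: H'Z is not all of H}.
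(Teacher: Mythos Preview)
Your proposal is correct and follows essentially the same route as the paper: pass to $\bar G = G/Z$, identify $\mathrm{i}_H(H'Z)/Z$ with $(\bar H)'$, and then invoke Lemma \ref{lem: subgroup of G acting trivially on H/H' is FN_p} for $\bar G$ (using Lemma \ref{lem: FN_p is preserved under central quotients} to get $\fn(\bar G)=\bar H$). If anything you are more scrupulous than the paper, which applies Lemma \ref{lem: subgroup of G acting trivially on H/H' is FN_p} to $\bar G$ without pausing to note that $\Delta^+(\bar G)=1$; your torsion-freeness argument for $\bar H$ fills that small gap.
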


\begin{proof}
Clearly $Z\leq M$. We will calculate $M/Z$.

First, note that $\mathrm{i}_H(H'Z)/Z$ is an isolated normal subgroup of $H/Z$, as the quotient is isomorphic to $H/\mathrm{i}_H(H'Z)$, which is torsion-free. Also, as $\mathrm{i}_H(H'Z)$ contains $H'Z$ and hence $\overline{[H,H]}Z$ as an open subgroup, clearly $\mathrm{i}_H(H'Z)/Z$ contains $\overline{[H,H]}Z/Z$ as an open subgroup, so that $\mathrm{i}_H(H'Z)/Z \leq \mathrm{i}_{H/Z}(\overline{[H,H]}Z/Z)$.

Now, $[H/Z, H/Z] = [H,H]Z/Z$ as abstract groups, so by taking their closures followed by their $(H/Z)$-isolators, we see that
$$(H/Z)' = \mathrm{i}_{H/Z}\big(\overline{[H,H]Z/Z}\big) = \mathrm{i}_{H/Z}\big(\overline{[H,H]}Z/Z\big),$$
so that 
$$\mathrm{i}_H(H'Z)/Z = (H/Z)'.$$
But $x\in M$ if and only if $[H,x]\leq \mathrm{i}_H(H'Z)$, which is equivalent to $[H/Z, xZ]\leq (H/Z)'$, or in other words $xZ\in K_{G/Z}(H/Z) = H/Z$ by Lemma \ref{lem: subgroup of G acting trivially on H/H' is FN_p}. So $M/Z = H/Z$, and hence $M = H$.
\end{proof}

\subsection{The extension theorem}

\begin{propn}\label{propn: main prime extension theorem, Delta+ = 1}
Fix a prime $p>2$ and a finite field $k$ of characteristic $p$. Let $G$ be a nilpotent-by-finite compact $p$-adic analytic group with $\Delta^+ = 1$. Suppose $H = \fn(G)$, and write $F = G/H$. Let $P$ be a $G$-stable, faithful prime ideal of $kH$. Let $(kG)_\alpha$ be a central 2-cocycle twist of $kG$ with respect to a standard (Definition \ref{defn: recall standard cp}) decomposition $$kG = kH\cp{\sigma}{\tau} F,$$ for some $\alpha\in Z^2_\sigma(F, Z((kH)^\times))$, as in \cite[Theorem 5.12]{woods-prime-quotients}. Then $P(kG)_\alpha$ is a prime ideal of $(kG)_\alpha$.
\end{propn}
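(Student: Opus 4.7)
The strategy is to apply Lemma \ref{lem: X-inner stuff}(ii) to the crossed product
$$(kG)_\alpha/P(kG)_\alpha \cong R \cp{\sigma}{\tau\alpha} F, \qquad R := kH/P,$$
which is prime as $P$ is. It suffices to show that $F_\inn := \Xinn{R*F}{R}{F}$ is trivial; for then, applying the lemma with subgroup $\{1\}$ (which then contains $F_\inn$), primeness of $R$ propagates to primeness of $R*F$, which is exactly what we want.

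So fix $g \in F_\inn$ and write $\sigma_g \in \Aut(R)$ for conjugation by a lift $\overline{g}$; the goal is to show $g = 1$ in $F$. If $H$ is abelian, then $R$ is a commutative domain, and setting $x = 1$ in the defining X-inner relation $axb = cx^{\sigma_g}d$ yields $ab = cd \neq 0$; cancellation in $R$ then forces $\sigma_g = \id$. Hence $g \in C_G(H)$, which is contained in $\Delta(G) = Z(H) = H$ (by the identification noted in the opening notation of Section \ref{section: graded ring}, since $H$ is open), so $g = 1$ in $F$, as required.

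Assume now that $H$ is non-abelian. By Lemma \ref{lem: H'Z is not all of H}, $L := \mathrm{i}_H(H'Z)$ is a proper closed isolated characteristic subgroup of $H$ containing both $H'$ and $Z$, so by Corollary \ref{cor: there exists an F-stable omega satisfying omegaprop} we may fix an $F$-stable $p$-valuation $\omega$ on $H$ satisfying (\omegaprop{L}); this sets up the valuations $v_1,\ldots,v_s$ on $Q'$ and the filtration $f$ on $Q$ of Section \ref{section: graded ring}. Lemma \ref{lem: X-inner stuff}(i) forces $\sigma_g|_{Z(R)} = \id$, so $\sigma_g$ extends to $Q$ with $\sigma_g|_{Q'} = \id$; in particular $\sigma_g$ fixes each $v_i$, hence (by Lemma \ref{lem: orbit of valuations}) each $f_i$, and so preserves the filtration $f$.

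The crux of the argument is then to show that $\sigma_g$ acts trivially on the associated graded ring $\gr_f Q$; in particular that $f(\sigma_g(b_i) - b_i) > f(b_i)$ for each $1 \leq i \leq l$, so that $\sigma_g$ fixes the principal symbols $Y_1,\ldots,Y_l$. Morally this ought to follow from the fact that $\gr_f Q \cong \bigoplus_{i=1}^s (\gr_{v_i} Q')[Y_1,\ldots,Y_e]$ (Lemma \ref{lem: isomorphisms of grs}) is \emph{commutative}, together with the X-inner relation $axb = cx^{\sigma_g}d$, whose ``graded shadow'' in any commutative ring degenerates to triviality of $\sigma_g$; but carrying this out rigorously --- tracking $f$-values carefully so that no cancellation of principal symbols occurs, and exploiting the finite order of the induced automorphism on $H/L$ (guaranteed since $F$ is finite) --- is the principal obstacle and the ``long but elementary'' argument promised in the introduction. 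Once this is established, Corollary \ref{cor: if sigma increases f, then sigma is in Gamma_1} delivers that $\sigma_g$ induces the identity on $H/L$, i.e., $[H,g] \leq L$, so $g \in M_G(H)$; and Lemma \ref{lem: subgroup of G acting trivially on H/H'Z is FN_p} concludes $g \in H$, so $g = 1$ in $F$, completing the proof.
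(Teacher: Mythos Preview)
Your overall trajectory is right --- reduce to showing the relevant X-inner group is trivial, deduce via the filtration $f$ and property (\omegaprop{L}) that any offending $g$ acts trivially on $H/L$, then finish with Lemma \ref{lem: subgroup of G acting trivially on H/H'Z is FN_p}. But there is a genuine gap at precisely the step you flag as the ``crux'': you need that an X-inner automorphism $\sigma_g$ of the \emph{noncommutative} ring $R = kH/P$ acts trivially on $\gr_f Q$, and you do not prove it. Pushing the relation $axb = cx^{\sigma_g}d$ through the associated-graded functor is obstructed by exactly the cancellation of leading terms you acknowledge, and there is no evident mechanism to control this; the ``long but elementary argument about X-inner automorphisms'' in the introduction does \emph{not} refer to any such computation.

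The paper avoids this entirely by reversing the order of operations. Rather than computing $\Xinn{R*F}{R}{F}$ for $R = kH/P$ and then trying to pass to $\gr_f$, it first runs a chain of primeness-preserving reductions
\[
kH/P * F \;\Leftarrow\; Q*F \;\Leftarrow\; \gr_f(Q*F) \cong \Big(\textstyle\bigoplus_{i=1}^s \gr_{f_i}Q\Big)*F \;\Leftarrow\; (\gr_{f_1}Q)*F'
\]
(via \cite[2.1.16(vii)]{MR}, \cite[II.3.2.7]{LVO}, and \cite[Corollary 14.8]{passmanICP}, with $F' = \mathrm{Stab}_F(f_1)$), and only \emph{then} analyses $F'_\inn$ for the crossed product $(\gr_{f_1}Q)*F'$. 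At that stage the base ring $\gr_{f_1}Q \cong (\gr_{v_1}Q')[Y_1,\dots,Y_m]$ is a \emph{commutative domain}, so any X-inner automorphism acts trivially on it --- in particular $\gr\,\sigma(g)(Y_i)=Y_i$, i.e.\ $f(\sigma(g)(b_i)-b_i)>f(b_i)$, with no symbol-tracking needed. Corollary \ref{cor: if sigma increases f, then sigma is in Gamma_1} and Lemma \ref{lem: subgroup of G acting trivially on H/H'Z is FN_p} then finish exactly as you sketched. (The paper also splits off a preliminary case, reducing via Lemma \ref{lem: X-inner stuff}(i) to the situation where $F$ centralises $Z$; this is minor, and your handling of the abelian case, though different from the paper's group-theoretic argument, is fine.)
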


\begin{proof}
First, we note that the claim that $P(kG)_\alpha$ is a prime ideal of $(kG)_\alpha$ is equivalent to the claim that $$(kG)_\alpha/P(kG)_\alpha = kH/P\cp{\sigma}{\tau\alpha} F$$ is a prime ring.

\textbf{Case 1.} Suppose that $G$ centralises $Z$.

If $H$ is abelian, so that $H = Z$, then every $g\in G$ is centralised by $Z$, an open subgroup of $G$. Hence $g\in \Delta$, i.e. $G = \Delta$. But, by \cite[Theorem D]{woods-struct-of-G}, $\Delta\leq H$, and so we have $G = H$ and there is nothing to prove.

So suppose henceforth that $Z\lneq H$, and write $L := \mathrm{i}_H(H'Z)$, so that, by Lemma \ref{lem: H'Z is not all of H}, we have $L \lneq H$. As the decomposition of $kG$ is standard, we may view $F$ as a subset of $G$.

The idea behind the proof is as follows. We will construct a crossed product $R*F'$, where $R$ is a certain commutative domain and $F'$ is a certain subgroup of $F$, with the following property: if $R*F'$ is a prime ring, then $P(kG)_\alpha$ is a prime ideal. Then, by using the well-understood structure of $R$, we will show that the action of $F'$ on $R$ is X-outer (in the sense of Definition \ref{defn: X-inner stuff}), so that $R*F'$ is a prime ring.

By Corollary \ref{cor: there exists an F-stable omega satisfying omegaprop}, we can see that $H$ admits an $F$-stable $p$-valuation $\omega$ satisfying (\omegaprop{L}). Hence, in the notation of \S \ref{subsection: ring filtrations, w, f}, we may define the filtration $w$ from $\omega$ as in Definition \ref{defn: valuation w}. Furthermore, we write $$Q' = \mathbf{Q}(kZ/P\cap kZ),\qquad Q = Q'\tensor{kZ} kN,$$ as in \S \ref{subsection: Q', Q, filtrations}; and we endow $Q$ with the $F$-orbit of filtrations $f_i$ ($1\leq i\leq s$) and the filtration $f$ of Definitions \ref{defn: filtrations f_i} and \ref{defn: filtration f}, defined in terms of the filtration $w$ above.

By \cite[2.1.16(vii)]{MR}, in order to show that the crossed product
\begin{align}\label{eqn: kH/P*F}
kH/P\cp{\sigma}{\tau\alpha} F
\end{align}
is a prime ring, it suffices to show that the related crossed product
\begin{align}\label{eqn: Q*F}
Q\cp{\sigma}{\tau\alpha}F
\end{align}
is prime, where this crossed product is defined in \S \ref{subsection: Q', Q, filtrations}. Then, by \cite[II.3.2.7]{LVO}, it suffices to show that
\begin{align}\label{eqn: gr_f(Q*F)}
\gr_f(Q*F)
\end{align}
is prime. Details of this graded ring are given in Lemma \ref{lem: isomorphisms of grs}: in particular, note that
$$\gr_f(Q*F) \cong \left(\bigoplus_{i=1}^s \gr_{f_i} Q\right)*F.$$

Now, as noted in Definition \ref{defn: filtrations f_i}, each $\gr_{f_i} Q$ is a commutative domain, and by construction, $F$ permutes the summands $\gr_{f_i} Q$ transitively. So by \cite[Corollary 14.8]{passmanICP} it suffices to show that
\begin{align}\label{eqn: gr_(f_1)(Q)*F'}
\gr_{f_1} Q * F'
\end{align}
is prime, where $F' = \mathrm{Stab}_F(f_1)$.

\begin{notn}
We set up notation in order to be able to apply the results of \S \ref{subsection: automs trivial on N/L}. Let $\{y_{m+1}, \dots, y_n\}$ be an ordered basis for $Z$, which we extend to an ordered basis $\{y_{l+1}, \dots, y_n\}$ for $L$, which we extend to an ordered basis $\{y_1, \dots, y_n\}$ for $H$. Set $b_i = y_i - 1\in kH/P$, and let $Y_i = \gr_{f_1}(b_i)$ for all $1\leq i\leq m$. Then
$$\gr_{f_1} Q \cong \left( \gr_{v_1} Q' \right) [Y_1, \dots, Y_m].$$
\end{notn}

The ring on the right-hand side inherits a crossed product structure
\begin{align}\label{eqn: gr(Q')[Y]*F}
\left( \gr_{v_1} Q' \right) [Y_1, \dots, Y_m] * F'.
\end{align}
from (\ref{eqn: gr_(f_1)(Q)*F'}). Writing $R := \left( \gr_{v_1} Q' \right) [Y_1, \dots, Y_m]$, we have now shown, by passing along the chain
\begin{center}
(\ref{eqn: gr(Q')[Y]*F}) $\rightarrow$ (\ref{eqn: gr_(f_1)(Q)*F'}) $\rightarrow$ (\ref{eqn: gr_f(Q*F)}) $\rightarrow$ (\ref{eqn: Q*F}) $\rightarrow$ (\ref{eqn: kH/P*F}),
\end{center}
that we need only show that $R*F'$ is prime.

Write $F'_\inn$ for the subgroup of $F'$ acting on $R$ by X-inner automorphisms in the crossed product (\ref{eqn: gr(Q')[Y]*F}), i.e.
$$F'_\inn = \Xinn{R*F'}{R}{F'}$$
in the notation of Definition \ref{defn: X-inner stuff}. By the obvious abuse of notation, we will denote this action as the map of sets $\gr\;\sigma: F'\to \Aut(R)$.

Take some $g\in F'$. If $\gr\;\sigma(g)$ acts non-trivially on $R$, then as $R$ is commutative, we have $g\not\in F'_\inn$. Hence, as by Lemma \ref{lem: X-inner stuff}(ii) we need only show that $R* F'_\inn$ is prime, we may restrict our attention to those $g\in F'$ that act trivially on $R$. In particular, such a $g\in F'$ must centralise each $Y_i$. But
$$\gr\;\sigma(g)(Y_i) = Y_i \Leftrightarrow f(\sigma(g)(b_i) - b_i) > f(b_i).$$
Now we see (as $p>2$) from Corollary \ref{cor: if sigma increases f, then sigma is in Gamma_1} that $\sigma(g)$ induces the identity automorphism on $H/L$, and hence from Lemma \ref{lem: subgroup of G acting trivially on H/H'Z is FN_p} that $g\in H$. That is, $F'_\inn$ is the trivial group, so that $R*F'_\inn = R$ is automatically prime.

\textbf{Case 2.} Suppose some $x\in F$ does not centralise $Z$. Write $F_\inn$ for the subgroup of $F$ acting by X-inner automorphisms on $kH/P$ in the crossed product (\ref{eqn: kH/P*F}), i.e.
$$F_\inn := \Xinn{(kG)_\alpha / P(kG)_\alpha}{kH/P}{F}.$$
Then, by Lemma \ref{lem: X-inner stuff}(i), $x\not\in F_\inn$; so $F_\inn$ is contained in $\mathbf{C}_F(Z)$, and we need only prove that the sub-crossed product $(kH/P)*\mathbf{C}_F(Z)$ is prime by Lemma \ref{lem: X-inner stuff}(ii). This reduces the problem to Case 1.
\end{proof}

\begin{propn}\label{propn: main prime extension theorem, Delta+ not equal to 1}
Let $G$ be a nilpotent-by-finite compact $p$-adic analytic group, and $k$ a finite field of characteristic $p>2$. Let $H = \fn(G)$, and write $F = G/H$. Let $P$ be a $G$-stable, almost faithful prime ideal of $kH$.
%Write $C$ for the preimage in $G$ of the subgroup $$C/\Delta^+ = \mathbf{C}_{G/\Delta^+}(\Delta/\Delta^+).$$ Then (i) $PkC$ is prime, (ii) $PkG$ is prime.
Then $PkG$ is prime.
\end{propn}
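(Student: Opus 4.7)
The plan is to reduce to the case already handled in Proposition \ref{propn: main prime extension theorem, Delta+ = 1} by passing to the quotient $\overline{G} := G/\Delta^+$ (with $\Delta^+ = \Delta^+(G)$) and invoking the untwisting machinery of \cite[Theorems B and C]{woods-prime-quotients}. First I would record the structural preliminaries: $\Delta^+$ is a finite normal subgroup contained in $H$, and $\overline{G}$ is again a nilpotent-by-finite compact $p$-adic analytic group, now satisfying $\Delta^+(\overline{G}) = 1$ and $\fn(\overline{G}) = \overline{H} := H/\Delta^+$ (as asserted in the introduction). In particular $\overline{G}/\overline{H} \cong F$.

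Next I would apply \cite[Theorem B]{woods-prime-quotients} to the almost faithful $G$-stable prime ideal $P$ of $kH$. This produces a finite field extension $k'/k$, a faithful $\overline{G}$-stable prime ideal $P'$ of $k'\overline{H}$, and a central 2-cocycle $\alpha\in Z^2_\sigma(F, Z((k'\overline{H}/P')^\times))$ arising from a standard crossed product decomposition, together with an identification of $kH/P$ (as a ring carrying the conjugation action of $G$) in terms of $k'\overline{H}/P'$. I would then apply \cite[Theorem C]{woods-prime-quotients} to lift this identification to the ambient ring, obtaining an isomorphism
$$kG/PkG \;\cong\; (k'\overline{G})_\alpha / P'(k'\overline{G})_\alpha,$$
where $(k'\overline{G})_\alpha$ is the central 2-cocycle twist of $k'\overline{G}$ with respect to a standard decomposition $k'\overline{G} = k'\overline{H}\cp{\sigma}{\tau} F$.

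Finally, I would invoke Proposition \ref{propn: main prime extension theorem, Delta+ = 1} applied to the data $(\overline{G}, \overline{H}, k', P', \alpha)$, which is legitimate precisely because $\Delta^+(\overline{G}) = 1$, $\overline{H} = \fn(\overline{G})$, $k'$ is a finite field of characteristic $p>2$, and $P'$ is a faithful $\overline{G}$-stable prime ideal of $k'\overline{H}$. That proposition concludes that $P'(k'\overline{G})_\alpha$ is a prime ideal of $(k'\overline{G})_\alpha$; via the isomorphism displayed above, this gives that $kG/PkG$ is prime, i.e.\ $PkG$ is prime.

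The main obstacle is essentially bookkeeping rather than new mathematics: one must unpack \cite[Theorems B and C]{woods-prime-quotients} carefully enough to verify that $G$-stability of $P$ translates to $\overline{G}$-stability of $P'$, that ``almost faithful'' becomes genuinely faithful upon passage to $k'\overline{H}$, that the field extension $k'/k$ is finite, and that the 2-cocycle $\alpha$ produced lies in $Z^2_\sigma(F, Z((k'\overline{H}/P')^\times))$ as required for the hypothesis of Proposition \ref{propn: main prime extension theorem, Delta+ = 1}.
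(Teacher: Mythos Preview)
Your overall strategy matches the paper's: reduce to Proposition \ref{propn: main prime extension theorem, Delta+ = 1} via the untwisting results of \cite{woods-prime-quotients}. However, the displayed isomorphism
\[
kG/PkG \;\cong\; (k'\overline{G})_\alpha / P'(k'\overline{G})_\alpha
\]
is cleaner than what those results actually deliver, and this is where the paper does real work that you have suppressed. The untwisting machinery does not directly compare $kG$ with a twist of $k'[[G/\Delta^+]]$; rather, one picks a central primitive idempotent $e\in\cpi(P)$, passes to the \emph{stabilisers} $H_1=\mathrm{Stab}_H(e)$ and $G_1=\mathrm{Stab}_G(e)$, and obtains via the Matrix Units Lemma and \cite[Theorem 5.12]{woods-prime-quotients} isomorphisms of the shape
\[
f\cdot\overline{kG}\;\cong\;M_s(e\cdot\overline{kG_1}),\qquad e\cdot\overline{kG_1}\;\cong\;M_t\big((k'[[G_1/\Delta^+]])_\alpha\big),
\]
so the comparison is only up to Morita equivalence, and the group appearing is $G_1/\Delta^+$, not $G/\Delta^+$.

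More substantively, the paper contains a step you have not accounted for: one must check that the prime $P_1$ of $kH_1$ arising at the stabiliser level still has $P_1^\dagger\leq\Delta^+$. This is not formal bookkeeping; the paper argues it by observing that $q(P_1^\dagger)$ (where $q:G\to G/\Delta^+$) is a normal subgroup of the nilpotent group $q(H_1)$ whose intersection with the centre $q(\Delta)$ is trivial, and hence is itself trivial by \cite[5.2.1]{rob}. Your phrase ``almost faithful becomes genuinely faithful upon passage to $k'\overline{H}$'' hides this argument. So while your plan is correct in outline, the claimed isomorphism should be replaced by the Morita-equivalence chain through $G_1$, and the nilpotence argument for faithfulness of $P_1$ needs to be supplied.
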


\begin{proof}
We assume familiarity with \cite[Lemma 1.1]{woods-prime-quotients}, and adopt the notation of \cite[Notation 1.2]{woods-prime-quotients} for this proof.

Let $e\in \cpi(P)$, and write $f_H = \cpisum{e}{H}$, $f = \cpisum{e}{G}$. Then $PkG$ is a prime ideal of $kG$ if and only if $f\cdot\overline{PkG}$ is prime in $f\cdot\overline{kG}$.

Write $H_1 = \mathrm{Stab}_H(e)$ and $G_1 = \mathrm{Stab}_G(e)$. Then, by the Matrix Units Lemma \cite[Lemma 6.1]{woods-prime-quotients}, we get an isomorphism
$$f\cdot\overline{kG}\cong M_s(e\cdot\overline{kG_1})$$
for some $s$, under which the ideal $f\cdot\overline{PkG}$ is mapped to $M_s(e\cdot\overline{P_1kG_1})$, where $P_1$ is the preimage in $kH_1$ of $e\cdot\overline{P}\cdot e$. It is easy to see that $P_1$ is prime in $kH_1$; indeed, applying the Matrix Units Lemma to $kH$, we get
$$f_H\cdot\overline{kH} \cong M_{s'} (e\cdot\overline{kH_1}),$$
under which $f_H\cdot\overline{P} \mapsto M_{s'} (e\cdot\overline{P_1})$, so that $P_1$ is prime by Morita equivalence (see e.g. \cite[Lemma 1.7]{woods-prime-quotients}). We also know from \cite[Lemma 6.6]{woods-prime-quotients} (or the remark after\cite[Lemma 6.2]{woods-prime-quotients}) that
$$P^\dagger = \bigcap_{h\in H} (P_1^\dagger)^h.$$
Now, writing $q$ to denote the natural map $G \to G/\Delta^+$,
$$q\left(\left(P_1^\dagger \cap \Delta\right)^{h}\right) = q\left(P_1^\dagger \cap \Delta\right)$$
for all $h\in H$, as $q(\Delta) = Z(q(H))$ by definition of $H$ (see \cite[Lemma 5.1(ii)]{woods-struct-of-G}); and so
$$q\left(P^\dagger \cap \Delta\right) = q\left(P_1^\dagger \cap \Delta\right) = q(1).$$
But $q\left(P_1^\dagger\right)$ is a normal subgroup of the nilpotent group $q\left(H_1\right)$. Hence, as the intersection of $q\left(P_1^\dagger\right)$ with the centre $q(\Delta)$ of $q(H)$ is trivial, we must have that $q\left(P_1^\dagger\right)$ is trivial also \cite[5.2.1]{rob}. That is, $P_1^\dagger \leq \Delta^+(H_1) = \Delta^+$.

Now, in order to show that $M_s(e\cdot\overline{P_1 kG_1})$ is prime, we may equivalently (by Morita equivalence) show that $e\cdot\overline{P_1 kG_1}$ is prime. By \cite[Theorem 5.12]{woods-prime-quotients}, we get an isomorphism
$$e\cdot\overline{kG_1} \cong M_t\big((k'[[G_1/\Delta^+]])_\alpha\big),$$
for some integer $t$, some finite field extension $k'/k$, and a central 2-cocycle twist (see above or \cite[Definition 5.4]{woods-prime-quotients}) of $k'[[G_1/\Delta^+]]$ with respect to a standard crossed product decomposition
$$k'[[G_1/\Delta^+]] = k'[[H_1/\Delta^+]]\cp{\sigma}{\tau} (G_1/H_1)$$
given by some
$$\alpha\in Z^2_\sigma\left(G_1/H_1, Z\left(\left(k'[[H_1/\Delta^+]]\right)^\times\right)\right).$$ Writing the image of $e\cdot\overline{P_1}$ as $M_t(\mathfrak{p})$ for some ideal $\mathfrak{p}\in k'[[H_1/\Delta^+]]$, we see by (see above or \cite[Theorem C]{woods-prime-quotients}) that $\mathfrak{p}$ is a faithful, $(G_1/\Delta^+)$-stable prime ideal of $k'[[H_1/\Delta^+]]$. It now remains only to show that the extension of $\mathfrak{p}$ to $k'[[G_1/\Delta^+]]$ is prime; but this now follows from Proposition \ref{propn: main prime extension theorem, Delta+ = 1}.
\end{proof}

\textit{Proof of Theorem A.} This follows from Proposition \ref{propn: main prime extension theorem, Delta+ not equal to 1}.\qed

\bibliography{biblio}
\bibliographystyle{plain}

\end{document}